\UseRawInputEncoding
\documentclass[12pt, reqno]{amsart}
\usepackage[margin=1in]{geometry}
\usepackage{amssymb,latexsym,amsmath,amscd,amsfonts}
\usepackage{latexsym}
\usepackage[mathscr]{eucal}
\usepackage{bm}
\usepackage{mathptmx}
\usepackage{amssymb}
\usepackage{amsthm}
\usepackage{dcolumn}
\usepackage[all]{xy}
\usepackage{enumitem}
\usepackage[utf8]{inputenc}

\def \qed {\hfill \vrule height6pt width 6pt depth 0pt}
\def\textmatrix#1&#2\\#3&#4\\{\bigl({#1 \atop #3}\ {#2 \atop #4}\bigr)}
\def\dispmatrix#1&#2\\#3&#4\\{\left({#1 \atop #3}\ {#2 \atop #4}\right)}
\newcommand{\beg}{\begin{equation}}
	\newcommand{\eeg}{\end{equation}}
\newcommand{\ben}{\begin{eqnarray*}}
	\newcommand{\een}{\end{eqnarray*}}

\newtheorem{thm}{Theorem}[section]
\newtheorem{cor}[thm]{Corollary}
\newtheorem{lem}[thm]{Lemma}

\newtheorem{prop}[thm]{Proposition}
\numberwithin{equation}{section} \theoremstyle{definition}
\newtheorem{defn}[thm]{Definition}
\newtheorem{rem}[thm]{Remark}

\newtheorem{eg}[thm]{Example}

\newcommand{\HS}{\mathcal H}
\newcommand{\C}{\mathbb{C}}

\def\textmatrix#1&#2\\#3&#4\\{\bigl({#1 \atop #3}\ {#2 \atop #4}\bigr)}
\def\dispmatrix#1&#2\\#3&#4\\{\left({#1 \atop #3}\ {#2 \atop #4}\right)}

\title[Positive definite functions on a group]{Remarks on positive definite functions on a group}
\author[Jana, Pal and Tomar]{SWAPAN JANA, SOURAV PAL AND NITIN TOMAR}

\address[Swapan Jana]{Mathematics Department, Indian Institute of Technology Bombay,
	Powai, Mumbai - 400076, India.} \email{swapan.jana@iitb.ac.in} 

\address[Sourav Pal]{Mathematics Department, Indian Institute of Technology Bombay,
	Powai, Mumbai - 400076, India.} \email{sourav@math.iitb.ac.in}

\address[Nitin Tomar]{Mathematics Department, Indian Institute of Technology Bombay, Powai, Mumbai-400076, India.} \email{tnitin@math.iitb.ac.in}		

\keywords{positive definite function, unitary representation, block-matrix operator}	

\subjclass[2010]{43A35, 43A65, 47A08, 47A20}	

\thanks{The first named author is supported by the Prime Minister's Research Fellowship (PMRF ID 1302045), Government of India. The second named author is supported by the Seed Grant of IIT Bombay, the CDPA and the MATRICS Award (Award No. MTR/2019/001010) of Science and Engineering Research Board (SERB), India. The third named author is supported by the Prime Minister's Research Fellowship (PMRF ID 1300140), Government of India.}	
\begin{document}
	\maketitle

	\begin{abstract}
		We study the operator-valued positive definite functions on a group using positive block matrices. We give an alternative proof to Brehmer positivity for doubly commuting contractions. We classify all commuting unitary representations over a finite group. We show by examples that the power of a positive-definite function may not be positive definite and also the power of a unitary representation may not be a unitary representation. We also characterize all unitary representations whose powers are also unitary representations.
		\end{abstract} 
\section{Introduction}	
	
\vspace{0.3cm}		
	
	\noindent  Throughout the paper, all operators are bounded linear maps acting on complex Hilbert spaces. For a Hilbert space $\HS$, we denote by $\mathcal B(\HS)$ the algebra of operators on $\HS$ with identity $I_\HS$. A contraction is an operator with norm atmost one. Given a contraction $T$, $D_T$ denotes the unique positive square root of $I_\HS-T^*T$. Also, $\mathbb{D}$ and $\mathbb{Z}$ denote the unit disk in the complex plane $\mathbb{C}$ and the group of integers respectively. 
	
	\medskip

We begin with a celebrated theorem due to Sz.-Nagy which states that every contraction can be realized as a part of a unitary operator.
	
	\begin{thm}[Sz.-Nagy, \cite{NagyFoias}]\label{Nagy}
	
		If $T$ is a contraction acting on a Hilbert space $\mathcal{H}$, then there exist a Hilbert space $\mathcal{K} \supseteq \mathcal{H}$ and a unitary $U$ on $\mathcal{K}$ such that 
		\[
		T^k=P_\mathcal{H}U^k|_{\mathcal{H}}
		\]
for all $k \in \mathbb{N}\cup \{0\}$. Moreover, $\mathcal{K}$ can be chosen to be minimal in the sense that $\mathcal{K}$ is the smallest closed reducing subspace for $U$ that contains $\HS$.
\end{thm}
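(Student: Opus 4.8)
The plan is to give the classical two-step construction: first dilate $T$ to an isometry, then extend that isometry to a unitary, exploiting the fact that the asserted identity only concerns nonnegative powers. Recall the defect operator $D_T=(I_\mathcal{H}-T^*T)^{1/2}\ge 0$ from the introduction, and let $\mathcal{D}_T=\overline{\operatorname{ran} D_T}$ be the associated defect space. The identity $\|Th\|^2+\|D_Th\|^2=\|h\|^2$, valid for every $h\in\mathcal{H}$ since $\langle D_T^2h,h\rangle=\|h\|^2-\|Th\|^2$, is the single algebraic fact that drives the whole argument.

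First I would build the minimal isometric dilation on $\mathcal{K}_+=\mathcal{H}\oplus\ell^2(\mathbb{N},\mathcal{D}_T)$ via the ``staircase'' formula $V(h,d_1,d_2,\dots)=(Th,\,D_Th,\,d_1,\,d_2,\dots)$. The defect identity shows at once that $V$ is an isometry, and iterating gives $V^k(h,0,0,\dots)=(T^kh,\,D_TT^{k-1}h,\dots,D_Th,0,\dots)$, whose $\mathcal{H}$-component is exactly $T^kh$; hence $P_\mathcal{H}V^k|_\mathcal{H}=T^k$ for all $k\in\mathbb{N}\cup\{0\}$.

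Next I would upgrade the isometry $V$ to a unitary without disturbing this identity. Writing $\mathcal{L}=\mathcal{K}_+\ominus V\mathcal{K}_+=\ker V^*$ for the wandering subspace, set $\mathcal{K}=\mathcal{K}_+\oplus\ell^2(\mathbb{N},\mathcal{L})$ and define $U(k,\ell_1,\ell_2,\dots)=(Vk+\ell_1,\,\ell_2,\,\ell_3,\dots)$. The orthogonality $Vk\perp\ell_1$ makes $U$ isometric, while the orthogonal decomposition $\mathcal{K}_+=V\mathcal{K}_+\oplus\mathcal{L}$ makes it surjective, so $U$ is unitary, and by construction $U|_{\mathcal{K}_+}=V$. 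The crucial point is that $\mathcal{K}_+$ is $U$-invariant, so $U^k|_{\mathcal{K}_+}=V^k$ for every $k\ge 0$, and therefore $P_\mathcal{H}U^k|_\mathcal{H}=P_\mathcal{H}V^k|_\mathcal{H}=T^k$. Finally, restricting $U$ to the reducing subspace $\overline{\operatorname{span}}\{U^n\mathcal{H}:n\in\mathbb{Z}\}$ delivers the asserted minimality.

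The step I expect to require the most care is the passage from the isometry to the unitary: one must verify that extending $V$ only adjoins ``backward'' data and leaves $\mathcal{K}_+$ invariant, which is precisely what allows the power formula to survive. This is also where the hypothesis $k\ge 0$ is essential, since an isometric dilation already reproduces the nonnegative powers and unitarity is needed only to realize $U$ as a genuine unitary operator rather than for the identity itself. In keeping with the theme of the paper, I note the alternative route: one shows that the $\mathcal{B}(\mathcal{H})$-valued function on $\mathbb{Z}$ given by $n\mapsto T^{n}$ for $n\ge 0$ and $n\mapsto T^{*|n|}$ for $n<0$ is positive definite, and then invokes a Naimark-type dilation; there the genuine work is the positivity of the block Toeplitz matrix $[T^{\,i-j}]$, for which a Schur-complement induction reduces the general block to the contraction inequality $I-TT^*\ge 0$.
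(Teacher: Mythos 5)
Your construction is correct: the defect identity makes the staircase map $V$ on $\mathcal{K}_+=\mathcal{H}\oplus\ell^2(\mathbb{N},\mathcal{D}_T)$ an isometry reproducing the powers $T^k$, the extension $U$ on $\mathcal{K}_+\oplus\ell^2(\mathbb{N},\mathcal{L})$ with $\mathcal{L}=\ker V^*$ is unitary and leaves $\mathcal{K}_+$ invariant, so the compression formula survives, and cutting down to $\bigvee_{n\in\mathbb{Z}}U^n\mathcal{H}$ gives minimality. However, this is a genuinely different route from the one the paper follows. The paper does not construct the dilation at all: it obtains Theorem \ref{Nagy} as a special case of Naimark's theorem (Theorem \ref{NeumarkI}) applied to the $\mathcal{B}(\mathcal{H})$-valued function on $\mathbb{Z}$ defined in (\ref{T + on Z}), so for the paper the entire content of the result is the positive definiteness of that function, which is established in Section \ref{+ on Z^n} (Theorem \ref{+ on Z}, Proposition \ref{+ on Z1}, Corollary \ref{particular example Z}) by proving positivity of the block Toeplitz matrices $\Delta_n=[T^{j-i}]$ --- and there not by the Schur-complement induction you sketch in your closing remark, but by writing $\Delta_n=(I_n-Q)^{-1}+(I_n-Q^*)^{-1}-I_n$ for a nilpotent shift $Q$ built from $T^*$ and computing $\langle\Delta_n h,h\rangle=\|y\|^2-\|Qy\|^2\geq 0$. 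The trade-off is the usual one: your two-step construction is elementary, self-contained, and exhibits the dilation space explicitly, making transparent why only the powers $k\geq 0$ are recovered (invariance, not reduction, of $\mathcal{K}_+$); the paper's route outsources the construction to Naimark's theorem but fits the theorem into the general theory of positive definite functions on arbitrary groups, which is precisely the framework the rest of the paper develops and generalizes (e.g.\ to $\mathbb{Z}\oplus\mathbb{Z}$ and Brehmer positivity).
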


There are several proofs in the literature of this fundamental result, e.g. \cite{Nagy+, NagyFoias, Schaffer}. One particular proof (see Section 8.1 in \cite{NagyFoias}) capitalizes the theorem of Naimark \cite{NeumarkI} on operator-valued positive definite functions on a group. Since then positive definite functions on a group have been well-studied in the context of dilation, e.g. see \cite{Foias and Frazho, NagyFoias} and the references therein. In this article, we further investigate behaviour of positive definite operator-valued functions on a group. 

\begin{defn}

	Let $G$ be a group with identity $e$. A map $T: G \to \mathcal{B}(\HS)$ is said to be

	\begin{enumerate}

		\item \textit{(strictly) positive definite} if $T\left(s^{-1}\right)=T(s)^* \text { for every } s \in G$ and the sum
				\begin{equation*}
			\sum_{s \in G} \sum_{t \in G} \langle T(t^{-1} s) h(s), h(t)\rangle, 
		\end{equation*}
		is (strictly) positive for every $h \in c_{00}(G, \mathcal{H}).$ Here the set $c_{00}(G, \mathcal{H})$ denotes the collection of functions from $G$ to $\mathcal{H}$ which takes non-zero values on a finite subset of $G$ only.
		\vspace{0.2cm}
		\item a \textit{unitary representation} if each $T(s)$ is a unitary operator on $\mathcal{H}$ such that $T(e)=I_\HS$ and $T(s)T(t)=T(st)$ for $s, t \in G$.

	\end{enumerate} 

\end{defn}


Given an operator $T \in \mathcal{B}({\HS})$, Sz.-Nagy \cite{Nagy+} realized that the map on $\mathbb{Z}$ given by

\begin{equation}\label{T + on Z}
m \mapsto \left\{
\begin{array}{ll}
	T^m & m \geq 1\\
	I_{\HS} & m = 0\\
	T^{*|m|} & m <0\\
\end{array} 
\right.
\end{equation}
is positive definite if and only if $T$ is a contraction. Consequently, as a special case of the following theorem due to Naimark we obtain a unitary dilation of a contraction $T$ (as given in Theorem \ref{Nagy}).
	\begin{thm}[Naimark, \cite{NeumarkI}]\label{NeumarkI}
		
	For every positive definite function $T(s)$ on $G$, whose values are operators on a Hilbert space $\mathcal{H},$ with $T(e)=I_\mathcal{H},$ there is a
	unitary representation $U(s)$ on $G$ on a space $\mathcal{K}$ containing $\mathcal{H}$ as subspace, such that
	\begin{equation*}\label{dilation}
		T(s)=P_\mathcal{H}U(s)|_\mathcal{H} \quad(s \in G)    
	\end{equation*}
	and
	\begin{equation*}\label{minimality}
		\mathcal{K}=\bigvee_{s \in G}U(s)V\mathcal{H} \quad(\mbox{minimality condition}).    
	\end{equation*}
	This unitary representation of $G$ is determined by the function $T(s)$ upto isomorphism. Conversely, given  a
	unitary representation $U(s)$ on $G$ on a space $\mathcal{K}$ and a subspace $\mathcal{H}$ of $\mathcal{K},$ the map $T: G \to \mathcal{B}(\mathcal{H})$ defined by 
	\[
	T(s)=P_\mathcal{H}U(s)|_\mathcal{H} \quad(s \in G).
	\]
	is a positive definite function with $T(e)=I_\mathcal{H}.$
	
\end{thm}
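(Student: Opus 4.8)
The plan is to build $\mathcal{K}$ by a Kolmogorov/GNS-type construction directly from the positive definite kernel, to read off the dilation and minimality from that construction, to obtain uniqueness by an intertwining argument on a dense spanning set, and to settle the converse by a single computation. I would treat existence, uniqueness, and the converse in that order.

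For existence I would start from the vector space $c_{00}(G,\mathcal{H})$ and equip it with the sesquilinear form
\[
\langle f, g \rangle_0 \;=\; \sum_{s \in G}\sum_{t \in G} \langle T(t^{-1}s)\,f(s),\, g(t)\rangle .
\]
Positive definiteness of $T$ is precisely the assertion $\langle f,f\rangle_0 \ge 0$, so this form is positive semidefinite; by Cauchy--Schwarz the set $N=\{f:\langle f,f\rangle_0=0\}$ is a subspace, the form descends to an inner product on $c_{00}(G,\mathcal{H})/N$, and I let $\mathcal{K}$ be the completion. Writing $\delta_r\otimes h$ for the function with value $h$ at $r$ and $0$ elsewhere, the map $V\colon \mathcal{H}\to\mathcal{K}$, $Vh=[\delta_e\otimes h]$, satisfies $\langle \delta_e\otimes h,\delta_e\otimes h\rangle_0=\langle T(e)h,h\rangle=\langle h,h\rangle$ because $T(e)=I_\mathcal{H}$, so $V$ is an isometry and identifies $\mathcal{H}$ with a subspace of $\mathcal{K}$.

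The representation is left translation: set $(\widetilde U(r)f)(s)=f(r^{-1}s)$ on $c_{00}(G,\mathcal{H})$. The key step is the invariance of the form; substituting $s=rs'$ and $t=rt'$ in the double sum collapses $T((rt')^{-1}(rs'))=T(t'^{-1}s')$, whence $\langle \widetilde U(r)f,\widetilde U(r)g\rangle_0=\langle f,g\rangle_0$. This shows at once that $\widetilde U(r)$ preserves $N$ and descends to an isometry of the quotient; since $\widetilde U(r^{-1})$ inverts it, the extension $U(r)$ to $\mathcal{K}$ is unitary, and $U(r)U(r')=U(rr')$ is read off from the formula. For the dilation identity note $\widetilde U(s)(\delta_e\otimes h)=\delta_s\otimes h$, so
\[
\langle U(s)Vh,\,Vk\rangle=\langle \delta_s\otimes h,\,\delta_e\otimes k\rangle_0=\langle T(s)h,k\rangle,
\]
which is exactly $T(s)=P_\mathcal{H}U(s)|_\mathcal{H}$ after identifying $\mathcal{H}$ with $V\mathcal{H}$ and $P_\mathcal{H}$ with $V^*$. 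Minimality is automatic, since the vectors $U(s)Vh=[\delta_s\otimes h]$ span the image of $c_{00}(G,\mathcal{H})$, which is dense in $\mathcal{K}$.

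For uniqueness, given a second minimal dilation $(\mathcal{K}',U',V')$, the dilation identity together with $U(t)^*U(s)=U(t^{-1}s)$ gives $\langle U(s)Vh,U(t)Vk\rangle=\langle T(t^{-1}s)h,k\rangle$, and the same formula holds in $\mathcal{K}'$; hence $U(s)Vh\mapsto U'(s)V'h$ is a well-defined isometry on a dense set, extending to a unitary $\mathcal{K}\to\mathcal{K}'$ that intertwines $U$ with $U'$ and $V$ with $V'$. Finally, for the converse, with $T(s)=P_\mathcal{H}U(s)|_\mathcal{H}$ one has $T(e)=I_\mathcal{H}$, $T(s^{-1})=P_\mathcal{H}U(s)^*|_\mathcal{H}=T(s)^*$, and for $h\in c_{00}(G,\mathcal{H})$,
\[
\sum_{s,t}\langle T(t^{-1}s)h(s),h(t)\rangle=\sum_{s,t}\langle U(t)^*U(s)h(s),h(t)\rangle=\Big\|\sum_{s}U(s)h(s)\Big\|^2\ge 0,
\]
where $P_\mathcal{H}$ may be dropped since $h(t)\in\mathcal{H}$. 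The one genuinely delicate point throughout is the translation-invariance computation of the form, which simultaneously yields well-definedness on the quotient and unitarity of $U(r)$; everything else is bookkeeping.
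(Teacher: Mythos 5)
Your proposal is correct, but there is nothing in the paper to compare it against: Theorem \ref{NeumarkI} is quoted as Naimark's classical result (and its generalization, Theorem \ref{NeumarkII}, is likewise cited from Paulsen) without any proof being given in the text. Your argument is the standard Kolmogorov/GNS-type construction, which is essentially the classical proof found in the cited sources: the semi-inner product on $c_{00}(G,\mathcal{H})$, left translation as the representation, the dilation identity from $\langle \delta_s\otimes h,\delta_e\otimes k\rangle_0=\langle T(s)h,k\rangle$, minimality because the classes $[\delta_s\otimes h]=U(s)Vh$ span a dense subspace, uniqueness via the intertwining isometry on that spanning set, and the converse via the computation $\sum_{s,t}\langle T(t^{-1}s)h(s),h(t)\rangle=\bigl\|\sum_s U(s)h(s)\bigr\|^2\geq 0$. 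All the essential steps check out, including the translation-invariance computation that you correctly identify as the crux. Two minor points you could make explicit: (i) Hermitian symmetry of $\langle\cdot,\cdot\rangle_0$, needed before invoking Cauchy--Schwarz, follows either from the hypothesis $T(s^{-1})=T(s)^*$ or automatically from positivity of the form over $\mathbb{C}$; (ii) in the uniqueness step, surjectivity of the intertwining isometry onto $\mathcal{K}'$ uses the minimality of the second dilation, so the uniqueness assertion should be read (as is standard) as uniqueness among \emph{minimal} dilations. Your construction of the isometry $V$ and identification of $\mathcal{H}$ with $V\mathcal{H}$ also correctly resolves the slight notational mismatch in the paper's statement, where the minimality condition is written with a $V$ inherited from the more general Theorem \ref{NeumarkII}.
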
 

\medskip

 One can associate certain block matrices to an operator-valued function $T$ on a group and show that $T$ is positive definite if and only if the block matrices are positive, e.g. see \cite{Foias and Frazho}. We discuss such correspondences via examples in Section \ref{Sec_+ functions, matrices}. We find characterizations of positive definite functions on the groups of orders upto 4. In Section \ref{+ on Z^n}, we study positive definite functions on $\mathbb{Z}$ and $\mathbb{Z}\oplus \mathbb{Z}$. As an application, we provide an alternative proof ot the fact that for $T \in \mathcal{B}(\HS)$, the map given in (\ref{T + on Z}) is positive definite if and only if $\|T\| \leq 1$. We further prove a widely known extension of this result (see Section 9 in \cite{NagyFoias}) to the two-variables setting, namely the Brehmer positivity, in Theorem \ref{+ on Z+Z} using the block matrix technique.

\medskip

About positive definite functions and unitary representations, we move in two directions. First, in Section \ref{structure thm} we characterize a unitary representation on a finite group whose image consists of the commuting unitaries. We find such unitary representations explicitly for the symmetric and dihedral groups. Second, we address the following question: if $T$ is a positive definite function on a group $G$, then does $T(s)^n=P_\HS U(s)^n|_\HS$ hold for every  $s \in G$ and $n \in \mathbb{N}$ ? To answer this, we define $T_n: G \to \mathcal{B}(\mathcal{H})$ as $s \mapsto T(s)^n$ for $n \in \mathbb{N}$. We also explore if $T_n$ is positive definite when $T$ is so and what happens when $T$ is a unitary representation. In Section \ref{power dilation}, we show that these two questions have negative answers. However, it turns out that if $\{T(s)\}$ consists of commuting matrices or commuting normal operators, then $T_n$ is positive definite for every $n \in \mathbb{N}$. We conclude the article by discussing if $T_n(s)=P_\HS U_n(s)|_\HS$ whenever $T_n$ and $U_n$ define a positive definite function and unitary representation respectively. We show that even with such a strong hypothesis, the result fails to be true.

\vspace{0.2cm}

	\section{Some elementary results}\label{Sec_Neumark}
	
	\vspace{0.2cm}

 \noindent Recall that for a group $G$ with identity $e$, Naimark's Theorem (Theorem \ref{NeumarkI}) states that every $\mathcal{B}(\HS)$-valued positive definite function $T$ on $G$ with $T(e)=I_\HS$ gives rise to a unitary representation $U$ on $G$ such that $T(s)=P_\HS U(s)|_\HS$ for every $s$ in $G$. In the literature, there is a generalization of Naimark's Theorem by omitting the hypothesis that $T(e)=I_\HS$. Indeed, one can still obtain a unitary representation $U$ but $T(s) \ne P_\HS U(s)|_\HS$ for some $s$ in $G$. This gives a clear idea about the importance of the assumption $T(e)=I_\HS$ in Naimark's Theorem. En route, we obtain some useful results as a direct consequence of the generalization.  We start with the following lemma.\\


\begin{lem}\label{contractionsI}
	
	 Let $G$ be a group with identity $e$ and let $\HS$ be a Hilbert space. For a $\mathcal{B}(\HS)$-valued positive definite function $T$ on $G$ if $T(e)=I_\HS$, then $\|T(s)\| \leq 1$ for every $s \in G$.

\end{lem}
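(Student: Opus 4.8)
The plan is to test the positive definiteness condition against a function supported on just two group elements and thereby reduce the operator-norm bound to an elementary scalar inequality. Fix an arbitrary $g \in G$. The key choice is to take $h \in c_{00}(G,\HS)$ supported on the two points $e$ and $g$, say $h(e)=x$ and $h(g)=y$ for arbitrary $x,y \in \HS$, with $h$ vanishing everywhere else.

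With this $h$, the double sum $\sum_{s}\sum_{t}\langle T(t^{-1}s)h(s),h(t)\rangle$ receives contributions only from the four pairs $(s,t)\in\{(e,e),(g,e),(e,g),(g,g)\}$. Using the hypothesis $T(e)=I_\HS$ for the two diagonal pairs and the symmetry $T(g^{-1})=T(g)^*$ for the off-diagonal ones, these terms evaluate to $\|x\|^2$, $\langle T(g)y,x\rangle$, $\langle T(g)^*x,y\rangle$ and $\|y\|^2$. Since $\langle T(g)^*x,y\rangle=\overline{\langle T(g)y,x\rangle}$, positive definiteness gives the scalar inequality
\[
\|x\|^2+\|y\|^2+2\,\mathrm{Re}\,\langle T(g)y,x\rangle \geq 0
\]
valid for all $x,y\in\HS$.

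To extract the contraction bound, I would complete the square in $x$. Writing $z=T(g)y$, the left-hand side equals $\|x+z\|^2+\|y\|^2-\|z\|^2$, so the strongest constraint comes from the choice $x=-z=-T(g)y$, which makes the first term vanish and yields $\|y\|^2\geq\|T(g)y\|^2$. As $y\in\HS$ was arbitrary, this says $\|T(g)y\|\leq\|y\|$ for every $y$, i.e. $\|T(g)\|\leq 1$; and $g\in G$ was arbitrary, which is the claim.

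The argument is completely elementary and I do not anticipate a genuine obstacle. The only points demanding care are the bookkeeping of the four block entries and the correct use of the $*$-symmetry of $T$ to merge the two cross terms into a single real part; once that is in place, the substitution $x=-T(g)y$ does all the work. It is worth noting that the hypothesis $T(e)=I_\HS$ enters precisely in identifying the two diagonal terms as honest squared norms, and is plainly indispensable for the conclusion.
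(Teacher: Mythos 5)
Your proof is correct and follows essentially the same route as the paper: both test positive definiteness on a function supported at $\{e,g\}$ and substitute $h(e)=-T(g)y$, $h(g)=y$ to obtain $\|y\|^2-\|T(g)y\|^2\geq 0$. The only cosmetic difference is that you first record the general two-point inequality and then specialize, whereas the paper makes that substitution from the outset.
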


\begin{proof} 
	
		 For $x \in \mathcal{H}$ and $a \in G$, let $T(a)x = -x_1$. Let us define $h: G \rightarrow \HS$ by
		\[
		h(s)=\left\{
		\begin{array}{ll}
			x_1 & s=e\\
			x &  s= a\\
			0 & s \ne e,a \ .
		\end{array} 
		\right.
		\]
		Then, evidently $h \in c_{00}(G, \mathcal{H})$. For this particular $h$, we have 
		\begin{equation*}
			\begin{split}
				0 \leq \sum_{s \in G} \sum_{t \in G} \langle T(t^{-1} s) h(s), h(t)\rangle &= \langle T(e)x_1, x_1\rangle  + \langle T(a)x, x_1 \rangle+ \langle T(a^{-1})x_1, x \rangle+\langle T(e)x, x\rangle\\
				&= \langle x_1, x_1\rangle  + \langle T(a)x, x_1 \rangle+ \langle T(a)^{*}x_1, x \rangle+\langle x, x\rangle\\
				&= \|T(a)x\|^2  - \langle T(a)x, T(a)x \rangle- \langle T(a)x, T(a)x \rangle+\| x\|^2\\
				&=\|x\|^2-\|T(a)x\|^2.\\
			\end{split}
		\end{equation*}
		Thus, $T(a)$ is a contraction for every $a \in G.$ 
	\end{proof}
 It is naturally asked if the converse holds i.e. for a $\mathcal B(\HS)$-valued positive definite function $T$ on a group $G$, if $\{T(a): a \in G\}$ is a family of contractions, then is $T(e)=I_{\HS}$ ? The following lemma shows that it is not true in general and we characterize all such positive definite functions.
	\begin{lem}\label{contractionsII}
		
		Let $T$ be a $\mathcal B(\HS)$-valued positive definite function acting on a group $G$. Then the range of $T$ consists of contractions if and only if $T(e)$ is a contraction.
	
\end{lem}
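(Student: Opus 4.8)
The plan is to dispatch the easy implication at once and then reduce the substantive direction to a variant of the computation in Lemma \ref{contractionsI}. For the forward direction, if every operator in the range of $T$ is a contraction then in particular $T(e)$ is a contraction, since $e \in G$; nothing further is required.

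For the converse, suppose $T(e)$ is a contraction. The first step is to observe that positive definiteness forces $T(e)$ to be a \emph{positive} operator: testing the defining inequality against a function $h \in c_{00}(G,\HS)$ supported at a single point $a$ with value $x$ gives $0 \leq \langle T(a^{-1}a)x, x\rangle = \langle T(e)x, x\rangle$ for all $x \in \HS$, so $T(e) \geq 0$. (Self-adjointness of $T(e)$ also follows from $T(e^{-1}) = T(e)^*$.) Combining positivity with the contraction hypothesis, the spectrum of $T(e)$ lies in $[0,1]$, whence $0 \leq T(e) \leq I_\HS$. This upgrade from ``contraction'' to ``$\leq I_\HS$'' is the one genuinely new ingredient compared with Lemma \ref{contractionsI}, and it is exactly what will make the two-point computation close.

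The second step reruns the two-point test from the proof of Lemma \ref{contractionsI}, now without the luxury of $T(e) = I_\HS$. Fixing $a \in G$ and $x \in \HS$, I would take $h$ supported on $\{e,a\}$ with $h(a) = x$ and $h(e) = -T(a)x$. Expanding $\sum_{s \in G}\sum_{t \in G}\langle T(t^{-1}s)h(s), h(t)\rangle \geq 0$ and using $T(a^{-1}) = T(a)^*$ yields
\begin{equation*}
0 \leq \langle T(e)T(a)x, T(a)x\rangle - 2\|T(a)x\|^2 + \langle T(e)x, x\rangle.
\end{equation*}
I would then invoke $0 \leq T(e) \leq I_\HS$ to bound the two positive inner products above by $\|T(a)x\|^2$ and $\|x\|^2$ respectively, which after rearranging gives $2\|T(a)x\|^2 \leq \|T(a)x\|^2 + \|x\|^2$, that is, $\|T(a)x\| \leq \|x\|$. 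Since $x \in \HS$ and $a \in G$ were arbitrary, every $T(a)$ is a contraction.

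The main obstacle, such as it is, lies in the first step: one must notice that positive definiteness supplies $T(e) \geq 0$ for free, so that the bare assumption ``$T(e)$ is a contraction'' secretly delivers the stronger relation $T(e) \leq I_\HS$ needed to absorb the cross terms. Once this is in hand, the second step is a routine sign-chasing variant of the earlier lemma, with the estimates happening to point in the favourable direction.
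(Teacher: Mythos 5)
Your proposal is correct and follows essentially the same route as the paper: both run the two-point test with $h(e) = -T(a)x$, $h(a) = x$, note that positive definiteness forces $T(e) \geq 0$, and then absorb the two $T(e)$-terms to conclude $\|T(a)x\| \leq \|x\|$. The only cosmetic difference is that you bound $\langle T(e)y, y\rangle \leq \|y\|^2$ via the operator inequality $0 \leq T(e) \leq I_\HS$, whereas the paper gets the same bound from Cauchy--Schwarz together with $\|T(e)\| \leq 1$; the two are interchangeable.
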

	\begin{proof}
		The forward part is trivial. We assume that $T(e)$ is a contraction. For any $x \in \HS$ and $a \in G$, if $T(a)x= -x_1$ then It follows from the proof of Lemma \ref{contractionsI} that 
		\begin{equation}\label{>0}
			0 \leq \langle T(e)x_1, x_1\rangle  + \langle T(a)x, x_1 \rangle+ \langle T(a^{-1})x_1, x \rangle+\langle T(e)x, x\rangle.
		\end{equation}
		Substituting $x=0$ in (\ref{>0}) we have that $T(e)$ is a positive operator. Also, if we write $x_1=-T(a)x$ in (\ref{>0}), then by an application of Cauchy-Schwarz inequality, we have
		\begin{equation*}
			\begin{split}
				0 & \leq \|T(e)\|\cdot \|x_1\|^2+ \langle T(a)x, x_1 \rangle+ \langle T(a)^*x_1, x \rangle+\|T(e)\|\cdot \|x\|^2\\ 
				& =\|T(e)\|\cdot \|T(a)x\|^2 -\langle T(a)x, T(a)x \rangle- \langle T(a)x, T(a)x \rangle+\|T(e)\|\cdot \|x\|^2\\
				& \leq  \|T(a)x\|^2 -\|T(a)x\|^2- \|T(a)x\|^2 + \|x\|^2 \quad(\because \|T(e)\| \leq 1 )\\
				&=\|x\|^2-\|T(a)x\|^2.\\
			\end{split}
		\end{equation*}
		Thus $T(a)$ is a contraction for every $a \in G$ and the proof is complete.
	\end{proof}
 Now, we present a generalized Naimark's Theorem where the hypothesis that $T(e)=I_{\HS}$ is omitted. Interestingly, we still obtain a unitary representation $U$ on the same group. However, we should not expect that compression of $U(s)$ gives back $T(s)$ for every $s$ as $T(s)$ may not always be a contraction.      
	\begin{thm}[\cite{Paulsen}, Theorem 4.8]\label{NeumarkII}
		
	Let $G$ be a group and let $\HS$ be a Hilbert space. For every $\mathcal{B}(\mathcal{H})$-valued positive definite function $T(s)$ on $G$, there exist a Hilbert space $\mathcal{K}$, an operator $V: \mathcal{H} \to \mathcal{K}$ with $V^*V=T(e)$ and a
		unitary representation $U(s)$ on $G$ on a space $\mathcal{K}$ such that
		\begin{equation}\label{dilation}
			T(s)=V^*U(s)V \quad(s \in G)    
		\end{equation}
		and
		\begin{equation}\label{minimality}
			\mathcal{K}=\bigvee_{s \in G}U(s)V\mathcal{H} \quad(\mbox{minimality condition}).    
		\end{equation}
		Also, this unitary representation $U(s)$ is unique upto an isomorphism. Conversely, given an operator $V: \mathcal{H} \to \mathcal{K}$ and a
		$\mathcal B(\mathcal K)$-valued unitary representation $U(s)$ on $G$, the map $T: G \to \mathcal{B}(\mathcal{H})$ defined by 
		\[
		T(s)=V^*U(s)V \quad(s \in G)
		\]
		is a positive definite function.
	\end{thm}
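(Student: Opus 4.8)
The plan is to reconstruct the data $(\mathcal K, V, U)$ by a Gelfand--Naimark--Segal (equivalently, Kolmogorov) construction carried out on the space $c_{00}(G,\mathcal H)$ of finitely supported $\mathcal H$-valued functions on $G$. On this space I would introduce the sesquilinear form
\[
\langle f, g\rangle_T=\sum_{s\in G}\sum_{t\in G}\langle T(t^{-1}s)f(s),g(t)\rangle,
\]
which is well defined since $f,g$ have finite support. The hypothesis $T(s^{-1})=T(s)^*$ makes this form Hermitian, and the positive-definiteness of $T$ is exactly the assertion that $\langle f,f\rangle_T\ge 0$. Writing $N=\{f:\langle f,f\rangle_T=0\}$ for the null space (a subspace, by the Cauchy--Schwarz inequality for semi-inner products), I would pass to the quotient $c_{00}(G,\mathcal H)/N$, on which $\langle\,\cdot\,,\,\cdot\,\rangle_T$ becomes a genuine inner product, and let $\mathcal K$ be its completion. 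The embedding of $\mathcal H$ is supplied by $V\colon\mathcal H\to\mathcal K$, $Vh=[\delta_e\otimes h]$, where $\delta_a\otimes h$ denotes the function equal to $h$ at $a$ and $0$ elsewhere.

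The representation comes from left translation: for $a\in G$ set $(\lambda_a f)(s)=f(a^{-1}s)$. The key computation is that $\lambda_a$ preserves the form, i.e. $\langle\lambda_a f,\lambda_a g\rangle_T=\langle f,g\rangle_T$; this follows by the change of variables $s\mapsto as,\ t\mapsto at$, under which $t^{-1}s$ is unchanged. Consequently $\lambda_a$ maps $N$ into $N$, descends to the quotient, and extends to an isometry $U(a)$ on $\mathcal K$; since $\lambda_{a^{-1}}$ is a two-sided inverse, $U(a)$ is unitary. The relations $U(e)=I_{\mathcal K}$ and $U(a)U(b)=U(ab)$ are immediate from $\lambda_a\lambda_b=\lambda_{ab}$, so $U$ is a unitary representation. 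A direct evaluation then gives $U(a)Vh=[\delta_a\otimes h]$ and hence
\[
\langle V^*U(a)Vh,k\rangle=\langle[\delta_a\otimes h],[\delta_e\otimes k]\rangle_T=\langle T(a)h,k\rangle,
\]
so that $T(s)=V^*U(s)V$ and, taking $s=e$, $V^*V=T(e)$. Minimality holds because the vectors $U(s)Vh=[\delta_s\otimes h]$ already span the dense image of $c_{00}(G,\mathcal H)$ in $\mathcal K$, whence $\mathcal K=\bigvee_{s\in G}U(s)V\mathcal H$.

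For uniqueness I would argue in the standard way: if $(\mathcal K_i,V_i,U_i)$, $i=1,2$, both satisfy the dilation and minimality conditions, then for all $s,t\in G$ and $h,k\in\mathcal H$,
\[
\langle U_i(s)V_ih,U_i(t)V_ik\rangle=\langle V_i^*U_i(t^{-1}s)V_ih,k\rangle=\langle T(t^{-1}s)h,k\rangle,
\]
which is independent of $i$. Hence the assignment $U_1(s)V_1h\mapsto U_2(s)V_2h$ extends to a well-defined surjective isometry $W\colon\mathcal K_1\to\mathcal K_2$ (well-definedness, isometry, and surjectivity all coming from the displayed identity together with minimality), and $W$ intertwines $U_1$ with $U_2$ and satisfies $WV_1=V_2$, giving the asserted isomorphism. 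The converse is a short direct check: for $V$ and a unitary representation $U$ put $T(s)=V^*U(s)V$; then $T(s^{-1})=V^*U(s)^*V=T(s)^*$, and for $h\in c_{00}(G,\mathcal H)$,
\[
\sum_{s\in G}\sum_{t\in G}\langle T(t^{-1}s)h(s),h(t)\rangle=\Big\|\sum_{s\in G}U(s)Vh(s)\Big\|^2\ge 0,
\]
so $T$ is positive definite. I expect the one genuinely load-bearing step to be the translation-invariance of $\langle\,\cdot\,,\,\cdot\,\rangle_T$ together with the verification that $\lambda_a$ respects the null space $N$; everything else is bookkeeping, but that step is precisely where the group law and the positive-definiteness hypothesis are used together to promote each $\lambda_a$ to a unitary.
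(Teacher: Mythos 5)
Your proposal is correct, but note that the paper offers no proof of this statement at all: it is quoted verbatim from the cited reference (\cite{Paulsen}, Theorem 4.8). Your GNS/Kolmogorov construction --- the semi-inner product on $c_{00}(G,\mathcal{H})$, quotient by the null space, completion, left translation promoted to unitaries, and the standard uniqueness and converse arguments --- is precisely the classical proof given in that reference, so there is nothing in the paper to compare it against and no gap to report.
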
 
	
Evidently, it follows that $V$ is an isometry if and only if $T(e)=I_\HS.$ By an application of Theorem \ref{NeumarkII}, one can have a stronger version of Lemma \ref{contractionsII} as given below.
	\begin{prop}\label{contractionsIII}
		Let $T$ be a positive definite function on a group $G$. Then $\|T(s)\| \leq \|T(e)\|$ for every $s\in G$.
	\end{prop}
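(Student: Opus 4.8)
The plan is to extract everything from the single scalar inequality already derived in the proofs of Lemmas~\ref{contractionsI} and~\ref{contractionsII}. Fix $a \in G$ and vectors $x, x_1 \in \HS$, and take $h \in c_{00}(G,\HS)$ supported on $\{e,a\}$ with $h(e)=x_1$ and $h(a)=x$. Positive definiteness of $T$ then yields exactly inequality \eqref{>0}, namely
\begin{equation*}
0 \leq \langle T(e)x_1, x_1\rangle + \langle T(a)x, x_1 \rangle + \langle T(a^{-1})x_1, x \rangle + \langle T(e)x, x\rangle,
\end{equation*}
valid for all $x, x_1 \in \HS$. Putting $x=0$ shows $\langle T(e)x_1,x_1\rangle \geq 0$ for every $x_1$, so $T(e)$ is a positive operator; this is the only structural fact about $T(e)$ that I need.

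Next I would simplify the cross terms. Since $T(a^{-1}) = T(a)^*$, we have $\langle T(a^{-1})x_1, x\rangle = \overline{\langle T(a)x, x_1\rangle}$, so the two middle summands combine to $2\operatorname{Re}\langle T(a)x, x_1\rangle$. Using positivity of $T(e)$ to bound the diagonal terms by $\langle T(e)u,u\rangle \leq \|T(e)\|\,\|u\|^2$, the inequality becomes
\begin{equation*}
0 \leq \|T(e)\|\,\|x_1\|^2 + 2\operatorname{Re}\langle T(a)x, x_1\rangle + \|T(e)\|\,\|x\|^2,
\end{equation*}
which holds for every choice of $x_1$. The key move is then to choose $x_1$ so as to make the cross term as negative as possible. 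Assuming $T(a)x \neq 0$, I would set $x_1 = -t\,T(a)x / \|T(a)x\|$ with a free real parameter $t$; then $\|x_1\|^2 = t^2$ and $\operatorname{Re}\langle T(a)x, x_1\rangle = -t\,\|T(a)x\|$, so the display reduces to the scalar quadratic inequality
\begin{equation*}
0 \leq \|T(e)\|\,t^2 - 2\,\|T(a)x\|\,t + \|T(e)\|\,\|x\|^2, \qquad t \in \mathbb{R}.
\end{equation*}
Because a quadratic with positive leading coefficient that stays nonnegative on all of $\mathbb{R}$ must have nonpositive discriminant, we obtain $\|T(a)x\|^2 \leq \|T(e)\|^2\,\|x\|^2$, and taking the supremum over unit vectors $x$ gives $\|T(a)\| \leq \|T(e)\|$, as required.

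The only point demanding real care is the degenerate case $\|T(e)\|=0$: here $T(e)=0$, the leading coefficient of the quadratic vanishes, and the discriminant argument no longer applies. In that case I would instead return to the simplified inequality, which now reads $0 \leq 2\operatorname{Re}\langle T(a)x, x_1\rangle$ for all $x_1$, and substitute $x_1 = -T(a)x$ directly to get $0 \leq -2\|T(a)x\|^2$; this forces $T(a)x=0$ for all $x$, so $T(a)=0=T(e)$ in norm. Beyond this bookkeeping I do not expect any genuine obstacle: the whole argument is a sharpening of Lemma~\ref{contractionsI}, the single new ingredient being the optimization over the auxiliary parameter $t$ (equivalently, the discriminant condition), which is exactly what upgrades the crude contraction bound to the sharp estimate $\|T(s)\| \leq \|T(e)\|$.
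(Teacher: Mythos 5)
Your proof is correct, but it takes a genuinely different route from the paper. The paper's proof is a one-line consequence of the generalized Naimark dilation theorem (Theorem \ref{NeumarkII}): writing $T(s)=V^*U(s)V$ with $V^*V=T(e)$, one gets $\|T(s)\|\leq\|V^*\|\,\|U(s)\|\,\|V\|=\|V\|^2=\|V^*V\|=\|T(e)\|$ since $U(s)$ is unitary. You instead work entirely from the definition of positive definiteness, using a test function supported on the two points $\{e,a\}$, reducing to the scalar quadratic inequality $0\leq\|T(e)\|t^2-2\|T(a)x\|t+\|T(e)\|\,\|x\|^2$, and invoking the discriminant condition; your treatment of the degenerate case $\|T(e)\|=0$ and of vectors with $T(a)x=0$ is also complete. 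The trade-off is clear: the paper's argument is shorter but leans on the full strength of the dilation theorem (imported from Paulsen without proof), whereas yours is self-contained and elementary, upgrading the crude computation of Lemma \ref{contractionsI} to the sharp bound by optimizing over the free parameter $t$ --- exactly the kind of argument one could present before the dilation machinery is available. Both proofs are valid; yours buys independence from Theorem \ref{NeumarkII}, while the paper's buys brevity and illustrates how dilation theory trivializes such norm estimates.
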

	\begin{proof}
		It follows from Theorem \ref{NeumarkII} that there is an operator $V: \mathcal{H} \to \mathcal{K}$ with $\|V\|^2=\|T(e)\|$ and a
		unitary representation $U$ on $G$ such that $ T(s)=V^*U(s)V$ for all $s \in G$. Therefore, for any $s\in G$ we have
		\begin{equation} \label{eqn:Sec3-01}
		\|T(s)\| =\|V^*U(s)V\| \leq \|V^*\|\cdot \|U(s)\| \cdot \|V\| =\|V\|^2=\|T(e)\|.
		\end{equation}
		  
	\end{proof}

Needless to mention that every $T(s)$ is a contraction if and only if $T(e)$ is a contraction. Also, if $T(e)=0$, then $T$ is indeed the zero function.

\vspace{0.2cm}
	\section{Positive definite functions on small groups and block matrices}\label{Sec_+ functions, matrices}
	
	\vspace{0.2cm}
\noindent	 There is a natural way to associate a family of block matrices to an operator-valued function $T$ acting on a group $G$ such that the positivity of all blocks is equivalent to the positive definiteness of $T$. An interested reader is referred to \cite{Foias and Frazho} for further details. Here we find an explicit description of all positive definite functions on some well-known groups. We begin with a basic lemma.	
	\begin{lem}\label{V^*TV} Let $T: G \to \mathcal{B}(\mathcal{H})$ be a positive definite function and let $V\in\mathcal{B}(\mathcal{H})$ be arbitrary. Then the map $T_{V} : G \to \mathcal{B}(\mathcal{H})$ defined by
		$T_{V}(s)=V^{*}T(s)V$ is positive definite.
		\begin{proof} First note that
			\[
			T_V(s^{-1})=V^*T(s^{-1})V=V^*T(s)^{*}V=(V^*T(s)V)^{*}=T_V(s)^{*},
			\]		
			for every $s \in G.$ For every $h\in c_{00}(G,\mathcal{H}),$ we have that
			
			\[
			\sum_{s,t\in G}\langle T_V(s^{-1}t) h(t),h(s)\rangle=\sum_{s,t\in
				G}\langle V^{*} T(s^{-1}t) V h(t),h(s)\rangle=\sum_{s,t\in G}\langle
			T(s^{-1}t)V h(t), Vh(s)\rangle \geq 0.
			\]
The last inequality holds because the map $s \mapsto V(h(s))$ is in $c_{00}(G, \mathcal{H})$ and $T$ is positive definite.
		\end{proof}
	\end{lem}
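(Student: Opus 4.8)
The plan is to verify directly the two defining properties of a positive definite function for $T_V$, leaning on the corresponding properties of $T$ together with elementary manipulations of the adjoint $V^*$.

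First I would check the symmetry condition. Using $T(s^{-1})=T(s)^*$ together with the identity $(ABC)^*=C^*B^*A^*$, one computes
\[
T_V(s^{-1})=V^*T(s^{-1})V=V^*T(s)^*V=(V^*T(s)V)^*=T_V(s)^*
\]
for every $s\in G$, so the required involution property holds.

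Next I would address the positivity of the quadratic sum. Fixing an arbitrary $h\in c_{00}(G,\mathcal{H})$, I would substitute the definition of $T_V$ and move the factor $V$ across the inner product via the adjoint relation $\langle V^*a,b\rangle=\langle a,Vb\rangle$, obtaining
\[
\sum_{s,t\in G}\langle T_V(t^{-1}s)h(s),h(t)\rangle=\sum_{s,t\in G}\langle T(t^{-1}s)\,Vh(s),\,Vh(t)\rangle.
\]
The key observation is that the right-hand side is precisely the defining sum for $T$ evaluated at the function $g(s):=Vh(s)$.

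The only point requiring (minimal) care is that $g$ again lies in $c_{00}(G,\mathcal{H})$: since $V$ is linear it sends $0$ to $0$, so the support of $g$ is contained in that of $h$ and is therefore finite. With $g\in c_{00}(G,\mathcal{H})$ in hand, the positive definiteness of $T$ gives $\sum_{s,t}\langle T(t^{-1}s)g(s),g(t)\rangle\geq 0$, which completes the argument. I do not anticipate any genuine obstacle here: the statement is essentially the assertion that the congruence $A\mapsto V^*AV$ preserves positivity, transported verbatim to the setting of group-indexed families, and the verification is a one-line substitution once the function $g$ is introduced.
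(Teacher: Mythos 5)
Your proof is correct and follows essentially the same route as the paper's: verify the involution property $T_V(s^{-1})=T_V(s)^*$ by the adjoint identity, then move $V$ across the inner product and recognize the resulting sum as the positive definiteness condition for $T$ applied to $g=Vh\in c_{00}(G,\mathcal{H})$. Your explicit remark that the support of $Vh$ is contained in that of $h$ is exactly the (briefly stated) justification the paper gives for its final inequality.
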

	
	Next, we write down the block matrices associated with a positive definite
	function defined on a group $G.$ Let  $h\in c_{00}(G,\mathcal{H})$ with support $ \{s_1,\dotsc, s_m\} \subseteq G$. It is easy to see that
		\begin{equation*}
		\begin{split}
			\sum_{s,t\in G}\langle T(s^{-1}t)h(t),h(s)\rangle &=\sum_{i=1}^{m}\sum_{j=1}^{m}\langle T(s_j^{-1}s_{i})h(s_{i}),h(s_j)\rangle\\
			&=\sum_{j=1}^{m}\bigg\langle \sum_{i=1}^{m} T(s_j^{-1}s_{i})h(s_{i}),h(s_j)\bigg\rangle\\
			&=\Biggl\langle 
				\begin{bmatrix}
				T(s_{1}^{-1}s_1) & T(s_{1}^{-1}s_2) & \cdots & T(s_{1}^{-1}s_m) \\
				T(s_{2}^{-1}s_1) & T(s_{2}^{-1}s_2) & \cdots & T(s_{2}^{-1}s_m) \\
				\vdots          & \vdots          & \cdots & \vdots\\
				T(s_{m}^{-1}s_1) & T(s_{m}^{-1}s_2) & \cdots & T(s_{m}^{-1}s_m)
			\end{bmatrix}
			\begin{bmatrix}
				h(s_1)\\
				h(s_2)\\
				\vdots\\
				h(s_m)\\
			\end{bmatrix},\begin{bmatrix}
				h(s_1)\\
				h(s_2)\\
				\vdots\\
				h(s_m)\\
			\end{bmatrix}\Biggr\rangle\\
			& = \langle \Delta_T(s_1, \dotsc, s_m) x, x\rangle_{\mathcal{H}^m} \ ,
		\end{split}
	\end{equation*}
	where, $x=\left(h(s_1),\dotsc,h(s_m)\right)$ is in $\mathcal{H}^m$. The block matrix $\Delta_T$ is given by
	
	\[ 
			\Delta_T(s_1, \dotsc, s_m)=\begin{bmatrix}
		T(s_{1}^{-1}s_1) & T(s_{1}^{-1}s_2) & \cdots & T(s_{1}^{-1}s_m) \\
		T(s_{2}^{-1}s_1) & T(s_{2}^{-1}s_2) & \cdots & T(s_{2}^{-1}s_m) \\
		\vdots          & \vdots          & \cdots & \vdots\\
		T(s_{m}^{-1}s_1) & T(s_{m}^{-1}s_2) & \cdots & T(s_{m}^{-1}s_m)
	\end{bmatrix}=\begin{bmatrix}
	T(s_i^{-1}s_j)
\end{bmatrix}_{1 \leq i,j \leq m}.
	\]
	Consequently, we have the following result.
	\begin{prop}[\cite{Foias and Frazho}, Chapter XV]\label{+ arbitrary}
		Let $G$ be a group and let $\HS$ be a Hilbert space. Then $T: G \to \mathcal{B}(\mathcal{H})$ is a positive definite function if and only if  $\Delta_T(s_1, \dotsc, s_m)$ is  positive in $\mathcal{B}(\mathcal{H}^{m})$ for every finite set $\{s_1, \dotsc, s_m\}$ in $G$.
	\end{prop}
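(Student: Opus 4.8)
The plan is to build directly on the computation carried out in the display immediately preceding the statement, which has already done all of the real work. For any $h \in c_{00}(G,\mathcal{H})$ whose support is contained in a finite set $\{s_1,\dotsc,s_m\}$, writing $x=(h(s_1),\dotsc,h(s_m)) \in \mathcal{H}^m$, that computation establishes the identity
\[
\sum_{s,t\in G}\langle T(s^{-1}t)h(t),h(s)\rangle = \langle \Delta_T(s_1,\dotsc,s_m)\,x,\,x\rangle_{\mathcal{H}^m}.
\]
Thus the entire proposition reduces to matching up the two quantifications: over finitely supported functions $h$ on the one side, and over finite tuples $(s_1,\dotsc,s_m)$ together with vectors $x\in\mathcal{H}^m$ on the other.

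For the forward direction I would fix a finite set of distinct elements $\{s_1,\dotsc,s_m\}\subseteq G$ and an arbitrary vector $x=(x_1,\dotsc,x_m)\in\mathcal{H}^m$, then define $h\in c_{00}(G,\mathcal{H})$ by $h(s_i)=x_i$ and $h(s)=0$ for $s\notin\{s_1,\dotsc,s_m\}$. Since the $s_i$ are distinct, this $h$ is well defined and finitely supported, so positive definiteness of $T$ together with the identity above gives $\langle \Delta_T(s_1,\dotsc,s_m)\,x,\,x\rangle\geq 0$. As $x$ ranges over all of $\mathcal{H}^m$, this says exactly that $\Delta_T(s_1,\dotsc,s_m)$ is positive.

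For the converse, given $h\in c_{00}(G,\mathcal{H})$ I would take $\{s_1,\dotsc,s_m\}$ to be its (finite) support and set $x=(h(s_1),\dotsc,h(s_m))$; the same identity combined with the assumed positivity of $\Delta_T(s_1,\dotsc,s_m)$ then yields that the defining sum is nonnegative. The one point requiring separate attention is the symmetry clause $T(s^{-1})=T(s)^*$ built into the definition of a positive definite function, which positivity of the quadratic forms does not obviously encode. I would recover it by specializing to the two-element set $\{e,s\}$: positivity of $\Delta_T(e,s)$ forces that block matrix to be self-adjoint (recall that on a complex Hilbert space $\langle A\xi,\xi\rangle\geq 0$ for all $\xi$ already implies $A=A^*$), and comparing its off-diagonal blocks
\[
\Delta_T(e,s)=\begin{bmatrix} T(e) & T(s) \\ T(s^{-1}) & T(e)\end{bmatrix}
\]
gives precisely $T(s^{-1})=T(s)^*$.

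I do not expect a genuine obstacle, since the nontrivial algebra lives in the pre-statement computation and both implications are then bookkeeping on the quantifiers. The only place demanding care is this last step: one must verify the self-adjointness clause of the definition in the converse rather than assume it silently, and the $\{e,s\}$ specialization handles it cleanly.
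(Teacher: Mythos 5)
Your proof is correct and takes essentially the same route as the paper, which likewise obtains the proposition directly from the displayed identity $\sum_{s,t}\langle T(s^{-1}t)h(t),h(s)\rangle = \langle \Delta_T(s_1,\dotsc,s_m)x,x\rangle$ by matching the quantification over finitely supported $h$ with that over finite tuples and vectors $x\in\mathcal{H}^m$. Your extra step recovering the symmetry clause $T(s^{-1})=T(s)^*$ from positivity (hence self-adjointness) of $\Delta_T(e,s)$ is a detail the paper leaves implicit, and you handle it correctly.
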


 It is evident from the above discussion that the positive definiteness of a function $T$ on a finite group $G$ depends only on a single block matrix of operators, namely $\Delta_T(G)$. So, we have the following proposition.

	\begin{prop}\label{+ finite}
		Let $G=\{s_i: 1 \leq i \leq m\}$ be a finite group. Then $T: G \to \mathcal{B}(\mathcal{H})$ is a positive definite function if and only if the operator $\Delta_T(G)$ is  positive in $\mathcal{B}(\mathcal{H}^{m}).$
	\end{prop}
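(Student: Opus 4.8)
The plan is to read this off directly from Proposition \ref{+ arbitrary}, which characterizes positive definiteness of $T$ by the positivity of every block matrix $\Delta_T(s_{i_1}, \dotsc, s_{i_k})$ ranging over all finite subsets of $G$. The whole content of the present statement is that, when $G$ is finite, this a priori infinite family of positivity conditions is governed by the single operator $\Delta_T(G) \in \mathcal{B}(\HS^m)$. So I would not re-run the summation computation that produced $\Delta_T$; I would instead argue that $\Delta_T(G) \geq 0$ is equivalent to the full criterion of Proposition \ref{+ arbitrary}.

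The forward implication is immediate: if $T$ is positive definite, then Proposition \ref{+ arbitrary} applied to the particular finite set $\{s_1, \dotsc, s_m\} = G$ gives at once that $\Delta_T(G)$ is positive in $\mathcal{B}(\HS^m)$. For the converse, I would assume $\Delta_T(G) \geq 0$ and show that the hypothesis of Proposition \ref{+ arbitrary} is met, i.e. that $\Delta_T(s_{i_1}, \dotsc, s_{i_k}) \geq 0$ for every finite subset $\{s_{i_1}, \dotsc, s_{i_k}\}$ of $G$. Since $G$ is finite, every such subset is a sub-collection of $\{s_1, \dotsc, s_m\}$, and the associated block matrix $[\,T(s_{i_a}^{-1}s_{i_b})\,]_{1 \leq a,b \leq k}$ is precisely the principal submatrix of $\Delta_T(G) = [\,T(s_i^{-1}s_j)\,]_{1 \leq i,j \leq m}$ obtained by retaining the block rows and columns indexed by $i_1, \dotsc, i_k$.

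The one thing to make explicit is that a principal submatrix of a positive operator is again positive. I would realize this by a compression: let $P : \HS^k \to \HS^m$ be the isometry sending the $a$-th coordinate block of $\HS^k$ into the $i_a$-th block of $\HS^m$ (this $P$ simultaneously selects the chosen blocks and accommodates any ordering of them). Then $\Delta_T(s_{i_1}, \dotsc, s_{i_k}) = P^* \Delta_T(G) P$, and $\langle P^*\Delta_T(G)P\,x, x\rangle = \langle \Delta_T(G)\,Px, Px\rangle \geq 0$ for all $x \in \HS^k$, so the submatrix is positive. Feeding this back into Proposition \ref{+ arbitrary} yields that $T$ is positive definite. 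Note that the self-adjointness requirement $T(s^{-1}) = T(s)^*$ needs no separate verification, as it is already subsumed in (the self-adjointness of) the positive operator $\Delta_T(G)$ and is delivered automatically by Proposition \ref{+ arbitrary}.

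I expect the only point requiring care to be this reduction itself, namely making precise that the universal quantifier over finite subsets in Proposition \ref{+ arbitrary} is controlled by the single matrix $\Delta_T(G)$ via the principal-submatrix/compression observation. There is no genuine analytic obstacle here beyond this bookkeeping; the statement is in effect the finite-group specialization of the preceding proposition.
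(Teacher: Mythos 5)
Your proposal is correct and is essentially the paper's own argument: the paper treats the proposition as immediate from the block-matrix computation and Proposition \ref{+ arbitrary}, and your principal-submatrix/compression step (the identity $\Delta_T(s_{i_1},\dotsc,s_{i_k})=P^*\Delta_T(G)P$) is just the explicit bookkeeping behind that ``evident'' reduction, equivalent to padding $h$ by zeros on the rest of $G$. Your remark that the symmetry condition $T(s^{-1})=T(s)^*$ comes for free from the self-adjointness of the positive operator $\Delta_T(G)$ is also correct and consistent with how the paper uses Proposition \ref{+ arbitrary}.
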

	\begin{rem}
		Note that the positivity of the matrix $\Delta_T(G)$ is independent of the arrangements of the elements of $G$, when $G$ is finite. For a postive definite function $T$ on a finite group $G$ if $\{s_1, \dotsc, s_m\}$ and $\{x_1, \dotsc, x_m\}$ are two arrangements of the elements of $G$ with associated blocks $\Delta_T(G;s)$ and $\Delta_T(G;x)$ respectively, then $U^*\Delta_T(G; x)U=\Delta_T(G; s)$, where $U$ is the permutation matrix corresponding to the permutation $x_i = s_{\sigma{i}}$, $i=1, \dots , m$. Then it follows from Lemma \ref{V^*TV} that $\Delta_T(G;s)$ is positive if and only if $\Delta_T(G;x)$ is positive.
	\end{rem}

Let us find an explicit form of $\Delta_T(G)$ for the following groups.
	\begin{eg}[Positive definite function on finite cyclic groups]
		Let $T: \mathbb{Z}_n \to \mathcal{B}(\mathcal{H})$ be a positive
		definite function. Let us denote by $T(k)=T_k$ for $k=0,1, \dotsc, n-1$. We consider two different cases depending on whether $n$ is odd or even. 
		\begin{enumerate}\label{+ Z_n}
			\item Let $n$ be odd and let $m=(n-1)\slash 2$. Then the associated block matrix $\Delta_T$ is given by
			\[ 
			\Delta_T(\mathbb{Z}_n)=\begin{bmatrix}
				
				T_0 & T_1 & T_2 & \cdots & T_m & T_{m}^{*} & T_{m-1}^{*} & \cdots & T_{1}^{*}\\
				
				T_{1}^{*} & T_0 & T_1 & \cdots & T_{m-1} & T_{m} & T_{m}^{*} &\cdots & T_{2}^{*}\\
				
				T_{2}^{*} & T_{1}^{*} & T_0 & \cdots & T_{m-2} & T_{m-1} & T_{m} &
				\cdots & T_{3}^{*}\\
				
				\vdots & \vdots & \vdots & \ddots & \vdots & \vdots & \vdots &\ddots &\vdots\\
				
				T_1 & T_2 & T_3 &\cdots & T_{m}^{*} & T_{m-1}^{*} & T_{m-2}^{*} &\cdots & T_0
				
			\end{bmatrix}_{n\times n}.
			\]
			
			\item Let $n$ be even and let $m=n\slash 2$. Here, the associated block matrix $\Delta_T$ is given by
			\[ 
			\Delta_T(\mathbb{Z}_n)=\begin{bmatrix}
				
				T_0 & T_1 & T_2 & \cdots & T_{m-1} & T_{m} & T_{m-1}^{*} & \cdots & T_{1}^{*}\\
				
				T_{1}^{*} & T_0 & T_1 & \cdots & T_{m-2} & T_{m-1} & T_{m} &\cdots & T_{2}^{*}\\
				
				T_{2}^{*} & T_{1}^{*} & T_0 & \cdots & T_{m-3} & T_{m-2} & T_{m-1} &
				\cdots & T_{3}^{*}\\
				
				\vdots & \vdots & \vdots & \ddots & \vdots & \vdots & \vdots &\ddots &\vdots\\
				
				T_1 & T_2 & T_3 &\cdots & T_{m}^{*} & T_{m-1}^{*} & T_{m-2}^{*} &\cdots & T_0
				
			\end{bmatrix}_{n\times n}.
			\]
		\end{enumerate}	
		In either case, it follows from Proposition \ref{+ finite} that $T$ is positive
		definite if and only if $\Delta_T(\mathbb{Z}_n) \geq 0$. \qed
		
	\end{eg}
	\begin{eg}[Positive definite function on dihedral groups] Let $D_n$ be the dihedral group of order $2n$ which has the following representation.
		\[
		D_n=\langle r,s \ | \ r^n=s^2=(sr)^2=e \rangle.
		\]
		Let $T: D_n \to \mathcal{B}(\mathcal{H})$ be positive definite. Set $T(r^m)=T_m$ and $T(sr^m)=V_m$ for $ m=0,1, \dotsc, n-1$. A few steps of routine calculations yield 
		\[
		\Delta_T(D_n)=\begin{bmatrix}
			\Delta_T(\mathbb{Z}_n) & \Delta_V\\
			\Delta_V & \Delta_T(\mathbb{Z}_n)\\
		\end{bmatrix},
		\]
		where $\Delta_T(\mathbb{Z}_n)$ is the block matrix given in Example \ref{+ Z_n} and $\Delta_V$ is given by
		\[
		\Delta_V=\begin{bmatrix}
			V_0 & V_1 & \cdots & V_{n-2} & V_{n-1}\\
			V_1 & V_2 & \cdots & V_{n-1} & V_{0}\\
			
			\vdots & \vdots &  \cdots & \vdots & \vdots\\
			V_{n-1} & V_0 & \cdots & V_{n-3} & V_{n-2}\\
		\end{bmatrix}_{n\times n}.
		\]
		Since $T$ is positive definite, we have 
		\[
		V_m=T(sr^m)=T((sr^m)^{-1})=T(sr^m)^*=V_m^* \, \quad \text{ for } \quad 0 \leq m \leq n-1.
		\]
	\qed	
		
		\end{eg}
Now we prove that the positive definiteness of an operator-valued function $T$  is preserved under group isomorphism. This gives us a way to consider only positive definite functions on any isomorphic copy of a group. 
	\begin{lem}
		Let $T: G \to \mathcal{B}(\mathcal{H})$ be a positive definite function and let $\phi:G_0 \to G $ be an isomorphism of groups. Then the map 
$
		T_\phi: G_0 \to \mathcal{B}(\mathcal{H})$ given by  $T_\phi(g_0)=T(\phi(g_0))$
		is positive definite.
	\end{lem}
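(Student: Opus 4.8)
The plan is to verify directly the two defining properties of a positive definite function for $T_\phi$, mirroring the structure of the proof of Lemma \ref{V^*TV}. Since $\phi$ is a group homomorphism it preserves inverses, so $\phi(g_0^{-1}) = \phi(g_0)^{-1}$ for every $g_0 \in G_0$. Combining this with the symmetry $T(s^{-1}) = T(s)^*$ enjoyed by $T$, I would obtain at once
\[
T_\phi(g_0^{-1}) = T(\phi(g_0^{-1})) = T(\phi(g_0)^{-1}) = T(\phi(g_0))^* = T_\phi(g_0)^*,
\]
which establishes the required symmetry condition for $T_\phi$.

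For the positivity of the associated double sum, the idea is to transport an arbitrary test function on $G_0$ to one on $G$ by means of the bijection $\phi$. Given $k \in c_{00}(G_0, \HS)$, I would set $h = k \circ \phi^{-1} : G \to \HS$; since $\phi^{-1}$ is a bijection, $h$ has finite support and therefore lies in $c_{00}(G, \HS)$. The computation then reduces to a change of variables: using $\phi(b^{-1}a) = \phi(b)^{-1}\phi(a)$ and substituting $s = \phi(a)$, $t = \phi(b)$, I would arrive at
\[
\sum_{a, b \in G_0} \langle T_\phi(b^{-1}a) k(a), k(b)\rangle = \sum_{s, t \in G} \langle T(t^{-1}s) h(s), h(t)\rangle,
\]
and the right-hand side is nonnegative precisely because $T$ is positive definite.

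There is no serious obstacle here, as the argument is entirely formal. The only point demanding a little care is the bookkeeping in the reindexing step, namely checking that as $(a,b)$ ranges over $G_0 \times G_0$ the pair $(\phi(a), \phi(b))$ ranges bijectively over $G \times G$, and that the transported function $h$ genuinely has finite support. Both facts follow immediately from $\phi$ being a bijective homomorphism, so the conclusion is direct.
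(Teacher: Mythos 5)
Your proposal is correct and follows essentially the same route as the paper's proof: both verify the symmetry condition via $\phi(g_0^{-1})=\phi(g_0)^{-1}$ and then establish positivity by transporting the test function to $c_{00}(G,\mathcal{H})$ via composition with $\phi^{-1}$ and reindexing the double sum using the bijectivity of $\phi$. Your write-up is in fact slightly more careful than the paper's, since you make the change-of-variables bookkeeping explicit, but the underlying argument is identical.
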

	\begin{proof}
Since $\phi$ is a group homomorphism, $T_\phi(g_0^{-1})=T_\phi(g_0)^*$.
For every $h\in c_{00}(G_0,\mathcal{H})$,we have
\begin{equation*}
	\begin{split}
		\sum_{s,t\in G_0}\langle T_\phi(s^{-1}t) h(t),h(s)\rangle
		&=\sum_{s,t\in
			G_0}\langle  T(\phi(s^{-1}t)) h(t),h(s)\rangle\\
		&=\sum_{s,t\in
			G_0}\langle  T\left(\phi(s)^{-1}\phi(t)\right) h(t),h(s)\rangle\\
		&=\sum_{\phi(s), \phi(t)\in
			G}\langle  T\left(\phi(s)^{-1}\phi(t)\right) (h\circ\phi^{-1})\phi(t),(h\circ \phi^{-1})\phi(s)\rangle\\
			& \geq 0.\\
	\end{split}
\end{equation*}
The last inequality follows from the facts that for $h \in c_{00}(G_0, \mathcal{H)}$ the map $s \mapsto (h\circ\phi^{-1})(s)$ is in $c_{00}(G, \mathcal{H})$ and $T$ is a positive definite function.
	\end{proof}
 Next, we recollect some results from the literature that are crucial for this section. 	
\begin{thm}[\cite{Foias and Frazho}, Chapter XVI, Theorem 1.1]\label{2*2I}
	Consider the block matrix $T$ defined by 
	\[
	\begin{bmatrix}
		A & B \\
		B^* & C
	\end{bmatrix} \quad \mbox{on } \ \ \mathcal{H}_1 \oplus \mathcal{H}_2 \ ,
	\]
where $A, B$ and $C$ are operators acting between the appropriate spaces. The operator $T$ is (strictly) positive if and only if $A$	and $C$ are both (strictly) positive and there is a (strict) contraction $\Gamma$ mapping $\overline{C\mathcal{H}_2}$ into $\overline{A\mathcal{H}_1}$ satisfying 
\[
B=A^{1\slash2}\ \Gamma \ C^{1\slash 2}.
\]
\end{thm}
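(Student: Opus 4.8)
The plan is to prove the two implications separately: sufficiency is a one-line completion of squares once the factorization of $B$ is granted, while necessity rests on a Cauchy--Schwarz-type estimate followed by a Douglas-type argument that manufactures the contraction $\Gamma$ on the range closures. Throughout I will write a generic vector of $\mathcal{H}_1\oplus\mathcal{H}_2$ as $h=h_1\oplus h_2$ and use that $\langle B^*h_1,h_2\rangle=\overline{\langle Bh_2,h_1\rangle}$, so that $\langle Th,h\rangle=\langle Ah_1,h_1\rangle+2\,\mathrm{Re}\langle Bh_2,h_1\rangle+\langle Ch_2,h_2\rangle$.

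First I would treat sufficiency. Assume $A,C\geq 0$ and $B=A^{1/2}\Gamma C^{1/2}$ with $\|\Gamma\|\leq 1$. Setting $u=A^{1/2}h_1$ and $v=C^{1/2}h_2$ and using that $A^{1/2}$ is self-adjoint, one gets $\langle Bh_2,h_1\rangle=\langle \Gamma v,u\rangle$, hence $\langle Th,h\rangle=\|u\|^2+2\,\mathrm{Re}\langle\Gamma v,u\rangle+\|v\|^2$. Since $\|\Gamma v\|\leq\|v\|$, the middle term is at least $-2\|u\|\,\|v\|$, so $\langle Th,h\rangle\geq(\|u\|-\|v\|)^2\geq 0$ and $T\geq 0$. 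For the strict version, $\|\Gamma\|<1$ turns this into a genuinely positive lower bound for $h\neq 0$ after one extra elementary estimate on the cross term.

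For necessity, testing $T\geq 0$ on $h_1\oplus 0$ and on $0\oplus h_2$ gives $A\geq 0$ and $C\geq 0$ immediately. The crucial quantitative step is the inequality $|\langle Bh_2,h_1\rangle|^2\leq\langle Ah_1,h_1\rangle\,\langle Ch_2,h_2\rangle=\|A^{1/2}h_1\|^2\,\|C^{1/2}h_2\|^2$, which I would obtain by substituting $h_1\mapsto t e^{i\theta}h_1$ into $\langle Th,h\rangle\geq 0$, choosing $\theta$ so the cross term becomes real and equal to $-|\langle Bh_2,h_1\rangle|$, and then minimizing the resulting real quadratic in $t>0$ (a discriminant computation). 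With this in hand I build $\Gamma$: since $A,C\geq 0$ we have $\overline{A\mathcal{H}_1}=\overline{A^{1/2}\mathcal{H}_1}$ and $\overline{C\mathcal{H}_2}=\overline{C^{1/2}\mathcal{H}_2}$, and I define a sesquilinear form on $A^{1/2}\mathcal{H}_1\times C^{1/2}\mathcal{H}_2$ by $[A^{1/2}h_1,C^{1/2}h_2]:=\langle Bh_2,h_1\rangle$. The displayed inequality shows this is well defined (if $A^{1/2}h_1=A^{1/2}h_1'$, then $\|A^{1/2}(h_1-h_1')\|=0$ forces the two values to agree, and symmetrically in the second slot) and bounded by $\|u\|\,\|v\|$, so it extends to the closures with the same bound. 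The Riesz representation theorem then produces a contraction $\Gamma:\overline{C\mathcal{H}_2}\to\overline{A\mathcal{H}_1}$ with $[u,v]=\langle\Gamma v,u\rangle$, and unwinding the definitions yields $\langle A^{1/2}\Gamma C^{1/2}h_2,h_1\rangle=\langle Bh_2,h_1\rangle$ for all $h_1,h_2$, i.e. $B=A^{1/2}\Gamma C^{1/2}$.

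The hard part is precisely this construction of $\Gamma$: because $A^{1/2}$ and $C^{1/2}$ need be neither injective nor of closed range, the form $[\cdot,\cdot]$ could a priori fail to be well defined on the ranges, and it is exactly the Cauchy--Schwarz inequality above that rescues the argument by forcing the relevant expressions to vanish on the kernels of $A^{1/2}$ and $C^{1/2}$. The strict statement I would finally dispatch by observing that strict positivity of $T$ forces $A$ and $C$ to be bounded below, hence invertible, so that $A^{1/2},C^{1/2}$ are invertible, $\Gamma=A^{-1/2}BC^{-1/2}$ is defined on all of $\mathcal{H}_2$, and tracking the strict inequalities through the quadratic estimate shows $\|\Gamma\|<1$.
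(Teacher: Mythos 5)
The paper does not prove this statement at all: it is imported verbatim from Foias--Frazho (Chapter XVI, Theorem 1.1) and used as a black box, so there is no in-paper argument to compare against. Your proof is correct and self-contained, and it is essentially the standard argument for this factorization result: the completion-of-squares estimate $\langle Th,h\rangle=\|u\|^2+2\,\mathrm{Re}\langle\Gamma v,u\rangle+\|v\|^2\geq(\|u\|-\|v\|)^2$ for sufficiency, and for necessity the generalized Cauchy--Schwarz inequality $|\langle Bh_2,h_1\rangle|^2\leq\langle Ah_1,h_1\rangle\langle Ch_2,h_2\rangle$ followed by the construction of $\Gamma$ as a bounded sesquilinear form on the range closures. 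The key subtlety --- that $A^{1/2}$ and $C^{1/2}$ may fail to be injective, so well-definedness of the form must be extracted from the Cauchy--Schwarz bound --- is exactly the right point to isolate, and your kernel argument handles it. Two small remarks. First, in the quadratic argument the degenerate case $\langle Ah_1,h_1\rangle=0$ is not literally a ``discriminant computation'': there the function of $t$ is affine, and nonnegativity for all $t$ forces the cross term to vanish; this case is precisely the one your well-definedness argument later relies on, so it deserves an explicit sentence. Second, your strict-case argument tacitly reads ``strictly positive'' as ``bounded below by $\epsilon I$'' (so that $A$, $C$ become invertible and $\Gamma=A^{-1/2}BC^{-1/2}$ makes sense globally); that is indeed the convention of Foias--Frazho and is consistent with how the paper uses the strict case (e.g.\ in Corollary \ref{2 by 2}, where $T(0)^{-1/2}$ appears), but it is worth stating, since under the weaker pointwise reading $\langle Th,h\rangle>0$ for $h\neq 0$ the invertibility step would fail.
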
	
 Also, we have the following result.
\begin{prop}[\cite{Kian}, Proposition 5.18]\label{2*2II}
	Let $A, B \in \mathcal{B}(\mathcal{H})$ be such that $B$ is self-adjoint and $A$ is positive. Then 
		\[
	\begin{bmatrix}
		A & B \\
		B & A
	\end{bmatrix} \geq 0 \quad \mbox{if and only if} \quad \pm B \leq A.
	\]
\end{prop}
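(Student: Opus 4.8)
The plan is to reduce the block-matrix positivity condition to the two simplest cases and apply the criterion of Theorem~\ref{2*2I}. Write $\mathcal{M} = \begin{bmatrix} A & B \\ B & A \end{bmatrix}$ acting on $\mathcal{H}\oplus\mathcal{H}$, where $A \geq 0$ and $B = B^*$. Since in this symmetric situation the off-diagonal entry equals its own adjoint, Theorem~\ref{2*2I} tells us that $\mathcal{M} \geq 0$ if and only if there is a contraction $\Gamma : \overline{A\mathcal{H}} \to \overline{A\mathcal{H}}$ with $B = A^{1/2}\,\Gamma\, A^{1/2}$. I would first dispose of the forward direction ($\mathcal{M}\geq 0 \implies \pm B \leq A$) directly: for any $x \in \mathcal{H}$, testing $\mathcal{M}$ against the vectors $(x, x)$ and $(x, -x)$ in $\mathcal{H}\oplus\mathcal{H}$ gives
\[
0 \leq \langle \mathcal{M}(x,\pm x),(x,\pm x)\rangle = 2\langle Ax,x\rangle \pm 2\langle Bx,x\rangle,
\]
which is exactly the statement $\langle (A \mp B)x, x\rangle \geq 0$ for all $x$, i.e. $\pm B \leq A$. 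This elementary two-vector test is clean and avoids invoking the factorization theorem at all for one implication.

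For the converse ($\pm B \leq A \implies \mathcal{M}\geq 0$), I would again use the $(x,y)$-test but now for general $x, y \in \mathcal{H}$. We have
\[
\langle \mathcal{M}(x,y),(x,y)\rangle = \langle Ax,x\rangle + \langle By,x\rangle + \langle Bx,y\rangle + \langle Ay,y\rangle,
\]
and since $B$ is self-adjoint the two middle terms combine to $2\,\mathrm{Re}\,\langle Bx,y\rangle$. The target is to show this expression is nonnegative whenever $\pm B \leq A$. The natural route is to bound the cross term by the diagonal terms. The hypothesis $\pm B \leq A$ gives control of $\langle Bx,x\rangle$ by $\langle Ax,x\rangle$, so the key is to pass from the diagonal bound $|\langle Bu,u\rangle| \leq \langle Au,u\rangle$ to a bound on the polarized quantity $\mathrm{Re}\,\langle Bx,y\rangle$. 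Since $A \pm B \geq 0$, both $A+B$ and $A-B$ are positive operators and hence define semi-inner products; I would apply the Cauchy--Schwarz inequality to the positive form associated with, say, $A - B$ (or exploit both forms symmetrically). Concretely, $A + B \geq 0$ yields $2\,\mathrm{Re}\,\langle Bx,y\rangle + \langle Ax,x\rangle + \langle Ay,y\rangle \geq -\langle B(x-y),(x-y)\rangle + \big(\langle Ax,x\rangle + \langle Ay,y\rangle - \langle A(x-y),(x-y)\rangle\big)$ after regrouping, and a short algebraic rearrangement using $\langle (A-B)(x-y),(x-y)\rangle \geq 0$ collapses the whole expression into a manifestly nonnegative combination.

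The main obstacle I anticipate is the converse direction's cross-term estimate: the diagonal hypothesis $\pm B \leq A$ only controls $\langle Bu,u\rangle$ for a single vector $u$, whereas the positivity of $\mathcal{M}$ requires controlling the genuinely bilinear quantity $\mathrm{Re}\,\langle Bx,y\rangle$ for independent $x,y$. The cleanest way to bridge this gap, and the step I would be most careful with, is to avoid a clumsy polarization and instead observe directly that
\[
\langle \mathcal{M}(x,y),(x,y)\rangle = \tfrac{1}{2}\langle (A+B)(x+y),(x+y)\rangle + \tfrac{1}{2}\langle (A-B)(x-y),(x-y)\rangle.
\]
Once this identity is verified by expanding the right-hand side (the $\langle Bx,x\rangle$ and $\langle By,y\rangle$ terms cancel, leaving precisely the four terms above), the converse is immediate: both summands on the right are nonnegative exactly because $A+B \geq 0$ and $A-B \geq 0$, which is the hypothesis $\pm B \leq A$. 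This identity simultaneously proves both directions and sidesteps Theorem~\ref{2*2I} entirely, so I would present it as the backbone of the argument, using the decomposition into the positive forms $A+B$ and $A-B$ as the single decisive computation.
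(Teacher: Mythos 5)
Your proof is correct, and it takes a genuinely different route from the paper: the paper gives \emph{no} proof of this proposition at all, quoting it as Proposition 5.18 of the cited work of Kian, Moslehian and Xu, so your argument supplies a self-contained elementary proof of something the paper outsources. Your backbone identity does check out: writing $\mathcal{M}=\begin{bmatrix} A & B\\ B & A\end{bmatrix}$, expansion of
\[
\tfrac{1}{2}\langle (A+B)(x+y),(x+y)\rangle+\tfrac{1}{2}\langle (A-B)(x-y),(x-y)\rangle
\]
makes the terms $\langle Ax,y\rangle$, $\langle Ay,x\rangle$, $\langle Bx,x\rangle$, $\langle By,y\rangle$ cancel in pairs, leaving exactly $\langle Ax,x\rangle+\langle Ay,y\rangle+\langle Bx,y\rangle+\langle By,x\rangle=\langle \mathcal{M}(x,y),(x,y)\rangle$ (the identity is purely algebraic and needs no self-adjointness). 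Given the identity, $\pm B\leq A$ makes the right-hand side manifestly nonnegative, and conversely $y=\pm x$ gives $\langle (A\pm B)x,x\rangle\geq 0$ for all $x$, hence $A\pm B\geq 0$ since these operators are self-adjoint (modulo a harmless $\pm/\mp$ bookkeeping slip in your forward direction, both signs are covered). In operator language your identity is precisely the quadratic form of the unitary congruence
\[
W^*\begin{bmatrix} A & B\\ B & A\end{bmatrix}W=\begin{bmatrix} A+B & 0\\ 0 & A-B\end{bmatrix},\qquad W=\tfrac{1}{\sqrt 2}\begin{bmatrix} I_\HS & I_\HS\\ I_\HS & -I_\HS\end{bmatrix},
\]
which is the standard proof of this fact; what it buys is a completely elementary argument that bypasses the factorization machinery of Theorem \ref{2*2I}, whereas the paper's surrounding results (e.g.\ the $\mathbb{Z}_4$ and $\mathbb{Z}_2\oplus\mathbb{Z}_2$ theorems) lean on that heavier structure theorem in combination with this proposition. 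Two cosmetic points: the Cauchy--Schwarz/regrouping discussion in your middle paragraph is muddled and entirely superseded by the final identity, so it should simply be deleted; likewise the appeal to Theorem \ref{2*2I} in your opening paragraph is never used and can go.
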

 
Now we  discuss positive definite functions on some small groups. 
	\subsection{Positive definite functions on group of order 2}
	
	Every group of order $2$ is isomorphic to $\mathbb{Z}_{2}.$ Therefore, we characterize positive definite functions on $\mathbb{Z}_2.$ The following results follow directly from Example \ref{+ Z_n}, Theorem \ref{2*2I} and  Proposition \ref{2*2II}.
	\begin{prop}\label{+ on Z2}
		Let $T\colon\mathbb{Z}_{2}\to\mathcal{B}(\mathcal{H})$ be a function such that $T(0)$ and $T(1)$ are self-adjoint operators. Then the following are equivalent.
		\begin{enumerate}
			\item $T$ is a positive definite function;
			\item The block matrix \[ T=\begin{bmatrix}
				T(0) & T(1) \\
				T(1) & T(0)  \\
			\end{bmatrix}_{2\times 2}\] is a bounded positive operator on $\mathcal{B}(\mathcal{H}^{2});$ 
		\item There is a contraction $\Gamma$ mapping $\overline{T(0)(\mathcal{H})}$ into $\overline{T(1)(\mathcal{H})}$ such that 
		\[
		T(1)=T(0)^{1\slash 2 } \ \Gamma \ T(0)^{1\slash 2};
		\]
		
		\item $T(0)$ is positive and $\pm T(1)\leq T(0).$
		\end{enumerate}		
	\end{prop}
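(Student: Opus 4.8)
The plan is to treat statement (2) as the central hub and link each of (1), (3) and (4) to it, since the block matrix appearing in (2) is exactly the object controlled by the cited structure theorems. First I would establish (1) $\Leftrightarrow$ (2). By the definition of positive definiteness, any positive definite $T$ satisfies $T(s^{-1})=T(s)^*$; for $\mathbb{Z}_2$ this forces $T(0)=T(0)^*$ and $T(1)=T(1)^*$, matching the standing self-adjointness hypothesis. Since $\mathbb{Z}_2$ is finite, Proposition \ref{+ finite} says $T$ is positive definite if and only if the single block matrix $\Delta_T(\mathbb{Z}_2)$ is positive on $\mathcal{H}^2$. Specializing the even case ($n=2$, $m=1$) of Example \ref{+ Z_n} and using $T(1)^*=T(1)$, this matrix is exactly the $2\times 2$ array displayed in (2), so (1) and (2) are literally the same assertion.

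Next I would deduce (2) $\Leftrightarrow$ (3) by applying Theorem \ref{2*2I} with $A=C=T(0)$ and $B=T(1)$. Because $T(1)$ is self-adjoint we have $B^*=B$, so the matrix in (2) has the form treated there, and the theorem gives positivity precisely when $T(0)$ is positive and $T(1)=T(0)^{1/2}\,\Gamma\,T(0)^{1/2}$ for a contraction $\Gamma$ between the relevant closed ranges; this is exactly condition (3).

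For (2) $\Leftrightarrow$ (4) I would invoke Proposition \ref{2*2II} with $A=T(0)$ and $B=T(1)$. The one point needing care is that Proposition \ref{2*2II} presupposes $A\ge 0$. Passing from (2) to (4) this comes for free, since positivity of the full $2\times 2$ block matrix forces positivity of its diagonal block $T(0)$ by compressing to $\mathcal{H}\oplus\{0\}$; passing from (4) to (2) the positivity of $T(0)$ is already part of the hypothesis. With $A\ge 0$ secured, Proposition \ref{2*2II} yields that the matrix $\left[\begin{smallmatrix} T(0) & T(1)\\ T(1) & T(0)\end{smallmatrix}\right]$ is positive if and only if $\pm T(1)\le T(0)$, which is (4).

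None of these steps is genuinely hard once the cited results are in place, so the work is essentially bookkeeping. The only things to watch are verifying the self-adjointness hypotheses so that the off-diagonal entry equals its own adjoint, and extracting $T(0)\ge 0$ before calling on Proposition \ref{2*2II}. Accordingly, the main (mild) obstacle is simply confirming that the block matrix produced by Example \ref{+ Z_n} literally satisfies the hypotheses of Theorem \ref{2*2I} and Proposition \ref{2*2II}, after which the four equivalences chain together immediately.
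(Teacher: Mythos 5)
Your proposal is correct and follows essentially the same route as the paper, which simply remarks that the proposition ``follows directly from Example \ref{+ Z_n}, Theorem \ref{2*2I} and Proposition \ref{2*2II}''; you use exactly these three ingredients, with (2) as the hub. The only additions are details the paper leaves implicit (extracting $T(0)\geq 0$ by compression before invoking Proposition \ref{2*2II}, and checking the self-adjointness hypotheses), which are handled correctly.
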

	
	\begin{cor}\label{+ on Z_2II}

		Let $T\colon\mathbb{Z}_{2}\to\mathcal{B}(\mathcal{H})$ be such that $T(0)=I_{\mathcal{H}}$ and $T(1)=T(1)^*$. Then $T$ is positive definite if and only if $\|T(1)\| \leq 1$.
	
\end{cor}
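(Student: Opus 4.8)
The plan is to read the statement off directly from the equivalence (1) $\Lra$ (4) in Proposition \ref{+ on Z2}, which already does the heavy lifting, and then to supply the one elementary spectral fact that converts the order condition into a norm condition. Since the hypotheses here are exactly those of Proposition \ref{+ on Z2} (both $T(0)=I_\HS$ and $T(1)$ are self-adjoint), no new positivity analysis is required; the work is purely in translating the conclusion.

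First I would specialize Proposition \ref{+ on Z2} to the present setting. By that proposition, $T$ is positive definite if and only if $T(0)$ is positive and $\pm T(1)\leq T(0)$. Here $T(0)=I_\HS$ is automatically positive, so the criterion collapses to the single requirement
\[
-I_\HS \leq T(1) \leq I_\HS.
\]
Thus it remains only to show that, for the self-adjoint operator $A:=T(1)$, this order sandwich is equivalent to $\|A\|\leq 1$.

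The remaining step is the standard characterization of the norm of a self-adjoint operator. I would use that for self-adjoint $A$ one has $\|A\|=\sup_{\|x\|=1}|\langle Ax,x\rangle|$. If $\pm A\leq I_\HS$, then $|\langle Ax,x\rangle|\leq \|x\|^2$ for every $x$, and taking the supremum over unit vectors gives $\|A\|\leq 1$. Conversely, if $\|A\|\leq 1$, then by Cauchy--Schwarz $|\langle Ax,x\rangle|\leq \|A\|\,\|x\|^2\leq \|x\|^2$, so $\langle (I_\HS\mp A)x,x\rangle\geq 0$ for all $x$, i.e. $\pm A\leq I_\HS$. Combining this with the reduction above completes the proof.

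There is no genuine obstacle in this argument: every substantive claim has been established earlier (Proposition \ref{+ on Z2}), and the only additional ingredient is a textbook identity for self-adjoint operators. If one preferred to avoid quoting the $\sup$-formula, the same conclusion follows from the fact that for self-adjoint $A$ the spectrum is real and $\|A\|$ equals the spectral radius, so $-I_\HS\leq A\leq I_\HS$ exactly says $\sigma(A)\subseteq[-1,1]$, which is equivalent to $\|A\|\leq 1$.
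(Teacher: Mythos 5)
Your proof is correct, but it routes through Proposition \ref{+ on Z2} differently than the paper does. The paper uses the factorization criterion, item (3): there is a contraction $\Gamma$ with $T(1)=T(0)^{1/2}\,\Gamma\,T(0)^{1/2}$, and since $T(0)=I_\HS$ this forces $\Gamma=T(1)$, so positive definiteness is immediately equivalent to $T(1)$ being a contraction --- a one-line argument with nothing left to check. You instead use the order criterion, item (4): $T(0)\geq 0$ and $\pm T(1)\leq T(0)$, which specializes to $-I_\HS\leq T(1)\leq I_\HS$, and you then supply the standard fact that for a self-adjoint operator $A$ the sandwich $\pm A\leq I_\HS$ is equivalent to $\|A\|\leq 1$ (via $\|A\|=\sup_{\|x\|=1}|\langle Ax,x\rangle|$, or equivalently via the spectral radius). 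Both are legitimate specializations of the same proposition, but they rest on different underlying block-matrix results: item (3) descends from Theorem \ref{2*2I} (the Foias--Frazho factorization), item (4) from Proposition \ref{2*2II}. What your route buys: it never touches the operator $\Gamma$ or its domain and range (which the paper's statement of item (3) records somewhat loosely), at the cost of one extra textbook lemma translating an operator inequality into a norm bound. What the paper's route buys: brevity, since the identification $\Gamma=T(1)$ makes the equivalence instantaneous.
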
	

		\begin{proof}
It follows from Proposition \ref{+ on Z2} that $T$ is positive definite if and only if there is a contraction $\Gamma$ such that $T(1)=T(0)^{1\slash 2 }\ \Gamma \ T(0)^{1\slash 2}=\Gamma$ which is possible if and only if $T(1)$ is a contraction since $T(0)=I_\HS.$ 
		\end{proof}

	\begin{cor}\label{2 by 2}
		Let $T\colon\mathbb{Z}_{2}\to\mathcal{B}(\mathcal{H})$ be such that $T(0)$ and $T(1)$ are self-adjoint. Then $T$ is strictly positive definite
		if and only if $T(0)$ is strictly positive and $
		  T(0)^{-1\slash2} T(1) T(0)^{-1\slash2}
		  $
		   is a strict contraction.
		\begin{proof}
			The desired conclusion follows from Theorem \ref{2*2I}.
		\end{proof}
	\end{cor}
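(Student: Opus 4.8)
The plan is to reduce strict positive definiteness of $T$ to the strict positivity of a single $2\times 2$ operator block, and then read off the criterion from the factorization theorem, Theorem \ref{2*2I}. First I would note that $\mathbb{Z}_2=\{0,1\}$ with $1=1^{-1}$, so by Proposition \ref{+ finite} (or equivalently the computation leading to Proposition \ref{+ on Z2}) the positive definiteness of $T$ is governed by the single block
\[
\Delta_T(\mathbb{Z}_2)=\begin{bmatrix} T(0) & T(1) \\ T(1) & T(0) \end{bmatrix}\quad\text{on } \mathcal{H}\oplus\mathcal{H},
\]
where I have used the hypothesis $T(1)=T(1)^*$ to write both off-diagonal entries as $T(1)$. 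Thus $T$ is strictly positive definite if and only if $\Delta_T(\mathbb{Z}_2)$ is a strictly positive operator on $\mathcal{H}\oplus\mathcal{H}$.

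Next I would apply Theorem \ref{2*2I} with $A=C=T(0)$ and $B=B^*=T(1)$. The theorem tells us that $\Delta_T(\mathbb{Z}_2)$ is strictly positive precisely when $T(0)$ is strictly positive and there exists a strict contraction $\Gamma$ mapping $\overline{T(0)\mathcal{H}}$ into $\overline{T(0)\mathcal{H}}$ with
\[
T(1)=T(0)^{1/2}\,\Gamma\,T(0)^{1/2}.
\]
So the whole task becomes translating the existence of such a strict contraction $\Gamma$ into the concrete assertion that $T(0)^{-1/2}T(1)T(0)^{-1/2}$ is a strict contraction.

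The key observation making this translation clean is that strict positivity of $T(0)$ means $T(0)$ is bounded below, hence invertible, so $T(0)^{1/2}$ is invertible and $\overline{T(0)\mathcal{H}}=\mathcal{H}$. Consequently the factorization equation can be inverted, forcing $\Gamma$ to equal $T(0)^{-1/2}T(1)T(0)^{-1/2}$ as an operator on all of $\mathcal{H}$; there is no freedom in the choice of $\Gamma$. Therefore the existence of a strict contraction $\Gamma$ satisfying the factorization is equivalent to the explicit operator $T(0)^{-1/2}T(1)T(0)^{-1/2}$ being itself a strict contraction, which yields the claimed equivalence.

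I expect the only genuinely delicate point to be this last invertibility step: one must verify that when $T(0)$ is strictly positive the closed ranges appearing in Theorem \ref{2*2I} fill all of $\mathcal{H}$, so that $\Gamma$ is defined on the whole space and is uniquely pinned down by the factorization. Once this is in hand, the remainder is a direct substitution. (As a harmless aside, self-adjointness of $T(0)^{-1/2}T(1)T(0)^{-1/2}$ follows from that of $T(1)$, so no separate symmetry check is needed.)
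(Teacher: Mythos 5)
Your proposal is correct and follows essentially the same route as the paper, whose entire proof is the one-line citation of Theorem \ref{2*2I}: you have simply unpacked that citation, taking $A=C=T(0)$, $B=T(1)$ in the block $\Delta_T(\mathbb{Z}_2)$ and noting that strict positivity of $T(0)$ makes $T(0)^{1/2}$ invertible, so the contraction $\Gamma$ in the factorization is forced to be $T(0)^{-1/2}T(1)T(0)^{-1/2}$. The invertibility observation you flag as the delicate point is exactly the detail the paper leaves implicit, and it is handled correctly in your argument.
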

	
	\subsection{Positive definite functions on a group of order 3}
	
	We wish to provide necessary and sufficient conditions for an operator-valued map acting on a group of order $3$ to be positive definite. Since every group of order $3$ is isomorphic to $\mathbb{Z}_{3}$, it suffices to study the positive definite functions on $\mathbb{Z}_3$. We begin with the following result.

\begin{thm}[\cite{Foias and Frazho}, Chapter XVI, Theorem 3.1] \label{+ on 3*3}
	Consider the block matrix $T$ defined by 
	\[
	\begin{bmatrix}
		A & B & R\\
		B^* & C & B'\\
		R^* & B'^* & D
	\end{bmatrix} \quad \mbox{on} \ \mathcal{H}_1 \oplus \mathcal{H}_2 \oplus \mathcal{H}_3
	\]
	where $A, B, B', C, D$ and $R$ are operators acting between the appropriate spaces. The operator $T$ is (strictly) positive if and only if the operators 
\[
	\begin{bmatrix}
		A & B \\
		B^* & C
	\end{bmatrix} \quad \mbox{on} \ \mathcal{H}_1 \oplus \mathcal{H}_2, \quad 
\begin{bmatrix}
C & B' \\
B'^* & D
\end{bmatrix} \quad \mbox{on} \ \mathcal{H}_2 \oplus \mathcal{H}_3
	\]
are (strictly) positive and
\[
R=A^{1\slash 2}D_{\Gamma^*}\Gamma_RD_{\Gamma'}D^{1\slash 2}+A^{1\slash 2}\Gamma \Gamma' D^{1\slash2},
\]	
where $\Gamma_R$ is a contraction mapping $\mathfrak{D}_{\Gamma'}$ into $\mathfrak{D}_{\Gamma^*}, \Gamma$ is a contraction mapping $\overline{C\mathcal{H}_2}$ into $\overline{A\mathcal{H}_1}$ satisfying 
	\[
	B=A^{1\slash2}\Gamma C^{1\slash 2}.
	\]
and $\Gamma'$ is a contraction mapping $\overline{D\mathcal{H}_3}$ into $\overline{C\mathcal{H}_2}$ satisfying 
	\[
	B'=C^{1\slash2}\Gamma' D^{1\slash 2}.
	\]
\end{thm}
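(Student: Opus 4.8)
The plan is to reduce the positivity of the $3\times 3$ block matrix to an iterated application of the $2\times 2$ criterion in Theorem \ref{2*2I}, since the whole content of the statement is the parametrization of the corner $R$. First I would record necessity: the two displayed $2\times 2$ blocks are the compressions of $T$ to $\mathcal{H}_1\oplus\mathcal{H}_2$ and to $\mathcal{H}_2\oplus\mathcal{H}_3$, so if $T$ is (strictly) positive then so are they, and Theorem \ref{2*2I} furnishes the contractions $\Gamma$ and $\Gamma'$ together with the factorizations $B=A^{1/2}\Gamma C^{1/2}$ and $B'=C^{1/2}\Gamma' D^{1/2}$. The task is then to show that, given these, the completions $R$ making $T$ positive are exactly those of the asserted form.

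For the core argument I would work first in the strictly positive case, where $A,C,D$ and the defect operators are invertible. Regroup $T$ as the $2\times 2$ block matrix with diagonal entries $M=\begin{bmatrix} A & B \\ B^* & C\end{bmatrix}$ and $D$, and off-diagonal column $K=\begin{bmatrix} R \\ B'\end{bmatrix}$. By Theorem \ref{2*2I} (equivalently, the Schur-complement statement), $T>0$ if and only if $M>0$, $D>0$, and $D-K^*M^{-1}K>0$. The key device is the Cholesky-type factorization $M=LL^*$ with $L=\begin{bmatrix} A^{1/2} & 0 \\ C^{1/2}\Gamma^* & C^{1/2}D_\Gamma\end{bmatrix}$, which follows from $B=A^{1/2}\Gamma C^{1/2}$ together with $\Gamma^*\Gamma+D_\Gamma^2=I$. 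Inverting $L$ lets me write the Schur complement explicitly as $D-R^*A^{-1}R-(\Gamma' D^{1/2}-\Gamma^* A^{-1/2}R)^*D_\Gamma^{-2}(\Gamma' D^{1/2}-\Gamma^* A^{-1/2}R)$.

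Next I substitute the candidate $R=A^{1/2}(D_{\Gamma^*}\Gamma_R D_{\Gamma'}+\Gamma\Gamma')D^{1/2}$ and simplify, repeatedly using the intertwining identities $\Gamma D_\Gamma=D_{\Gamma^*}\Gamma$ and $\Gamma^* D_{\Gamma^*}=D_\Gamma\Gamma^*$ along with the defect relations $\Gamma^*\Gamma+D_\Gamma^2=I$ and $\Gamma\Gamma^*+D_{\Gamma^*}^2=I$. A direct, if lengthy, cancellation should collapse the entire Schur complement to $D^{1/2}D_{\Gamma'}(I-\Gamma_R^*\Gamma_R)D_{\Gamma'}D^{1/2}$, which is (strictly) positive precisely when $\Gamma_R$ is a (strict) contraction from $\mathfrak{D}_{\Gamma'}$ into $\mathfrak{D}_{\Gamma^*}$. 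Since each admissible $R$ determines a unique such $\Gamma_R$ and conversely, this is exactly the claimed parametrization, with the two inner $2\times 2$ positivities encoding the constraints on $\Gamma$ and $\Gamma'$ and the free parameter $\Gamma_R$ encoding the one genuine degree of freedom in the corner.

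The main obstacle is passing from the strictly positive case to the general positive case. There the operators $A^{1/2},C^{1/2},D^{1/2}$ and the defect operators need not have closed range, so $L^{-1}$, $D_\Gamma^{-1}$ and $A^{-1/2}$ are unavailable and the contractions $\Gamma,\Gamma',\Gamma_R$ live only on closures of ranges. I would handle this either by the perturbation $T+\varepsilon I$, applying the strict result and letting $\varepsilon\downarrow 0$ while controlling the resulting contractions, or by recasting every factorization through Douglas's range-inclusion lemma so that no inverses appear. Verifying that the limiting $\Gamma_R$ is still a contraction with the correct defect spaces, and that the identity $R=A^{1/2}(D_{\Gamma^*}\Gamma_R D_{\Gamma'}+\Gamma\Gamma')D^{1/2}$ survives the limit, is the delicate point.
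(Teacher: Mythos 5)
First, a point you could not have known but which frames this review: the paper does not prove this theorem at all. It is imported verbatim from Foias--Frazho (Chapter XVI, Theorem 3.1) as one of the results ``recollected from the literature'' for Section 3, so there is no in-paper argument to compare yours against; your proposal has to stand entirely on its own.

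On its own terms, your strictly positive case is correct. I checked the computation: with $M=\begin{bmatrix} A & B\\ B^{*} & C\end{bmatrix}=LL^{*}$ for $L=\begin{bmatrix} A^{1/2} & 0\\ C^{1/2}\Gamma^{*} & C^{1/2}D_{\Gamma}\end{bmatrix}$, substituting $R=A^{1/2}\bigl(D_{\Gamma^{*}}\Gamma_{R}D_{\Gamma'}+\Gamma\Gamma'\bigr)D^{1/2}$ into $D-K^{*}M^{-1}K$ and using $\Gamma D_{\Gamma}=D_{\Gamma^{*}}\Gamma$, $\Gamma^{*}\Gamma+D_{\Gamma}^{2}=I$ and $\Gamma\Gamma^{*}+D_{\Gamma^{*}}^{2}=I$ does collapse the Schur complement to $D^{1/2}D_{\Gamma'}\bigl(I-\Gamma_{R}^{*}\Gamma_{R}\bigr)D_{\Gamma'}D^{1/2}$, and in the invertible setting every $R$ arises this way from the unique $\Gamma_{R}=D_{\Gamma^{*}}^{-1}\bigl(A^{-1/2}RD^{-1/2}-\Gamma\Gamma'\bigr)D_{\Gamma'}^{-1}$, so the equivalence with $\Vert\Gamma_{R}\Vert<1$ is complete there.

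The genuine gap is exactly where you flagged it, and it cannot be deferred: the semidefinite case is the one this paper actually uses (Proposition \ref{+ on Z_3} applies the theorem with $A=C=D=T(0)$ merely positive, with genuinely proper defect spaces). Of your two proposed repairs, the perturbation $T+\varepsilon I$ fails as described. From $T+\varepsilon I>0$ you get parameters $\Gamma_{\varepsilon},\Gamma'_{\varepsilon},\Gamma_{R,\varepsilon}$, and contractivity lets you pass to a weak-operator convergent subnet; but the identity you must preserve in the limit, $R=A_{\varepsilon}^{1/2}D_{\Gamma_{\varepsilon}^{*}}\Gamma_{R,\varepsilon}D_{\Gamma'_{\varepsilon}}D_{\varepsilon}^{1/2}+A_{\varepsilon}^{1/2}\Gamma_{\varepsilon}\Gamma'_{\varepsilon}D_{\varepsilon}^{1/2}$, involves products of two and three weakly convergent factors, and WOT limits do not respect products; worse, $\Gamma\mapsto D_{\Gamma^{*}}=(I-\Gamma\Gamma^{*})^{1/2}$ is not WOT-continuous, so $D_{\Gamma_{\varepsilon}^{*}}$ need not converge to $D_{\Gamma^{*}}$ even when $\Gamma_{\varepsilon}\to\Gamma$; and for $\varepsilon>0$ the parameters act on the full spaces, so no weak-limit argument by itself produces an operator defined on $\mathfrak{D}_{\Gamma'}$ with range in $\mathfrak{D}_{\Gamma^{*}}$. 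Your second repair (Douglas range-inclusion, no inverses) is the right road, but it is not a patch to the strict-case proof --- it \emph{is} the proof. Concretely: apply Theorem \ref{2*2I} to your regrouping to get $K=M^{1/2}\Omega D^{1/2}$ with $\Omega$ a contraction into $\overline{M(\mathcal{H}_{1}\oplus\mathcal{H}_{2})}$; use the unitary between $\overline{\mathrm{ran}}\,M^{1/2}$ and $\overline{\mathrm{ran}}\,L^{*}$ (coming from $\Vert M^{1/2}x\Vert=\Vert L^{*}x\Vert$) to replace $M^{1/2}$ by $L$, writing $K=L\,\Omega' D^{1/2}$; then you are left with the column-completion problem of describing all contractions $\begin{bmatrix}\omega_{1}\\ \omega_{2}\end{bmatrix}$ whose rows satisfy $R=A^{1/2}\omega_{1}D^{1/2}$ and $\Gamma^{*}\omega_{1}+D_{\Gamma}\omega_{2}=\Gamma'$, forced by $B'=C^{1/2}\Gamma'D^{1/2}$. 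The displayed formula for $R$ is precisely the general solution of that problem, and establishing it (including that $\Gamma_{R}$ lives on the correct defect spaces and is unique there) is the actual content of the Foias--Frazho theorem. Until that step is written out without inverses, your argument proves the statement only under invertibility hypotheses that the paper's applications cannot afford.
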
	

 The following result is a direct consequence of Theorem \ref{+ on 3*3} and Example \ref{+  Z_n}.
	\begin{prop}\label{+ on Z_3} 
		For a $\mathcal{B}(\mathcal{H})$-valued map $T$ acting on $\mathbb{Z}_3$, the following are equivalent.
		
		\begin{enumerate}
			\item $T$ is a positive definite function;
			\item  The block matrix 
			\[ 
			\Delta_T=\begin{bmatrix}
				T(0) & T(1) & T(1)^{*} \\
				T(1)^{*} & T(0) & T(1)^{*} \\
				T(1) & T(1)^{*} & T(0)\\
			\end{bmatrix}_{3\times 3}
		\] is a bounded positive operator on $\mathcal{H}^{3};$  
		
		\item $T(0)$ is positive and there are contractions $\Gamma_{0}\colon\overline{T(0)\mathcal{H}}\to\overline{T(0)\mathcal{H}}$ and $\Gamma_{1}\colon\mathfrak{D}_{\Gamma_{0}}\to\mathfrak{D}_{\Gamma_{0}^{*}}$ such that 
		\[
		T(1)=T(0)^{1\slash2} \Gamma_{0} T(0)^{1\slash2} 
		\]
		and
		\[
		 T(1)^{*}=T(0)^{1\slash2}D_{\Gamma_{0}^{*}}\Gamma_{1}D_{\Gamma_{0}}T(0)^{1\slash2} + T(0)^{1\slash2} \Gamma_{0}^{2} T(0)^{1\slash2}
		\]
			\end{enumerate} 
\end{prop}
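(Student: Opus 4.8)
The plan is to chain the three conditions together by invoking the two structural results already available: Proposition~\ref{+ finite}, which converts positive definiteness on a finite group into positivity of a single block matrix, and Theorem~\ref{+ on 3*3}, which characterises positivity of a $3\times 3$ block operator matrix. The equivalence (1)$\Leftrightarrow$(2) I would dispatch first and quickly: since $\mathbb{Z}_3$ has order $3$, Proposition~\ref{+ finite} gives that $T$ is positive definite if and only if $\Delta_T(\mathbb{Z}_3)\ge 0$ on $\mathcal{H}^3$, while Example~\ref{+ Z_n}, in the odd case $n=3$ with $m=1$, writes out its entries $T(s_i^{-1}s_j)$, which are exactly those displayed in (2).

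The real content is (2)$\Leftrightarrow$(3), which I would obtain by specialising Theorem~\ref{+ on 3*3} to $\Delta_T(\mathbb{Z}_3)$ under the dictionary $A=C=D=T(0)$, $B=B'=T(1)$, $R=T(1)^*$. With these identifications the two $2\times 2$ corner blocks both equal $\begin{bmatrix} T(0) & T(1)\\ T(1)^* & T(0)\end{bmatrix}$, and Theorem~\ref{2*2I} applied to this common block shows its positivity is equivalent to $T(0)\ge 0$ together with a contraction $\Gamma_0$ on $\overline{T(0)\mathcal{H}}$ satisfying $T(1)=T(0)^{1/2}\Gamma_0 T(0)^{1/2}$; this is precisely the first displayed equation of (3). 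Feeding $A=D=T(0)$ into the factorisation $R=A^{1/2}D_{\Gamma^*}\Gamma_R D_{\Gamma'}D^{1/2}+A^{1/2}\Gamma\Gamma'D^{1/2}$ of Theorem~\ref{+ on 3*3} and renaming $\Gamma_R$ as $\Gamma_1$ then converts the relation $R=T(1)^*$ into the second displayed equation of (3). Both directions of (2)$\Leftrightarrow$(3) run through this same dictionary, read forwards for (2)$\Rightarrow$(3) and backwards for (3)$\Rightarrow$(2).

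The step that needs genuine care, and which I expect to be the only real obstacle, is the identification $\Gamma=\Gamma'=\Gamma_0$. In Theorem~\ref{+ on 3*3} the contraction $\Gamma$ comes from the $(1,2)$-block and $\Gamma'$ from the $(2,3)$-block; here both are required to solve $T(1)=T(0)^{1/2}(\cdot)T(0)^{1/2}$ as contractions from $\overline{T(0)\mathcal{H}}$ into $\overline{T(0)\mathcal{H}}$. I would argue that this contraction is uniquely determined: $T(0)^{1/2}$ is injective on $\overline{T(0)\mathcal{H}}=\overline{\mathrm{ran}\,T(0)^{1/2}}$, so the equation pins the contraction down on the dense subspace $\mathrm{ran}\,T(0)^{1/2}$, and continuity then fixes it on all of $\overline{T(0)\mathcal{H}}$. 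Hence $\Gamma=\Gamma'$, and calling the common value $\Gamma_0$ collapses the cross term $A^{1/2}\Gamma\Gamma'D^{1/2}$ to $T(0)^{1/2}\Gamma_0^2 T(0)^{1/2}$, exactly as in (3), with $\Gamma_1$ mapping $\mathfrak{D}_{\Gamma_0}$ into $\mathfrak{D}_{\Gamma_0^*}$ as required. Everything else is routine notational bookkeeping.
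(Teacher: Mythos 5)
Your proposal is correct and takes essentially the same route as the paper: (1)$\Leftrightarrow$(2) via Proposition \ref{+ finite} and Example \ref{+ Z_n}, and (2)$\Leftrightarrow$(3) by substituting $A=C=D=T(0)$, $B=B'=T(1)$, $R=T(1)^*$ into Theorem \ref{+ on 3*3}. The one point where you go beyond the paper's proof is in explicitly justifying the identification $\Gamma=\Gamma'=\Gamma_0$ by the density/injectivity argument on $\overline{T(0)\mathcal{H}}$ --- a detail the paper's two-line proof leaves implicit but which is genuinely needed for condition (3) as stated.
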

	
		\begin{proof}
	In Example \ref{+ Z_n}, we have proved that $T$ is a positive definite function if and only if the block matrix $\Delta_T$ is positive. Substituting the entries from $\Delta_T$ in Theorem \ref{+ on 3*3}, we obtain that $\Delta_T$  is positive if and only if condition $(3)$ in the statement holds.
		\end{proof}

	 For the next corollary, we shall use the notations as in Proposition \ref{+ on Z_3}.
	
	\begin{cor}
		Let $T\colon\mathbb{Z}_{3}\to\mathcal{B}(\mathcal{H})$ be such that $T(0)=I_{\mathcal{H}}$ and $T(1)=T(2)^{*}=T_1$ is a contraction. Then $T$ is positive definite if and only if there is a contraction $\Gamma\colon\mathfrak{D}_{T_1}\to\mathfrak{D}_{T_1^{*}}$ such that $T_1^{*}=D_{T_1^{*}} \Gamma D_{T_1} + T_1^{2}.$
	\end{cor}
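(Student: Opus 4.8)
The plan is to specialize Proposition \ref{+ on Z_3} to the case $T(0)=I_{\mathcal{H}}$, invoking the equivalence of its conditions $(1)$ and $(3)$ and tracking what each ingredient becomes under this substitution. Since the hypotheses $T(0)=I_{\mathcal{H}}$ and $T(1)=T(2)^{*}=T_1$ already place $T$ in the setting of that proposition, and $T(0)=I_{\mathcal{H}}$ is trivially positive, the only task is to read off how the two defining identities of condition $(3)$ simplify.

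First I would record that $T(0)^{1/2}=I_{\mathcal{H}}$ and $\overline{T(0)\mathcal{H}}=\mathcal{H}$. The first equation of condition $(3)$, namely $T(1)=T(0)^{1/2}\Gamma_0 T(0)^{1/2}$, then collapses to $T(1)=\Gamma_0$; that is, the contraction $\Gamma_0$ is no longer free but is forced to equal $T_1$. This is the crux of the argument: the degree of freedom present in the general proposition disappears, and the single hypothesis that $T_1$ is a contraction is precisely what guarantees that $\Gamma_0=T_1$ is an admissible contraction on $\overline{T(0)\mathcal{H}}=\mathcal{H}$, as the proposition demands.

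Next, with $\Gamma_0=T_1$, the associated defect data become $D_{\Gamma_0}=D_{T_1}$, $D_{\Gamma_0^{*}}=D_{T_1^{*}}$, $\mathfrak{D}_{\Gamma_0}=\mathfrak{D}_{T_1}$ and $\mathfrak{D}_{\Gamma_0^{*}}=\mathfrak{D}_{T_1^{*}}$, while $\Gamma_0^{2}=T_1^{2}$. Substituting these, together with $T(0)^{1/2}=I_{\mathcal{H}}$, into the second equation of condition $(3)$,
\[
T(1)^{*}=T(0)^{1/2}D_{\Gamma_0^{*}}\Gamma_1 D_{\Gamma_0}T(0)^{1/2}+T(0)^{1/2}\Gamma_0^{2}T(0)^{1/2},
\]
yields $T_1^{*}=D_{T_1^{*}}\Gamma_1 D_{T_1}+T_1^{2}$. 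Renaming the contraction $\Gamma_1\colon\mathfrak{D}_{T_1}\to\mathfrak{D}_{T_1^{*}}$ as $\Gamma$ gives exactly the identity in the statement. For the converse direction I would run this backwards: given a contraction $\Gamma$ satisfying $T_1^{*}=D_{T_1^{*}}\Gamma D_{T_1}+T_1^{2}$, I set $\Gamma_0:=T_1$ and $\Gamma_1:=\Gamma$, check that the two equations of condition $(3)$ hold, and conclude positive definiteness from the equivalence $(1)\Leftrightarrow(3)$.

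There is no serious obstacle here; the content is entirely in the bookkeeping of the substitution. The only point deserving a moment's care is verifying that the hypothesis "$T_1$ is a contraction" is consistent with, and in fact exactly supplies, the contraction $\Gamma_0=T_1$ required by Proposition \ref{+ on Z_3}, so that no condition is silently lost in passing from the general statement to this special case.
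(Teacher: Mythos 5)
Your proposal is correct and follows essentially the same route as the paper's own proof, which also specializes Proposition \ref{+ on Z_3} by taking $\Gamma_0=T_1$ (forced since $T(0)^{1/2}=I_{\mathcal{H}}$) and reading off the second identity with $\Gamma=\Gamma_1$. Your write-up merely spells out the bookkeeping that the paper leaves implicit.
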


		\begin{proof}
Since $T(0)=I_\HS$ and $T_1$ is a contraction, one can choose $\Gamma_0=T_1$ in Proposition \ref{+ on Z_3} from which the desired conclusion follows.
		\end{proof}

 The following result is a direct consequence of Proposition \ref{+ on Z_3} and the last corollary.	
	\begin{cor}
		Let $T\colon\mathbb{Z}_{3}\to\mathcal{B}(\mathcal{H})$ be such that $T(0)=I_{\mathcal{H}}$ and $T(1)=T(2)^{*}=T_1$ is a strict contraction. Then $T$ is positive definite if and only if $D_{T_1^{*}}^{-1}(T_1^{*}-T_1^{2}) D_{T_1}^{-1}$ is a contraction.
	\end{cor}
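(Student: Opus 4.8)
The plan is to apply the preceding corollary and to exploit the invertibility of the defect operators that comes from strict contractivity. First I would observe that since $T_1$ is a strict contraction we have $\|T_1\|<1$, whence for every $x\in\mathcal{H}$,
\[
\langle (I_{\mathcal{H}}-T_1^*T_1)x,x\rangle=\|x\|^2-\|T_1x\|^2\geq(1-\|T_1\|^2)\|x\|^2,
\]
so that $I_{\mathcal{H}}-T_1^*T_1$ is bounded below and hence invertible; the same reasoning applies to $I_{\mathcal{H}}-T_1T_1^*$. Consequently both $D_{T_1}$ and $D_{T_1^*}$ are invertible operators on $\mathcal{H}$, and the defect spaces $\mathfrak{D}_{T_1}=\overline{D_{T_1}\mathcal{H}}$ and $\mathfrak{D}_{T_1^*}=\overline{D_{T_1^*}\mathcal{H}}$ coincide with all of $\mathcal{H}$. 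In particular, any map $\Gamma\colon\mathfrak{D}_{T_1}\to\mathfrak{D}_{T_1^*}$ appearing in the previous corollary is just an operator in $\mathcal{B}(\mathcal{H})$.

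Next I would invoke the previous corollary, which asserts that $T$ is positive definite if and only if there is a contraction $\Gamma$ satisfying $T_1^*=D_{T_1^*}\Gamma D_{T_1}+T_1^2$, equivalently $D_{T_1^*}\Gamma D_{T_1}=T_1^*-T_1^2$. Because $D_{T_1^*}$ and $D_{T_1}$ are invertible, this equation has a unique solution, namely
\[
\Gamma=D_{T_1^*}^{-1}(T_1^*-T_1^2)D_{T_1}^{-1}.
\]
Thus the existence of a contraction $\Gamma$ solving the equation is equivalent to this unique candidate being a contraction, which is exactly the stated condition that $D_{T_1^*}^{-1}(T_1^*-T_1^2)D_{T_1}^{-1}$ is a contraction.

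The argument is essentially a direct substitution into the preceding corollary, so I do not anticipate a genuine obstacle. The only point requiring care is the justification that the domain and codomain restrictions on $\Gamma$ present in the previous corollary disappear when $T_1$ is a strict contraction; this is precisely the invertibility of the defect operators recorded in the first step, and no machinery beyond the preceding corollary is needed.
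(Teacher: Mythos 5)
Your proposal is correct and is exactly the argument the paper intends: the paper states this corollary as a direct consequence of the preceding one, and your proof fills in precisely that substitution, with the (appropriate) extra care of noting that strict contractivity makes $D_{T_1}$ and $D_{T_1^*}$ invertible so the defect spaces are all of $\mathcal{H}$ and the contraction $\Gamma$ is uniquely determined as $D_{T_1^*}^{-1}(T_1^*-T_1^2)D_{T_1}^{-1}$.
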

	\vspace{0.2cm}
	
	\subsection{Positive definite functions on groups of order 4}
	
	Every group of order $4$ is abelian and isomorphic to either $\mathbb{Z}_{4}$ or $\mathbb{Z}_{2}\oplus\mathbb{Z}_2$. Thus, it suffices to consider $\mathbb{Z}_4$ and $\mathbb{Z}_{2}\oplus\mathbb{Z}_2$ while studying the positive definite functions on a group of order $4$. First, we need to compute the positive square root of the $ 2 \times 2$ block $\Delta_T=\begin{bmatrix}
		I_\HS & T\\
		T^* & I_\HS
	\end{bmatrix}
	$ for a contraction $T$. By Theorem \ref{2 by 2}, we have that $
	\Delta_T$ is positive if and only if $T$ is a contraction. Hence, $\Delta_T$ has a unique positive square root. Let $S=\begin{bmatrix}
		A & B \\
		C & D \\
	\end{bmatrix}$ be the positive square root of $\Delta_T.$ Since $S$ is self-adjoint, we have $A=A^*,$ $B=C^*$ and $D=D^*.$ Using $S^2=\Delta_T$, we have that
	\begin{equation*}
		\begin{split}
			\begin{bmatrix}
				A^2+BB^* & AB+BD \\
				B^*A + DB^* & B^*B+D^2 \\
			\end{bmatrix}=\begin{bmatrix}
				I_\HS & T\\
				T^* & I_\HS \\
			\end{bmatrix}.
		\end{split}
	\end{equation*}
	Since $I_\HS-BB^*=A^2=AA^* \geq 0,$ we get that $B$ is a contraction. Putting everything together, we have  
	\[
	A=(I_\HS-BB^*)^{1\slash2}=D_{B^*}, \quad  D=(I_\HS-B^*B)^{1\slash2}=D_B,
	\] 
	and
	\[
	T=AB+BD=D_{B^*}B+BD_{B}=2BD_{B}. 
	\]
	Hence, the positive square root of $\Delta_T$ is of the form 
	$
	\begin{bmatrix}
		D_{B^*} & B\\
		B^* & D_B \\
	\end{bmatrix}
	$
	where $T=2BD_{B}$. We write $B=(T)_{1\slash 2}.$ 

	\begin{thm}
		Let $T$ be an $\mathcal{B}(\mathcal{H})$-valued map acting on $\mathbb{Z}_4$ satisfying
		
		\[
		T(0)=I_\HS, \quad  T(1)=T_1=T(3)^{*} \quad  \mbox{and} \quad T(2)=T_2=T(2)^* .
		 \]
		Then the following are equivalent.
		\begin{enumerate}
			\item $T$ is positive definite;
			\item  The block matrix 
			\[
			\Delta_T=\begin{bmatrix}
				I_\HS & T_1 & T_2 & T_1^{*} \\
				T_1^{*} & I_\HS & T_1 & T_2 \\
				T_2 & T_{1}^{*} & I_\HS & T_{1} \\
				T_{1} & T_{2} & T_{1}^{*} & I_\HS\\
			\end{bmatrix} \geq 0;
		\]
		
			
			\item  $T_{1}$ is a  contraction and there is a self adjoint  contraction $\Gamma=\begin{bmatrix}
				\Gamma_{1} & \Gamma_{2} \\
				\Gamma_{2}^{*} & \Gamma_{4}\\
			\end{bmatrix}$ such that 
		\begin{equation}\label{Z_4eq1}
				\begin{split}
				T_{2}&= D_{S^*}\Gamma_{1}D_{S^*} + D_{S^*}\Gamma_{2}S^* + S\Gamma_{2}^*D_{S^*} + S\Gamma_{4}S^* \\
				&=S^{*}\Gamma_{1}S^{*} + S^{*}\Gamma_{2}D_{S} + D_{S}\Gamma_{2}^{*}S + D_{S}\Gamma_{4}D_{S},	
				\end{split}
			\end{equation} 
and 
\begin{equation}\label{Z_4eq2}
T_1=S^*\Gamma_1D_{S^*}+S^*\Gamma_2S^*+D_S\Gamma_2^*D_{S^*}+D_S\Gamma_4 S^*,
\end{equation}
		where, $S=(T_1)_{1\slash2}.$

\item $T_1$ is a contraction, $\pm T_2 \leq I_\HS$ and there are contractions $\Gamma_+ : \overline{(I_\HS+T_2)\mathcal{H}} \to \overline{(I_\HS+T_2)\mathcal{H}}$ and $\Gamma_- : \overline{(I_\HS-T_2)\mathcal{H}} \to \overline{(I_\HS-T_2)\mathcal{H}}$ such that  
\[
T_1+T_1^*=(I_\HS+T_2)^{1\slash 2}\Gamma_+(I_\HS+T_2)^{1\slash 2},
\]
and 
\[
T_1-T_1^*=(I_\HS-T_2)^{1\slash 2}\Gamma_-(I_\HS-T_2)^{1\slash 2}.
\]
		\end{enumerate}
	\end{thm}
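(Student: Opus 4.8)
The plan is to regroup the $4\times4$ array $\Delta_T$ into a $2\times2$ block matrix whose entries are themselves $2\times2$ blocks, and then feed this into the two-by-two positivity criteria. Writing $M=\begin{bmatrix} I_\HS & T_1 \\ T_1^* & I_\HS\end{bmatrix}$ and $N=\begin{bmatrix} T_2 & T_1^* \\ T_1 & T_2\end{bmatrix}$, inspection of the entries shows $\Delta_T=\begin{bmatrix} M & N \\ N & M\end{bmatrix}$, and since $T_2=T_2^*$ the block $N$ is self-adjoint. The equivalence of $(1)$ and $(2)$ is then immediate from Proposition \ref{+ finite}. By Theorem \ref{2*2I}, $M\ge 0$ exactly when $T_1$ is a contraction, in which case its positive square root is the operator $\begin{bmatrix} D_{S^*} & S \\ S^* & D_S\end{bmatrix}$ with $S=(T_1)_{1\slash2}$, as computed just before the statement.

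For $(2)\Leftrightarrow(3)$ I would apply Theorem \ref{2*2I} to $\Delta_T=\begin{bmatrix} M & N \\ N & M\end{bmatrix}$. Its two diagonal blocks coincide with $M$, so their positivity is the single requirement that $T_1$ be a contraction; the theorem then says $\Delta_T\ge 0$ if and only if $N=M^{1\slash2}\,\Gamma\,M^{1\slash2}$ for some contraction $\Gamma$ on $\overline{M\HS^2}$. As $N$ is self-adjoint, replacing $\Gamma$ by $\tfrac12(\Gamma+\Gamma^*)$ lets us take $\Gamma=\begin{bmatrix}\Gamma_1 & \Gamma_2 \\ \Gamma_2^* & \Gamma_4\end{bmatrix}$ self-adjoint. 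Substituting the explicit square root and expanding the four blocks of $M^{1\slash2}\Gamma M^{1\slash2}$, the two diagonal blocks each recover $T_2$ (yielding the two lines of (\ref{Z_4eq1})), while the off-diagonal blocks yield $T_1$ and $T_1^*$ (equation (\ref{Z_4eq2}) and its adjoint). This establishes $(2)\Leftrightarrow(3)$.

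For $(2)\Leftrightarrow(4)$ I would diagonalize the symmetric pattern $\begin{bmatrix} M & N \\ N & M\end{bmatrix}$. By Proposition \ref{2*2II} (with $N=N^*$) this matrix is positive if and only if $\pm N\le M$, that is $M+N\ge0$ and $M-N\ge0$; equivalently one may conjugate by the self-inverse unitary $\tfrac1{\sqrt2}\begin{bmatrix} I_\HS & I_\HS \\ I_\HS & -I_\HS\end{bmatrix}$, which carries $\Delta_T$ to $\mathrm{diag}(M+N,\,M-N)$. A direct computation gives $M+N=\begin{bmatrix} I_\HS+T_2 & T_1+T_1^* \\ T_1+T_1^* & I_\HS+T_2\end{bmatrix}$ and $M-N=\begin{bmatrix} I_\HS-T_2 & T_1-T_1^* \\ T_1^*-T_1 & I_\HS-T_2\end{bmatrix}$. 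Applying Theorem \ref{2*2I} to each, $M+N\ge0$ is equivalent to $I_\HS+T_2\ge0$ together with $T_1+T_1^*=(I_\HS+T_2)^{1\slash2}\Gamma_+(I_\HS+T_2)^{1\slash2}$ for a contraction $\Gamma_+$, and $M-N\ge0$ to $I_\HS-T_2\ge0$ together with $T_1-T_1^*=(I_\HS-T_2)^{1\slash2}\Gamma_-(I_\HS-T_2)^{1\slash2}$ for a contraction $\Gamma_-$. Since $M\pm N\ge0$ forces $M\ge0$, hence $T_1$ a contraction, and the two positivity conditions read $\pm T_2\le I_\HS$, this is exactly $(4)$.

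The main obstacle is the block expansion in the step $(2)\Leftrightarrow(3)$: one must carry out the product $M^{1\slash2}\Gamma M^{1\slash2}$ with care and observe that the $(1,2)$ and $(2,1)$ blocks are mutual adjoints, so that equation (\ref{Z_4eq2}) together with $N=N^*$ already encodes both off-diagonal constraints, while the two diagonal blocks independently return $T_2$. The one conceptual (as opposed to computational) point is the reduction to a self-adjoint $\Gamma$; once that is in place, the remainder is bookkeeping built on the square-root formula established above and repeated appeals to Theorem \ref{2*2I} and Proposition \ref{2*2II}.
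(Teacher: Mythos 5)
Your proposal is correct and follows essentially the same route as the paper: the same regrouping $\Delta_T=\begin{bmatrix} M & N \\ N & M\end{bmatrix}$, the same appeals to Theorem \ref{2*2I} (with the precomputed square root $\begin{bmatrix} D_{S^*} & S \\ S^* & D_S\end{bmatrix}$) for $(2)\Leftrightarrow(3)$, and Proposition \ref{2*2II} plus Theorem \ref{2*2I} applied to $M\pm N$ for $(2)\Leftrightarrow(4)$. The only cosmetic differences are that you obtain a self-adjoint $\Gamma$ by symmetrizing $\tfrac12(\Gamma+\Gamma^*)$, whereas the paper shows via a quadratic-form argument that any such $\Gamma$ is automatically self-adjoint, and your optional unitary conjugation to $\mathrm{diag}(M+N,M-N)$ is an equivalent substitute for the paper's direct use of Proposition \ref{2*2II}.
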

		\begin{proof}
		From Example \ref{+ Z_n}, it is clear that $T$ is positive definite if and only if $\Delta_T$ is positive.  We can rewrite $\Delta_T$ as
		\begin{equation}\label{Delta_TZ_4}
		\Delta_T=\begin{bmatrix}
			A & B\\
			B & A
		\end{bmatrix} \ \mbox{for} \ A=\begin{bmatrix}
		I_\HS & T_1 \\
		T_1^* & I_\HS
	\end{bmatrix} \ \mbox{and} \ B=\begin{bmatrix}
	T_2 & T_1^*\\
	T_1 & T_2
\end{bmatrix}=B^*. 
		\end{equation} 
Theorem \ref{2*2I} yields that $\Delta_T \geq 0$ if and only if $A \geq 0$ and there is a contraction $\Gamma$ mapping $\overline{A\mathcal{H}^2}$  into $\overline{A\mathcal{H}^2}$ such that
 \begin{equation}\label{B=A_A}
B=A^{1\slash 2} \Gamma A^{1\slash 2}.
\end{equation}
Since $A$ is a positive operator on $\mathcal{H} \oplus \HS,$ we have that $\overline{Ran} \ A=\overline{Ran} \ A^{1\slash 2}$. So we have
\begin{equation*}
	\begin{split}
		\langle \Gamma A^{1\slash 2}x, A^{1\slash 2} x \rangle 
		=\langle A^{1\slash 2} \Gamma A^{1\slash 2}x, x \rangle
		= \langle Bx,x \rangle 
		= \langle B^*x,x \rangle 
		=\langle A^{1\slash 2} \Gamma^* A^{1\slash 2}x, x \rangle
		= \langle \Gamma^* A^{1\slash 2}x, A^{1\slash 2} x \rangle, 
	\end{split}
\end{equation*}
for every $x \in \mathcal{H}$ and consequently, we have that $\Gamma$ is self-adjoint. Therefore, any contraction $\Gamma$ satisfying (\ref{B=A_A}) can be written in the block matrix form as 
\[
\Gamma=\begin{bmatrix}
	\Gamma_1 & \Gamma_2\\
	\Gamma_2^* & \Gamma_4\\
\end{bmatrix},	
\]
where $\Gamma_1, \Gamma_2$ and $\Gamma_4$ are operators on appropriate spaces such that $\Gamma_1$ and $\Gamma_4$ are self-adjoint.
Again, using Theorem \ref{2*2I} we have that $A \geq 0$ if and only if $T_1$ is a contraction. Also,
\begin{equation*}
	\begin{split}
		A^{1\slash2}\Gamma A^{1\slash 2}
		& =  	\begin{bmatrix}
		D_{S^*} & S\\
		S^* & D_S \\
	\end{bmatrix} \begin{bmatrix}
		\Gamma_1 & \Gamma_2\\
		\Gamma_2^* & \Gamma_4\\
	\end{bmatrix}   	\begin{bmatrix}
	D_{S^*} & S\\
	S^* & D_S \\
\end{bmatrix}\\
& =\small{\begin{bmatrix}
D_{S^*}\Gamma_{1}D_{S^*} + D_{S^*}\Gamma_{2}S^* + S\Gamma_{2}^*D_{S^*} + S\Gamma_{4}S^* & (S^*\Gamma_1D_{S^*}+S^*\Gamma_2S^*+D_S\Gamma_2^*D_{S^*}+D_S\Gamma_4 S^*)^* \\
S^*\Gamma_1D_{S^*}+S^*\Gamma_2S^*+D_S\Gamma_2^*D_{S^*}+D_S\Gamma_4 S^*	&
S^{*}\Gamma_{1}S^{*} + S^{*}\Gamma_{2}D_{S} + D_{S}\Gamma_{2}^{*}S + D_{S}\Gamma_{4}D_{S}
\end{bmatrix}}.\\
	\end{split}
\end{equation*}
Consequently, (\ref{B=A_A}) holds if and only if (\ref{Z_4eq1}) and (\ref{Z_4eq2}) hold. We have proved the equivalence of $(1), (2)$ and $(3)$ and now we prove that $(2) \iff (4).$ From (\ref{Delta_TZ_4}) and Proposition \ref{2*2II}, it follows that 
$\Delta_T \geq 0$ if and only if $A\geq 0$ and $\pm B \leq A.$ We show that these two conditions are equivalent to condition-$(4).$ Clearly, $A \geq 0$ if and only if $T_1$ is contraction. We have the following.
\begin{equation} \label{B<A}
	\begin{split}
		\pm B \leq A & \iff \begin{bmatrix}
			I_\HS -T_2 & T_1-T_1^*\\
			T_1^*-T_1 & I_\HS-T_2\\
		\end{bmatrix} \geq 0 \quad \mbox{and} \quad
	\begin{bmatrix}
		I_\HS +T_2 & T_1+T_1^*\\
		T_1^*+T_1 & I_\HS+T_2\\
	\end{bmatrix} \geq 0. 
	\end{split}
\end{equation}
Using Theorem \ref{2*2I}, we have that the two block matrices in (\ref{B<A}) are positive if and only if  $\pm T_2 \leq I_\HS$ and there are contractions $\Gamma_+ : \overline{(I_\HS+T_2)\mathcal{H}} \to \overline{(I_\HS+T_2)\mathcal{H}}$ and $\Gamma_- : \overline{(I_\HS-T_2)\mathcal{H}} \to \overline{(I_\HS-T_2)\mathcal{H}}$ such that  
\[
T_1+T_1^*=(I_\HS+T_2)^{1\slash 2}\Gamma_+(I_\HS+T_2)^{1\slash 2} \ \mbox{and} \
T_1-T_1^*=(I_\HS-T_2)^{1\slash 2}\Gamma_-(I_\HS-T_2)^{1\slash 2}.
\]
\end{proof}
Next, we characterize the positive definite functions on $\mathbb{Z}_2\oplus \mathbb{Z}_2.$	
	The group $\mathbb{Z}_{2}\oplus\mathbb{Z}_{2}$ is represented as $\{e,a,b,ab\}$, where $ab=ba$ and $a^2=b^2=(ab)^2=e.$  Given an operator-valued function $T$ on $G=\mathbb{Z}_{2}\oplus\mathbb{Z}_{2},$ the block matrix $\Delta_T(G)$ given in Proposition  \ref{+ finite}  takes the following form:
	\[
\Delta_T(G)=\begin{bmatrix}
	T(e) & T(a) & T(b) & T(ab) \\
	T(a) & T(e) & T(ab) & T(b) \\
	T(b) & T(ab) & T(e) & T(a)\\
	T(ab) & T(b) & T(a) & T(e)\\ 
\end{bmatrix}.
	\]
	
	\begin{thm} 
		Let $T\colon G=\mathbb{Z}_{2}\oplus\mathbb{Z}_{2}\to\mathcal{B}(\mathcal{H})$ be such that $T(e)=I_\HS$ and $T(s)=T(s)^{*}$ for all $s\in  G.$ Let $T(a)=T_1, T(b)=T_2$ and $T(ab)=T_3.$ Then the following are equivalent.		\begin{enumerate}
			\item $T$ is a positive definite function;
			\item  The block matrix 
			\[
			\Delta_T=\begin{bmatrix}
				I_\HS & T_1 & T_2 & T_3 \\
				T_1 & I_\HS & T_3 & T_2 \\
				T_2 & T_3 & I_\HS & T_{1} \\
				T_{3} & T_{2} & T_{1} & I_\HS\\
			\end{bmatrix} \geq 0;
		\]
				\item $T_1$ is a contraction, $\pm T_2 \leq I_\HS$ and there are contractions $\Gamma_+ : \overline{(I_\HS+T_2)\mathcal{H}} \to \overline{(I_\HS+T_2)\mathcal{H}}$ and $\Gamma_- : \overline{(I_\HS-T_2)\mathcal{H}} \to \overline{(I_\HS-T_2)\mathcal{H}}$ such that  
				\[
				T_1+T_3=(I_\HS+T_2)^{1\slash 2}\Gamma_+(I_\HS+T_2)^{1\slash 2} \ \mbox{and} \
				T_1-T_3=(I_\HS-T_2)^{1\slash 2}\Gamma_-(I_\HS-T_2)^{1\slash 2}.
				\]
			\end{enumerate}
		\begin{proof}
The equivalence of $(1)$ and $(2)$ follows from Proposition \ref{+ finite} and so we prove $(2) \iff (3).$ We can rewrite $\Delta_T$ as
\begin{equation}\label{Delta_TZ_2+Z_2}
	\Delta_T=\begin{bmatrix}
		A & B\\
		B & A
	\end{bmatrix} \quad \mbox{for} \quad A=\begin{bmatrix}
		I_\HS & T_1 \\
		T_1 & I_\HS
	\end{bmatrix} \quad \mbox{and} \quad B=\begin{bmatrix}
		T_2 & T_3\\
		T_3 & T_2
	\end{bmatrix}=B^*. 
\end{equation} 
Using (\ref{Delta_TZ_2+Z_2}) and Proposition \ref{2*2II}, we have that 
$\Delta_T \geq 0$ if and only if $A\geq 0$ and $\pm B \leq A.$ We show that these two conditions are equivalent to $(3).$ Clearly, $A \geq 0$ if and only if $T_1$ is contraction. We have the following.
\begin{equation} \label{B<<A}
	\begin{split}
		\pm B \leq A & \iff \begin{bmatrix}
			I_\HS -T_2 & T_1-T_3\\
			T_1-T_3 & I_\HS-T_2\\
		\end{bmatrix} \geq 0 \quad \mbox{and} \quad
		\begin{bmatrix}
			I_\HS +T_2 & T_1+T_3\\
			T_1+T_3 & I_\HS+T_2\\
		\end{bmatrix} \geq 0. 
	\end{split}
\end{equation} 
Using Theorem \ref{2*2I}, we have that the two block matrices in (\ref{B<<A}) are positive if and only if  $\pm T_2 \leq I_\HS$ and there are contractions $\Gamma_+ : \overline{(I_\HS+T_2)\mathcal{H}} \to \overline{(I_\HS+T_2)\mathcal{H}}$ and $\Gamma_- : \overline{(I_\HS-T_2)\mathcal{H}} \to \overline{(I_\HS-T_2)\mathcal{H}}$ such that  
\[
T_1+T_3=(I_\HS+T_2)^{1\slash 2}\Gamma_+(I_\HS+T_2)^{1\slash 2} \ \mbox{and} \
T_1-T_3=(I_\HS-T_2)^{1\slash 2}\Gamma_-(I_\HS-T_2)^{1\slash 2}.
\]
The proof is complete.
		\end{proof}
	\end{thm}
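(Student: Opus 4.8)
The plan is to prove the chain $(1) \Leftrightarrow (2) \Leftrightarrow (3)$, where the first equivalence is essentially immediate and all the substance sits in the second. Since $G = \mathbb{Z}_2 \oplus \mathbb{Z}_2$ is a finite group of order $4$, Proposition \ref{+ finite} says at once that $T$ is positive definite if and only if the single block matrix $\Delta_T(G)$ is positive; and the explicit form of $\Delta_T(G)$ displayed just before the statement, specialized via $T(e) = I_\HS$ and the self-adjointness hypotheses, is exactly the matrix appearing in $(2)$. So $(1) \Leftrightarrow (2)$ requires no further work.

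For $(2) \Leftrightarrow (3)$, the structural observation I would exploit first is that $\Delta_T$ has a $2 \times 2$ block form with equal diagonal blocks:
\[
\Delta_T = \begin{bmatrix} A & B \\ B & A \end{bmatrix}, \qquad A = \begin{bmatrix} I_\HS & T_1 \\ T_1 & I_\HS \end{bmatrix}, \quad B = \begin{bmatrix} T_2 & T_3 \\ T_3 & T_2 \end{bmatrix},
\]
where $B = B^*$ precisely because $T_2$ and $T_3$ are self-adjoint. This is the exact hypothesis of Proposition \ref{2*2II}, which I would invoke to reduce $\Delta_T \geq 0$ to the pair of conditions $A \geq 0$ and $\pm B \leq A$. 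The condition $A \geq 0$ is itself a $2 \times 2$ positivity with diagonal $I_\HS$; by Proposition \ref{2*2II} (or Corollary \ref{+ on Z_2II}) it is equivalent to $\pm T_1 \leq I_\HS$, i.e. to $T_1$ being a (self-adjoint) contraction, which is the first clause of $(3)$.

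It then remains to unravel $\pm B \leq A$, i.e. $A + B \geq 0$ and $A - B \geq 0$. Writing these out in block form gives
\[
A \pm B = \begin{bmatrix} I_\HS \pm T_2 & T_1 \pm T_3 \\ T_1 \pm T_3 & I_\HS \pm T_2 \end{bmatrix},
\]
each again a $2 \times 2$ block operator with self-adjoint entries and equal diagonal blocks. I would apply Theorem \ref{2*2I} to each separately: positivity becomes positivity of the diagonal block, namely $I_\HS + T_2 \geq 0$ and $I_\HS - T_2 \geq 0$ (together exactly $\pm T_2 \leq I_\HS$), together with the existence of contractions $\Gamma_+$ on $\overline{(I_\HS + T_2)\HS}$ and $\Gamma_-$ on $\overline{(I_\HS - T_2)\HS}$ factoring the off-diagonal entries as $T_1 + T_3 = (I_\HS + T_2)^{1/2}\Gamma_+(I_\HS + T_2)^{1/2}$ and $T_1 - T_3 = (I_\HS - T_2)^{1/2}\Gamma_-(I_\HS - T_2)^{1/2}$. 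This is precisely condition $(3)$.

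The only step needing genuine care, as opposed to routine block bookkeeping, is spotting that $\Delta_T$ reorganizes into $\begin{bmatrix} A & B \\ B & A \end{bmatrix}$ with $B$ self-adjoint; once that symmetry is in hand, Proposition \ref{2*2II} does the essential work of splitting the positivity into the two sign inequalities $A \pm B \geq 0$, and Theorem \ref{2*2I} finishes each half. Beyond this, I expect no real obstacle apart from confirming that the ranges produced by Theorem \ref{2*2I} are exactly the claimed $\overline{(I_\HS \pm T_2)\HS}$, which follows since both diagonal blocks of $A \pm B$ coincide with $I_\HS \pm T_2$.
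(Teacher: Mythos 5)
Your proposal is correct and follows essentially the same route as the paper's proof: the identical block decomposition $\Delta_T=\begin{bmatrix} A & B\\ B & A\end{bmatrix}$ with $B=B^*$, reduction via Proposition \ref{2*2II} to $A\geq 0$ and $\pm B\leq A$, and then Theorem \ref{2*2I} applied to the two matrices $A\pm B$ to produce the contractions $\Gamma_{\pm}$ and the condition $\pm T_2\leq I_\HS$. Your closing remark about the ranges $\overline{(I_\HS\pm T_2)\HS}$ is a correct verification of a detail the paper leaves implicit.
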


\section{Positive Definite Functions on $\mathbb{Z}$ and direct sum of its copies}\label{+ on Z^n}
	
\vspace{0.2cm}		
	
\noindent The positive definite functions on an infinite group such as $\mathbb{Z}^n (n \geq 1)$ have been well-studied in detail, e.g. see \cite{Foias and Frazho, NagyFoias, Nagy} and the references therein. We know that for an operator $P \in \mathcal B(\HS)$, the map 
\begin{equation}\label{eqn6.1}
	m \mapsto \left\{
	\begin{array}{ll}
		P^m & m \geq 1\\
		I_\mathcal{H} & m = 0\\
		P^{*|m|} & m <0\\
	\end{array} 
	\right.
\end{equation}
on $\mathbb{Z}$ is positive definite if and only if $P$ is a contraction, for example, see Section 8.1 in \cite{NagyFoias}. We give an alternative proof to this  statement using the positivity of certain block matrices. There is a canonical extension to the map given in (\ref{eqn6.1}) to the multivariable setting e.g. \cite[Chapter 1]{NagyFoias}. We discuss the two-variable extension of this map for a pair of doubly commuting contractions. The block matrix technique discussed in Section \ref{Sec_+ functions, matrices} helps us to obtain different proofs of several existing results. We conclude this section by deriving Brehmer positivity for a pair of commuting contractions acting on a Hilbert space. 
	\begin{thm}\label{+ on Z}
		For a map $T: \mathbb{Z} \to \mathcal{B}(\mathcal{H})$ the following are equivalent.
		\begin{enumerate}
			\item $T$ is a positive definite function;
			\item The block matrix 
			\[ 
			\Delta_n=\begin{bmatrix}
				
				T(0) & T(1) & T(2) & \cdots  & T(n) \\
				
				T(-1) & T(0) & T(1) & \cdots & T(n-1)\\
				
				T(-2) & T(-1) & T(0) & \cdots & T(n-2) \\
				
				\vdots & \vdots & \vdots & \ddots & \vdots \\
				
				T(-n) & T(-n+1) & T(-n+2) & \cdots & T(0)\\
			\end{bmatrix}
			\]
			defines a positive operator on $\mathcal{H}^{n+1}$ for every $n \in \mathbb{N}.$	
		\end{enumerate}
	\end{thm}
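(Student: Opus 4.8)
The plan is to read off both implications from the general correspondence already established in Proposition \ref{+ arbitrary}, after recognizing the displayed matrix $\Delta_n$ as $\Delta_T$ evaluated on the consecutive block $\{0,1,\dots,n\}$. Writing $\mathbb{Z}$ additively, the $(i,j)$ entry of $\Delta_T(s_1,\dots,s_m)$ is $T(s_j-s_i)$; choosing $s_k=k-1$ for $1\le k\le n+1$ reproduces exactly the matrix in the statement, so $\Delta_n=\Delta_T(0,1,\dots,n)$.

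For the implication $(1)\Rightarrow(2)$ I would simply invoke Proposition \ref{+ arbitrary}: if $T$ is positive definite, then $\Delta_T(s_1,\dots,s_m)$ is positive for every finite subset of $\mathbb{Z}$, and in particular for the set $\{0,1,\dots,n\}$, which is precisely $\Delta_n$.

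The substance lies in $(2)\Rightarrow(1)$. By Proposition \ref{+ arbitrary} it suffices to show $\Delta_T(F)\ge 0$ for an arbitrary finite subset $F=\{s_1<\dots<s_m\}\subset\mathbb{Z}$, and here I would use two observations. First, $\Delta_T$ is translation invariant on $\mathbb{Z}$: replacing each $s_k$ by $s_k+c$ leaves every entry $T(s_j-s_i)$ unchanged, so $\Delta_T(F)=\Delta_T(F-s_1)$. Second, after this translation $F-s_1=\{0=t_1<\dots<t_m=N\}$ with $N=s_m-s_1$ (enlarge $N$ to any larger integer if one insists $N\in\mathbb{N}$) sits inside $\{0,1,\dots,N\}$, and $\Delta_T(F-s_1)$ is exactly the principal submatrix of $\Delta_N$ keeping the rows and columns indexed by $t_1,\dots,t_m$, since both have $(i,j)$ entry $T(t_j-t_i)$. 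Equivalently, $\Delta_T(F-s_1)=\iota^*\Delta_N\iota$ for the block-coordinate isometry $\iota:\mathcal{H}^m\to\mathcal{H}^{N+1}$ placing the $m$ copies of $\mathcal{H}$ in the slots $t_1,\dots,t_m$. As $\Delta_N\ge 0$ by hypothesis and compression preserves positivity, $\Delta_T(F)=\Delta_T(F-s_1)=\iota^*\Delta_N\iota\ge 0$, and Proposition \ref{+ arbitrary} then yields that $T$ is positive definite.

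I do not anticipate a serious obstacle; the argument is a translation-plus-compression reduction. The only points needing care are verifying translation invariance and the principal-submatrix identification cleanly, together with noting that the symmetry $T(-m)=T(m)^*$ required by the definition of a positive definite function is automatic here: positivity of $\Delta_n$ forces it to be self-adjoint, and hence its $(j,i)$ block to be the adjoint of its $(i,j)$ block.
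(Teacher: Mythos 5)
Your proposal is correct and is essentially the paper's own argument in different packaging: the paper's proof is exactly the $c_{00}$-level computation underlying Proposition \ref{+ arbitrary}, specialized to $\mathbb{Z}$, where zero-padding a finitely supported function into an interval $\{-n,\dots,n\}$ plays the role of your compression $\iota^*\Delta_N\iota$ and the Toeplitz relabeling of the resulting matrix as $\Delta_{2n}$ plays the role of your translation invariance. Your write-up is slightly more modular (citing Proposition \ref{+ arbitrary} rather than re-deriving the quadratic-form identity) and has the merit of explicitly observing that the symmetry $T(-m)=T(m)^*$ required by the definition is forced by positivity of the blocks, a point the paper's proof leaves implicit.
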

	\begin{proof}
		Note that every function $h \in c_{00}(\mathbb{Z}, \mathcal{H})$ can be written as 
		\[
		h(m)=\overset{n}{\underset{i=-n}{\sum}}h_i\delta_{i}(m) \quad(m \in \mathbb{Z}),
		\]
		for some $n \in \mathbb{N}$ and $h_i \in \mathcal{H}$ where $\delta_i(.)$ is the Kronecker delta map. For the ease of computations, we write 
		\[
		h=(\dotsc, 0, 0, h_{-n}, \dotsc, h_{-1}, \boxed{h_0}, h_1, \dotsc, h_n, 0, 0, \dotsc) \ \mbox{in} \ c_{00}(\mathbb{Z}, \mathcal{H}).
		\]
		 Fix the notation $T(m)=T_m$ for $m \in \mathbb{Z}.$ Then
		\begin{equation*}
			\begin{split}
				\sum_{s,t\in \mathbb{Z}}\langle T(s^{-1}t)h(t),h(s)\rangle 
				&= \overset{n}{\underset{s,t=-n}{\sum}}\langle T(s^{-1}t)h(t),h(s)\rangle\\
				&=\overset{n}{\underset{s=-n}{\sum}}\bigg \langle \overset{n}{\underset{t=-n}{\sum}} T(s^{-1}t)h(t),h(s)\bigg \rangle\\
				&=\bigg\langle 
				\begin{bmatrix}
					h_{-n}\\
					h_{-n+1}\\
					\vdots\\
					h_{n}\\
				\end{bmatrix},
				\begin{bmatrix}
					T_0 & T_1 & \cdots & T_{2n} \\
					T_{-1} & T_{0} & \cdots & T_{2n-1} \\
					\vdots       & \vdots       & \cdots & \vdots\\
					T_{-2n} & T_{-2n+1} & \cdots & T_{0} \\
				\end{bmatrix} \cdot
				\begin{bmatrix}
					h_{-n}\\
					h_{-n+1}\\
					\vdots\\
					h_n\\
				\end{bmatrix}\bigg\rangle,\\
			\end{split}
		\end{equation*}
		which shows that $T$ is positive definite if and only if the block matrix 
		\[
		\Delta_{2n}= 
		\begin{bmatrix}
			T_{0} & T_{1} & \cdots & T_{2n} \\
			T_{-1} & T_{0} & \cdots & T_{2n-1} \\
			\vdots       & \vdots       & \cdots & \vdots\\
			T_{-2n} & T_{-2n+1}  & \cdots & T_{0} \\
		\end{bmatrix} \geq 0
		\]
for every $n \in \mathbb{N}.$ Consequently, if $\Delta_n$ is positive for every $n \in \mathbb{N}$ then $T$ is positive definite. For the converse, assume that $T$ is positive definite. For some $n \in \mathbb{N}$, let $h=(h_0, h_1, \dotsc, h_n) \in \mathcal{H}^{n+1}$ which can be extended to a function in $c_{00}(\mathbb{Z}, \mathcal{H})$ as
		\[
		h(m)=\overset{n+1}{\underset{i=0}{\sum}}h_i\delta_{h_i}(m) \quad(m \in \mathbb{Z}),
		\]
		and again the same calculations give the following:
		\begin{equation*}
			\begin{split}
				\sum_{s,t\in \mathbb{Z}}\langle T(s^{-1}t)h(t),h(s)\rangle 
				&=\bigg\langle 
				\begin{bmatrix}
					h_{0}\\
					h_{1}\\
					\vdots\\
					h_{n}\\
				\end{bmatrix},
				\begin{bmatrix}
					T_0 & T_1 & \cdots & T_{n} \\
					T_{-1} & T_{0} & \cdots & T_{n-1} \\
					\vdots       & \vdots       & \cdots & \vdots\\
					T_{-n} & T_{-n+1} & \cdots & T_{0} \\
				\end{bmatrix} \cdot
				\begin{bmatrix}
					h_{0}\\
					h_{1}\\
					\vdots\\
					h_n\\
				\end{bmatrix}\bigg\rangle, \quad(T(m)=T_m)\\
			\end{split}
		\end{equation*}
		which gives that the block matrix $\Delta_n$ is positive since $T$ is positive definite.
	\end{proof}
We want to prove that the map given in (\ref{eqn6.1}) is positive definite if $P$ is a contraction. To do so, we need the following result.	
	\begin{prop}\label{+ on Z1}
		 An operator $P$ acting on a space $\mathcal{H}$ is a contraction if and only if the block matrix
		\[
		\Delta_n=\begin{bmatrix}
			I_\HS & P & P^2 & \cdots & P^{n} \\
			P^{*} & I_\HS & P & \cdots & P^{n-1} \\
			P^{*2} & P^{*} & I_\HS & \cdots & P^{n-2} \\
			\vdots &	\vdots       & \vdots       & \cdots & \vdots\\
			P^{*n} & P^{*(n-1)} & P^{*(n-2)} & \cdots & I_\HS \\
		\end{bmatrix}
		\]
		is a positive operator on $\mathcal{H}^{n+1}$ for every $n \in \mathbb{N}.$
	\end{prop}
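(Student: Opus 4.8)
The plan is to prove the two implications separately, with all the substance lying in the forward direction. For the easy converse, I would note that $\Delta_1=\bigl[\begin{smallmatrix} I_\HS & P\\ P^* & I_\HS\end{smallmatrix}\bigr]$ is a principal $2\times 2$ block submatrix of every $\Delta_n$ with $n\ge 1$; hence if some $\Delta_n\ge 0$ then $\Delta_1\ge 0$, and evaluating the quadratic form of $\Delta_1$ at the vector $(-Px,x)$ yields $\|x\|^2-\|Px\|^2\ge 0$, so $P$ is a contraction. (Equivalently one may invoke Theorem \ref{2*2I}, or the Schur complement $I_\HS-P^*P\ge 0$.)

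For the forward direction, suppose $\|P\|\le 1$ and fix $n$. I would compute the quadratic form $\langle \Delta_n x,x\rangle$ directly for an arbitrary $x=(x_0,\dots,x_n)\in\HS^{n+1}$ and exhibit it as a sum of squares. The key device is the family of tail sums
\[
u_i=\sum_{j=i}^{n}P^{\,j-i}x_j,\qquad i=0,1,\dots,n,\quad u_{n+1}:=0,
\]
which satisfy the recursion $u_i=x_i+Pu_{i+1}$. Expanding $\langle \Delta_n x,x\rangle=\sum_i\|x_i\|^2+2\operatorname{Re}\sum_{i<j}\langle P^{\,j-i}x_j,x_i\rangle$ and using the identity $\langle x_i,Pu_{i+1}\rangle=\sum_{j>i}\overline{\langle P^{\,j-i}x_j,x_i\rangle}$, I expect to recognize
\[
\langle \Delta_n x,x\rangle=\sum_{i=0}^{n}\bigl(\|u_i\|^2-\|Pu_{i+1}\|^2\bigr).
\]

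This sum telescopes: re-indexing the subtracted terms and using $u_{n+1}=0$ collapses it to
\[
\langle \Delta_n x,x\rangle=\|u_0\|^2+\sum_{i=1}^{n}\bigl(\|u_i\|^2-\|Pu_i\|^2\bigr)=\Bigl\|\sum_{j=0}^{n}P^{\,j}x_j\Bigr\|^2+\sum_{i=1}^{n}\|D_Pu_i\|^2,
\]
where $D_P=(I_\HS-P^*P)^{1/2}$. Since $P$ is a contraction, $D_P$ is a well-defined operator and every term on the right is nonnegative, whence $\Delta_n\ge 0$ for every $n$.

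The only genuinely delicate point is discovering the correct completion of squares — that is, guessing the tail sums $u_i$ and verifying the telescoping identity — after which the argument is purely formal; I would sanity-check the identity in the case $n=1$ first to fix the indexing. An alternative route would be an induction on $n$ via the Schur complement, writing $\Delta_n=\bigl[\begin{smallmatrix}\Delta_{n-1}&w\\ w^*&I_\HS\end{smallmatrix}\bigr]$ with $w=(P^n,P^{n-1},\dots,P)^{\mathsf T}$ and reducing to $\Delta_{n-1}-ww^*\ge 0$ by Theorem \ref{2*2I}; but that Schur complement does not simplify nearly as transparently as the closed-form sum of squares above, so I would prefer the direct computation.
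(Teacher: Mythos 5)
Your proof is correct, and your converse is exactly the paper's: both compress to the first $2\times 2$ block corner and test the quadratic form at $(-Px,x)$. The forward direction reaches the same algebra by a different-looking route, and it is worth recording how the two relate. The paper (following Paulsen's proof of his Theorem 2.6) introduces the nilpotent block shift $Q$ on $\HS^{n+1}$ with $P^*$ on the subdiagonal, proves the resolvent identity $\Delta_n=(I-Q)^{-1}+(I-Q^*)^{-1}-I$, and substitutes $h=(I-Q)y$ to get $\langle\Delta_n h,h\rangle=\|y\|^2-\|Qy\|^2\ge 0$. Your tail sums are precisely the same change of variables performed on the adjoint side: since $(Q^*u)_i=Pu_{i+1}$, your recursion $u_i=x_i+Pu_{i+1}$ says $x=(I-Q^*)u$, and your telescoped identity $\langle\Delta_n x,x\rangle=\|u_0\|^2+\sum_{i=1}^n\|D_Pu_i\|^2$ is the operator factorization $(I-Q)\Delta_n(I-Q^*)=I-QQ^*$, the mirror image of the paper's $(I-Q^*)\Delta_n(I-Q)=I-Q^*Q$. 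So the two arguments are adjoint-transposes of one another rather than genuinely independent. What your presentation buys is an explicit sum-of-squares certificate, $\langle\Delta_n x,x\rangle=\bigl\|\sum_{j=0}^n P^jx_j\bigr\|^2+\sum_{i=1}^n\|D_Pu_i\|^2$, which makes the positivity (and where it can degenerate) visible at a glance, at the cost of the index bookkeeping you flag as the delicate point; what the paper's packaging buys is that all bookkeeping is absorbed into the algebra of resolvents of a nilpotent contraction, so no completion of squares has to be guessed, and the identity $\Delta_n=(I-Q)^{-1}+(I-Q^*)^{-1}-I$ is reusable verbatim in similar Toeplitz-positivity arguments. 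Both proofs correctly invoke $\|P\|\le 1$ exactly where it is needed: yours so that $D_P$ exists, the paper's so that $\|Q\|\le 1$.
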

	
\begin{proof}
A proof to this Proposition is given as a part of the proof of Theorem 2.6 in \cite{Paulsen}. We imitate the proof here for the sake of completeness. Assume that $P$ is a contraction and set 
		\[
		Q=\begin{bmatrix}
			0 & 0 & 0 & \cdots & 0 & 0 \\
			P^* & 0 & 0 & \cdots & 0 & 0 \\
			0 & P^* & 0 & \cdots & 0 & 0 \\
			\vdots &	\vdots       & \vdots       & \cdots & \vdots & \vdots\\
			0 & 0 & 0 & \cdots & P^* & 0 \\
		\end{bmatrix}_{(n+1) \times (n+1)}.
		\]
		Note that $Q^{(n+1)}=0$ and $\|Q\| \leq \|P^*\| \leq 1$. Let $I_n$ be the indentity operator on $\mathcal{H}^{n+1}$. We see that $I_n-Q$ is invertible with the inverse given by
		\[
			(I_n-Q)^{-1}=\begin{bmatrix}
				I_\HS & 0 & 0 & \cdots & 0 & 0 \\
				P^* & I_\HS & 0 & \cdots & 0 & 0 \\
				P^{*2} & P^* & I_\HS & \cdots & 0 & 0 \\
				\vdots &	\vdots       & \vdots       & \cdots & \vdots & \vdots\\
				P^{*(n-1)} & P^{*(n-2)} & P^{*(n-3)} & \cdots & P^* & I_\HS \\
			\end{bmatrix}.
			\]
			Consequently, we have 
			\[
			\Delta_n=I_n+Q+Q^2+\dotsc +Q^n+Q^*+Q^{*2}+ \dotsc +Q^{*n}=(I_n-Q)^{-1}+(I_n-Q^*)^{-1}-I_n.
			\] 
			To see that $\Delta_n$ is positive, fix $h \in \mathcal{H}^{n+1}$ and let $h=(I_n-Q)y$ for some $y \in \mathcal{H}^{n+1}.$ Then 
			\begin{equation*}
				\begin{split}
					\langle \Delta_n h,h\rangle
					&=\langle ((I_n-Q)^{-1}+(I_n-Q^*)^{-1}-I_n) h,h\rangle\\ 
					&=\langle y, (I_n-Q)y\rangle + \langle (I_n-Q)y, y \rangle -\langle (I_n-Q)y, (I_n-Q)y\rangle\\
					&=\|y\|^2-\|Qy\|^2 \geq 0,
				\end{split}
			\end{equation*}
			as $Q$ is a contraction. Since $n \in \mathbb{N}$ is arbitrary, we have that $\Delta_n$ is positive for every $n \in \mathbb{N}.$ For the converse, we see that if $\Delta_1$ is positive, then the block matrix
			$
			\begin{bmatrix}
				I_\HS & P\\
				P^* & I_\HS
			\end{bmatrix}
			$
			is positive which implies that $P$ is a contraction since for any $h \in \mathcal{H},$ we have that 
			\[
			\|h\|^2-\|Ph\|^2=\bigg  \langle \begin{bmatrix}
				I_\HS & P\\
				P^* & I_\HS
			\end{bmatrix} \cdot \begin{bmatrix}
				-Ph\\
				h
			\end{bmatrix}, 
			\begin{bmatrix}
				-Ph\\
				h
			\end{bmatrix}\bigg \rangle \geq 0.
			\]
			The proof is complete.
		\end{proof}
		\begin{cor}\label{particular example Z}
			Given an operator $P$ acting on a Hilbert space $\HS$, the map $T: \mathbb{Z} \to \mathcal{B}(\mathcal{H})$ as in $(\ref{eqn6.1})$
			is positive definite if and only if $P$ is a contraction.
		\end{cor}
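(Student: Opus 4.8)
The plan is to obtain this as an immediate consequence of Theorem \ref{+ on Z} and Proposition \ref{+ on Z1}, the whole content being the observation that the two families of block matrices appearing in those two results are literally the same matrix for the function $T$ given in (\ref{eqn6.1}). First I would record the Hermitian symmetry needed for $T$ to be a candidate positive definite function: for every $m \geq 1$ we have $T(-m)=P^{*|m|}=(P^{m})^{*}=T(m)^{*}$, so $T(s^{-1})=T(s)^{*}$ holds on all of $\mathbb{Z}$.

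Next I would specialize the block matrix $\Delta_n$ from Theorem \ref{+ on Z} to this particular $T$. Its $(i,j)$ entry is $T(j-i)$, and substituting $T(k)=P^{k}$ for $k\geq 0$ together with $T(-k)=P^{*k}$ for $k\geq 0$ shows that this entry equals $P^{\,j-i}$ when $j\geq i$ and equals $P^{*(i-j)}$ when $i>j$. Thus powers of $P$ fill the upper triangle and powers of $P^{*}$ fill the lower triangle, and $\Delta_n$ coincides exactly with the block matrix displayed in the statement of Proposition \ref{+ on Z1}.

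Finally I would chain the two equivalences. By Theorem \ref{+ on Z}, the map $T$ is positive definite if and only if $\Delta_n$ is a positive operator on $\mathcal{H}^{n+1}$ for every $n\in\mathbb{N}$; and by Proposition \ref{+ on Z1}, this positivity of $\Delta_n$ for all $n$ holds if and only if $P$ is a contraction. Combining the two gives precisely the asserted equivalence.

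Since both of the results being invoked are already established, there is no genuine analytic obstacle here; the only point requiring care is the bookkeeping that identifies the two Toeplitz-type block matrices, in particular verifying that the indexing in Theorem \ref{+ on Z} places $P^{\,j-i}$ strictly above the diagonal and $P^{*(i-j)}$ strictly below it, so that the matrix matches the one in Proposition \ref{+ on Z1} entry by entry.
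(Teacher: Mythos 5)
Your proposal is correct and follows exactly the paper's own argument: identify the block matrix $\Delta_n$ of Theorem \ref{+ on Z}, specialized to the map in (\ref{eqn6.1}), with the matrix in Proposition \ref{+ on Z1}, and chain the two equivalences. The extra bookkeeping you include (the Hermitian symmetry $T(-m)=T(m)^{*}$ and the entrywise matching of the upper and lower triangles) is a slightly more careful write-up of what the paper does implicitly.
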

		\begin{proof}
			From Theorem \ref{+ on Z}, it follows that $T$ is positive definite if and only if the block matrix 
			\[ 
			\Delta_n=\begin{bmatrix}
				T(0) & T(1) & T(2) & \cdots  & T(n) \\
				T(-1) & T(0) & T(1) & \cdots & T(n-1)\\
				T(-2) & T(-1) & T(0) & \cdots & T(n-2) \\
				\vdots & \vdots & \vdots & \ddots & \vdots \\
				T(-n) & T(-n+1) & T(-n+2) & \cdots & T(0)\\
			\end{bmatrix}
			=\begin{bmatrix}
				I_\HS & P & P^2 & \cdots & P^{n} \\
				P^{*} & I_\HS & P & \cdots & P^{n-1} \\
				P^{*2} & P^{*} & I_\HS & \cdots & P^{n-2} \\
				\vdots &	\vdots       & \vdots       & \cdots & \vdots\\
				P^{*n} & P^{*(n-1)} & P^{*(n-2)} & \cdots & I_\HS \\
			\end{bmatrix}
			\]
			defines a positive operator on $\mathcal{H}^{n+1}$ for every $n \in \mathbb{N}$. Hence, the desired conclusion follows from Proposition \ref{+ on Z1}.
		\end{proof}
		 Now, we find associated block matrices for the group $\mathbb{Z}\oplus \mathbb{Z}.$ To write the block matrix explicitly, we need to define an order on the elements of $\mathbb{Z}\oplus \mathbb{Z}$ for which we choose the \textit{lexicographic order} given by 
		\[
		(i_1, j_1) <  (i_2, j_2) \quad \mbox{if either} \quad i_1 < i_2 \quad \mbox{or } \quad i_1=i_2 \quad \mbox{and} \quad j_1<j_2,
		\]
		for every $(i_1, j_1), (i_2, j_2) \in \mathbb{Z}\oplus \mathbb{Z}.$ Then any $\hat{h} \in c_{00}(\mathbb{Z}\oplus \mathbb{Z}, \mathcal{H})$ can be written as 
		\begin{equation*}
			\hat{h}(i,j)=\overset{(n,n)}{\underset{(k,l)=(-n,-n)}{\sum}}h_{k,l}\delta_{(k,l)}(i,j) \quad(k,l \in \mathbb{Z})    
		\end{equation*}
		for some $n \in \mathbb{N}$ and $h_{k,l} \in \mathcal{H}.$ We simply write 
		\[
		\hat{h}=\bigg(h(-n,-n), \dotsc, h(-n,n), h(-n+1, -n), \dotsc, h(-n+1, n) \dotsc, h(n,-n), \dotsc, h(n,n) \bigg),
		\]
		in  $c_{00}(\mathbb{Z}\oplus \mathbb{Z}, \mathcal{H}).$ For example, if $n=1$ then 
		\[
		\hat{h}=\bigg(h(-1, -1), h(-1, 0), h(-1, 1), h(0,-1), h(0,0), h(0,1), h(1,-1), h(1, 0), h(1,1) \bigg).
		\]
		Now, we compute the following. Fix the notation $T(i,j)=T_{i,j}$ for $i,j\in \mathbb{Z}.$ For $s=(s_1,s_2)$ and $t=(t_1, t_2),$ we have that
		\begin{equation*}
			\begin{split}
				\sum_{s,t\in \mathbb{Z}\oplus \mathbb{Z}}\langle T(s^{-1}t)\hat{h}(t),\hat{h}(s)\rangle 
				&= \overset{n}{\underset{s_1,t_1=-n}{\sum}} \  \overset{n}{\underset{s_2,t_2=-n}{\sum}}
				\bigg \langle T(t_1-s_1, t_2-s_2)\hat{h}(t_1, t_2),\hat{h}(s_1, s_2)
				\bigg \rangle,\\
			\end{split}
		\end{equation*} 
		which is equal to
		\begin{small} \[
			\bigg\langle 
			\left[
			\begin{array}{cccc|c|cccc}
				T_{0, 0} & T_{0, 1} & \cdots & T_{0,2n}  & \cdots & T_{2n,0} & T_{2n,1} & \cdots & T_{2n, 2n}  \\
				T_{0,-1} & T_{0, 0} & \cdots & T_{0,2n-1} & \cdots & T_{2n,-1} & T_{2n,0} & \cdots & T_{2n,2n-1} \\
				\vdots       & \vdots       & \cdots & \vdots & \vdots       & \vdots       & \vdots & \cdots  & \vdots\\
				T_{0,-2n} & T_{0,-2n+1} & \cdots & T_{0,0} & \cdots & T_{2n,-2n} & T_{2n,-2n+1} & \cdots & T_{2n,0} \\
				\hline
				\vdots       & \vdots       & \cdots & \vdots & \vdots & \vdots       & \vdots       & \cdots & \vdots\\
				\hline
				T_{-2n,0} & T_{-2n,1} & \cdots & T_{-2n,n} & \cdots & T_{0, 0} & T_{0, 1} & \cdots & T_{0, 2n}  \\
				T_{-2n,-1} & T_{-2n,0} & \cdots & T_{-2n,2n-1} & \cdots & T_{0,-1} & T_{0, 0} & \cdots & T_{0,2n-1} \\
				\vdots       & \vdots       & \cdots & \vdots & \vdots       & \vdots       & \cdots & \vdots & \vdots \\
				T_{-2n,-2n} & T_{-2n,-2n+1} & \cdots & T_{-2n,0} & \cdots & T_{0,-2n} & T_{0,-2n+1} & \cdots & T_{0, 0} \\	
			\end{array} 
			\right]
			\begin{bmatrix}
				h_{-n,-n}\\
				h_{-n, -n+1}\\
				\vdots \\
				h_{-n,n}\\
				\vdots \\
				h_{n,-n}\\
				h_{n, -n+1}\\
				\vdots \\
				h_{n,n}\\
			\end{bmatrix},
			\begin{bmatrix}
				h_{-n,-n}\\
				h_{-n, -n+1}\\
				\vdots \\
				h_{-n,n}\\
				\vdots \\
				h_{n,-n}\\
				h_{n, -n+1}\\
				\vdots \\
				h_{n,n}\\
			\end{bmatrix}
			\bigg\rangle .
			\]
		\end{small} 
		$ $\\
		This shows that $T$ is positive definite if and only if the block matrix 
		\[
		\Delta_{2n+1}= 
		\left[
		\begin{array}{cccc|c|cccc}
			T_{0, 0} & T_{0, 1} & \cdots & T_{0,2n}  & \cdots & T_{2n,0} & T_{2n,1} & \cdots & T_{2n, 2n}  \\
			T_{0,-1} & T_{0, 0} & \cdots & T_{0,2n-1} & \cdots & T_{2n,-1} & T_{2n,0} & \cdots & T_{2n,2n-1} \\
			\vdots       & \vdots       & \cdots & \vdots & \vdots       & \vdots       & \vdots & \cdots & \vdots\\
			T_{0,-2n} & T_{0,-2n+1} & \cdots & T_{0,0} & \cdots & T_{2n,-2n} & T_{2n,-2n+1} & \cdots & T_{2n,0} \\
			\hline
			\vdots       & \vdots       & \cdots & \vdots & \vdots & \vdots       & \vdots       & \cdots & \vdots\\
			\hline
			T_{-2n,0} & T_{-2n,1} & \cdots & T_{-2n,n} & \cdots & T_{0, 0} & T_{0, 1} & \cdots & T_{0, 2n}  \\
			T_{-2n,-1} & T_{-2n,0} & \cdots & T_{-2n,2n-1} & \cdots & T_{0,-1} & T_{0, 0} & \cdots & T_{0,2n-1} \\
			\vdots       & \vdots       & \cdots & \vdots & \vdots       & \vdots       & \vdots & \cdots & \vdots \\
			T_{-2n,-2n} & T_{-2n,-2n+1} & \cdots & T_{-2n,0} & \cdots & T_{0,-2n} & T_{0,-2n+1} & \cdots & T_{0,0} \\	
		\end{array} 
		\right]
		\]
		is a positive operator on $\mathcal{H}^{(2n+1)^2}$ for every $n \in \mathbb{N}.$ Consequently, if $\Delta_{n+1}$ is positive for every $n \in \mathbb{N},$ then $T$ is positive definite. Conversely, suppose $T$ is positive definite and let $n \in \mathbb{N}.$ Let $\hat{h}=(h_0, h_1, \dotsc, h_n)$ for $h_i=(h_{i0}, h_{i1}, \dotsc, h_{in})$ in $\mathcal{H}^{(n+1)^2}$ which can be extended to a function in $c_{00}(\mathbb{Z}\oplus \mathbb{Z}, \mathcal{H})$ as 
		\[
		\hat{h}(i,j)=\overset{(n,n)}{\underset{(k,l)=(0,0)}{\sum}}h_{k,l}\delta_{(k,l)}(i,j) \quad(k,l \in \mathbb{Z}).
		\]
		Using the same calculations, we have that 
		\[
		\langle \Delta_{n+1} \hat{h}, \hat{h} \rangle =  \overset{n}{\underset{s_1,t_1=0}{\sum}} \  \overset{n}{\underset{s_2,t_2=0}{\sum}}
		\bigg \langle T(t_1-s_1, t_2-s_2)\hat{h}(t_1, t_2),\hat{h}(s_1, s_2)
		\bigg \rangle,\\
		\]
		which is positive since $T$ is a positive definite function. Since $n \in \mathbb{N}$ is arbitrary, we have proved the following result.
		\begin{thm}\label{+ on Z+Z}
			Let $T: \mathbb{Z} \oplus \mathbb{Z} \to \mathcal{B}(\mathcal{H})$ be an operator-valued function. 
			Then the following are equivalent.
			\begin{enumerate}
				\item $T$ is a positive definite function;
				\item The block matrix 
				\begin{equation}\label{Z+Z eq}
					\Delta_{n+1}=\left[
					\begin{array}{cccc|c|cccc}
						T_{0, 0} & T_{0, 1} & \cdots & T_{0,n}  & \cdots & T_{n,0} & T_{n,1} & \cdots & T_{n, n}  \\
						T_{0,-1} & T_{0, 0} & \cdots & T_{0,n-1} & \cdots & T_{n,-1} & T_{n,0} & \cdots & T_{n,n-1} \\
						\vdots       & \vdots       & \cdots & \vdots & \vdots       & \vdots       & \vdots & \cdots  & \vdots \\
						T_{0,-n} & T_{0,-n+1} & \cdots & T_{0,0} & \cdots & T_{n,-n} & T_{n,-n+1} & \cdots & T_{n,0} \\
						\hline
						\vdots       & \vdots       & \cdots & \vdots & \vdots & \vdots       & \vdots       & \cdots & \vdots\\
						\hline
						T_{-n,0} & T_{-n,1} & \cdots & T_{-n,n} & \cdots & T_{0, 0} & T_{0, 1} & \cdots & T_{0, n}  \\
						T_{-n,-1} & T_{-n,0} & \cdots & T_{-n,n-1} & \cdots & T_{0,-1} & T_{0, 0} & \cdots & T_{0,n-1} \\
						\vdots       & \vdots       & \cdots & \vdots & \vdots       & \vdots       & \vdots & \cdots & \vdots \\
						T_{-n,-n} & T_{-n,-n+1} & \cdots & T_{-n,0} & \cdots & T_{0,-n} & T_{0,-n+1} & \cdots & T_{0,0} \\	
					\end{array} 
					\right]
				\end{equation} 
				defines a positive operator on $\mathcal{H}^{(n+1)^2}$ for every $n \in \mathbb{N}.$	
			\end{enumerate}
		\end{thm}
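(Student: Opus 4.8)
The plan is to mirror the one-variable argument of Theorem \ref{+ on Z}, replacing the single-index block matrices by the two-level block structure dictated by the lexicographic order on $\mathbb{Z} \oplus \mathbb{Z}$. The starting observation is that every $\hat{h} \in c_{00}(\mathbb{Z} \oplus \mathbb{Z}, \mathcal{H})$ has finite support, hence is supported in some square $\{-n, \ldots, n\} \times \{-n, \ldots, n\}$; this reduces each occurrence of the defining double sum to a finite sum, and the whole problem becomes the identification of that finite sum with a quadratic form.

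First I would expand $\sum_{s,t} \langle T(s^{-1}t)\hat{h}(t), \hat{h}(s)\rangle$ by writing $s = (s_1, s_2)$ and $t = (t_1, t_2)$ and using $s^{-1}t = (t_1 - s_1, t_2 - s_2)$, so that each summand is $\langle T_{t_1 - s_1,\, t_2 - s_2}\, \hat{h}(t), \hat{h}(s)\rangle$. Listing the arguments in lexicographic order groups the sum into an outer block structure indexed by the first-coordinate difference $t_1 - s_1$, whose entries are themselves Toeplitz-type blocks in the second-coordinate difference $t_2 - s_2$. This is exactly the nested block array displayed before the statement, and it identifies the double sum with $\langle \Delta_{2n+1} x, x\rangle$, where $x$ lists the values $\hat{h}(k,l)$ in lexicographic order.

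From this identity both implications fall out as in the one-variable case. If every $\Delta_{n+1}$ is positive, then for an arbitrary $\hat{h}$ supported in $\{-n, \ldots, n\}^2$ the double sum equals $\langle \Delta_{2n+1} x, x\rangle \geq 0$, so $T$ is positive definite; here the required symmetry $T(s^{-1}) = T(s)^*$ is automatic, since positivity of the blocks forces each $\Delta_{n+1}$ to be self-adjoint, and comparing the $(s,t)$ and $(t,s)$ blocks yields $T_{-i,-j} = T_{i,j}^*$. Conversely, if $T$ is positive definite, then fixing $n$ and feeding in the test functions supported on $\{0, \ldots, n\}^2$, parametrized by arbitrary $x \in \mathcal{H}^{(n+1)^2}$, gives $\langle \Delta_{n+1} x, x\rangle \geq 0$ for all such $x$, whence $\Delta_{n+1} \geq 0$.

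The one genuinely delicate point, and the only real obstacle, is the index bookkeeping: the matrices arising directly from a symmetric support $\{-n,\ldots,n\}^2$ have odd side-length $2n+1$, whereas the statement is phrased with $\Delta_{n+1}$ of side-length $n+1$. The reconciliation is that all entries depend only on the coordinate differences, so a symmetric window and a one-sided window $\{0,\ldots,m\}^2$ of the same size yield the same operator after the translation $(k,l) \mapsto (k+n, l+n)$; hence the families $\{\Delta_{2n+1}\}_n$ and $\{\Delta_{m}\}_m$ exhaust the same set of block matrices, and positivity of one family is equivalent to positivity of the other. Verifying that the lexicographic arrangement really produces the claimed nested Toeplitz form is the most error-prone step, but it is entirely mechanical.
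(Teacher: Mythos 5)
Your proposal is correct and follows essentially the same route as the paper's own proof: expand the defining double sum over a lexicographically ordered finite support, identify it with the quadratic form of the nested Toeplitz-type block matrix, and then use symmetric windows $\{-n,\ldots,n\}^2$ for one implication and one-sided windows $\{0,\ldots,n\}^2$ for the other, reconciled by translation invariance of the entries. Your explicit observation that the symmetry condition $T(s^{-1})=T(s)^*$ is forced by positivity (hence self-adjointness) of the blocks is a small point the paper leaves implicit, but it does not change the argument.
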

		\begin{rem}\label{delta for Z+Z} For a pair of commuting operators $(T_1, T_2)$ acting on $\mathcal{H}$, we consider the operator valued-function $T: \mathbb{Z}\oplus \mathbb{Z} \to \mathcal{B}(\mathcal{H})$ given by
			\begin{equation}\label{dcmap}
			T(m,n):=\left\{
			\begin{array}{ll}
				T_1^mT_2^n & m,n \geq 0\\
				T_1^{*|m|}T_2^n & m <0, n >0\\
				T_2^{*|n|}T_1^m & m >0, n <0\\
				T_1^{*|m|}T_2^{*|n|} & m, n <0 \ .\\
			\end{array} 
			\right.
			\end{equation}
						The associated block matrix $\Delta_n$ given by $(\ref{Z+Z eq})$ has the following form.
			
			\begin{equation}\label{Z+Z particular}
				\Delta_{n+1}=\left[
				\begin{array}{cccc|c|cccc}
					I_\HS & T_2 & \cdots & T_2^n  & \cdots & T_1^{n} & T_1^{n}T_2 & \cdots & T_1^nT_2^n  \\
					T_2^* & I_\HS & \cdots & T_2^{n-1} & \cdots & T_2^*T_1^n & T_1^n & \cdots & T_1^nT_2^{n-1} \\
					\vdots       & \vdots       & \cdots & \vdots & \vdots       & \vdots       & \vdots & \cdots & \vdots\\
					T_2^{*n} & T_2^{*(n-1)} & \cdots & I_\HS & \cdots & T_2^{*n}T_1^n & T_2^{*(n-1)}T_1^n & \cdots & T_1^n \\
					\hline
					\vdots       & \vdots       & \cdots & \vdots & \vdots & \vdots       & \vdots       & \cdots & \vdots\\
					\hline
					T_1^{*n} & T_1^{*n}T_2 & \cdots & T_1^{*n}T_2^n & \cdots & I_\HS & T_2 & \cdots & T_2^n  \\
					T_2^*T_1^{*n} & T_1^{*n} & \cdots & T_1^{*n}T_2^{n-1} & \cdots & T_2^* & I_\HS & \cdots & T_2^{n-1} \\
					\vdots       & \vdots       & \cdots & \vdots & \vdots       & \vdots       & \vdots & \cdots & \vdots\\
					T_2^{*n}T_1^{*n} & T_2^{*(n-1)}T_1^{*n} & \cdots & T_1^{*n} & \cdots & T_2^{*n} & T_2^{*(n-1)} & \cdots & I_\HS \\	
				\end{array} 
				\right],
			\end{equation} 
			which can be re-written as
			\begin{equation}\label{Recursive}
				\Delta_{n+1}=\begin{bmatrix}
					A_{11} & A_{12} & \cdots & A_{1,n+1} \\
					A_{21} & A_{22} & \cdots & A_{2,n+1} \\
					\vdots & \vdots & \cdots &  \vdots\\
					A_{n+1, 1} & A_{n+1, 2} & \cdots & A_{n+1, n+1} \\
				\end{bmatrix}
				\ \mbox{such that} \ A_{ij}=A_{i+1, j+1} \ \mbox{and} \ A_{ji}=A_{ij}^*,
			\end{equation}
			for all $i,j$ and each $A_{ij}$ is an $(n+1) \times (n+1)$ block matrix with entries consisting of operators on $\mathcal{H}.$ It is evident from $(\ref{Recursive})$ that $\Delta_n$ is completely determined by $A_{11}, A_{12}, \dotsc, A_{1, n+1}$. By $(\ref{Z+Z particular})$, we have that $A_{1j}$ is equal to
			
			\begin{equation}\label{A_1j}
				\begin{bmatrix}
					T_1^{j-1} & T_1^{j-1}T_2 & \cdots & T_1^{j-1}T_2^n  \\
					T_2^*T_1^{j-1} & T_1^{j-1} & \cdots & T_1^{j-1}T_2^{n-1} \\
					\vdots & \vdots   & \cdots & \vdots\\
					T_2^{*n}T_1^{j-1} & T_2^{*(n-1)}T_1^{j-1} & \cdots & T_1^{j-1} \\
				\end{bmatrix}
				=
				\begin{bmatrix}
					I_\HS & T_2 & \cdots & T_2^n  \\
					T_2^* & I_\HS & \cdots & T_2^{n-1} \\
					\vdots & \vdots   & \cdots & \vdots\\
					T_2^{*n} & T_2^{*(n-1)} & \cdots & I_\HS \\
				\end{bmatrix} 
				\begin{bmatrix}
					T_1 & O & \cdots & O  \\
					O & T_1 & \cdots & O\\
					\vdots & \vdots   & \cdots & \vdots\\
					O & O & \cdots  & T_1\\
				\end{bmatrix}^{j-1},
			\end{equation}
			for $1 \leq j \leq n+1.$ Consequently, 
			\begin{equation}
				A_{1j}=A_{11}\Lambda_{n+1}^{j-1} \quad \mbox{for}  \quad \Lambda_{n+1}=  \begin{bmatrix}
					T_1 & O & \cdots & O  \\
					O & T_1 & \cdots & O\\
					\vdots & \vdots   & \cdots & \vdots\\
					O & O & \cdots  & T_1\\
				\end{bmatrix} \qquad ( 1\leq j \leq n+1).
			\end{equation} 
		\end{rem}
	One can extend the map given in (\ref{eqn6.1}) to $\mathbb{Z}^n$ for a commuting $n$-tuple of contractions and similar conclusions can be drawn as in Corollary \ref{particular example Z}. We prove a few results in this direction in the two variable setting. There are several proofs available in the literature (e.g. see \cite{Nagy, Paulsen}) of the following results. We provide a different proof of these results using the positivity of the block matrices. Recall that a pair $(T_1, T_2)$ is said to be \textit{doubly commuting} if $T_1T_2=T_2T_1$ and $T_1^*T_2=T_2T_1^*$. 
		\begin{prop}\label{example + on Z+Z}
			For a pair of doubly commuting operators $(T_1, T_2)$ acting on a Hilbert space $\mathcal{H}$, the map $T: \mathbb{Z}\oplus \mathbb{Z} \to \mathcal{B}(\mathcal{H})$ given as in $(\ref{dcmap})$ is positive definite if and only if $T_1$ and $T_2$ are contractions.
		\end{prop}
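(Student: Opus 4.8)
The plan is to prove both implications by reducing, via Theorem \ref{+ on Z+Z}, to the positivity of the block matrices $\Delta_{n+1}$ described in Remark \ref{delta for Z+Z}, and then to exploit the factored form of these matrices that the double-commuting hypothesis makes available.

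For the forward direction I would argue that positive definiteness forces $T_1$ and $T_2$ to be contractions with almost no work. Since $T(0,0)=I_\HS$, Proposition \ref{contractionsIII} (or already Lemma \ref{contractionsI}) gives $\|T(s)\|\le \|T(0,0)\|=1$ for every $s\in\mathbb{Z}\oplus\mathbb{Z}$; taking $s=(1,0)$ and $s=(0,1)$ yields $\|T_1\|\le 1$ and $\|T_2\|\le 1$. (Alternatively, one may restrict $T$ to the subgroups $\mathbb{Z}\oplus\{0\}$ and $\{0\}\oplus\mathbb{Z}$ and invoke Corollary \ref{particular example Z}.) Note that this half uses only commutativity of $T_1$ and $T_2$, not double commutativity.

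For the converse, assume $T_1$ and $T_2$ are contractions; by Theorem \ref{+ on Z+Z} it suffices to show $\Delta_{n+1}\ge 0$ for each $n$. I would use the description in Remark \ref{delta for Z+Z}: writing $M:=A_{11}$ and $\Lambda:=\Lambda_{n+1}$, the block $A_{11}$ is exactly the matrix associated with the contraction $T_2$ in Proposition \ref{+ on Z1}, so $M=M^*\ge 0$, while $\Lambda=\mathrm{diag}(T_1,\dots,T_1)$ is a contraction on $\mathcal{H}^{n+1}$. The crucial point, and the place where double commutativity enters, is that every entry of $M$ is a word in $T_2$ and $T_2^*$, each of which commutes with $T_1$ because $T_1T_2=T_2T_1$ and $T_1^*T_2=T_2T_1^*$; hence $M\Lambda=\Lambda M$, and taking adjoints (recall $M=M^*$) also $M\Lambda^*=\Lambda^* M$.

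The heart of the proof is then a clean factorization, in the spirit of Proposition \ref{+ on Z1}. Let $\mathcal{Q}$ be the $(n+1)\times(n+1)$ block matrix with $\Lambda^*$ on the first sub-diagonal and zeros elsewhere; it is nilpotent with $\|\mathcal{Q}\|\le\|T_1\|\le 1$, so $(I-\mathcal{Q})^{-1}=\sum_{k=0}^{n}\mathcal{Q}^k$. Setting $\mathcal{M}:=\mathrm{diag}(M,\dots,M)$ and comparing entries using $A_{1j}=M\Lambda^{\,j-1}$ from Remark \ref{delta for Z+Z}, one checks the identity
\[
\Delta_{n+1}=\mathcal{M}\big[(I-\mathcal{Q})^{-1}+(I-\mathcal{Q}^*)^{-1}-I\big]=\mathcal{M}R,
\]
where $R:=(I-\mathcal{Q})^{-1}+(I-\mathcal{Q}^*)^{-1}-I$ is precisely the matrix attached to the contraction $\Lambda$ in Proposition \ref{+ on Z1}, whence $R\ge 0$. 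Since $\mathcal{M}$ commutes with $\mathcal{Q}$ and $\mathcal{Q}^*$ (as $M$ commutes with $\Lambda$ and $\Lambda^*$), it commutes with $R$; and the product of two commuting positive operators is positive, because $\mathcal{M}R=(\mathcal{M}^{1\slash2}R^{1\slash2})^2$ with $\mathcal{M}^{1\slash2}R^{1\slash2}$ self-adjoint. Therefore $\Delta_{n+1}\ge 0$ for all $n$, and $T$ is positive definite. The main obstacle I anticipate is the entrywise bookkeeping needed to verify the displayed factorization, together with pinning down that the double-commuting hypothesis is used exactly where the two Toeplitz factors $\mathcal{M}$ and $R$ must commute; everything else is a direct appeal to Proposition \ref{+ on Z1}.
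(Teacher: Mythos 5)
Your proposal is correct and follows essentially the same route as the paper: both arguments reduce to the positivity of $\Delta_{n+1}$ via Theorem \ref{+ on Z+Z}, exploit the factorization $A_{1j}=A_{11}\Lambda_{n+1}^{j-1}$ of Remark \ref{delta for Z+Z} together with the commutation $A_{11}\Lambda_{n+1}=\Lambda_{n+1}A_{11}$ (and its adjoint) supplied by double commutativity, and invoke Proposition \ref{+ on Z1} both for $A_{11}\ge 0$ and for the Toeplitz matrix in $\Lambda_{n+1}$. The only cosmetic differences are at the endpoints: the paper concludes via the congruence $\Delta_{n+1}=\mathcal{M}^{1\slash 2}R\,\mathcal{M}^{1\slash 2}$ while you conclude via the product $\mathcal{M}R$ of commuting positive operators, and your forward direction through Lemma \ref{contractionsI} is a slight shortcut over the paper's restriction to the coordinate subgroups followed by Corollary \ref{particular example Z} (which you also mention as an alternative).
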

		\begin{proof}
			It follows from Proposition $\ref{+ on Z1}$ that $A_{11}$ in $(\ref{A_1j})$ is positive if and only if $T_2$ is a contraction. Assume that $T_1$ and $T_2$ are contractions. By Remark \ref{delta for Z+Z} it suffices to prove that $\Delta_{n+1}$ is positive for every $n \in \mathbb{N}.$ Since $T_1$ and $T_2$ doubly commute, we have $A_{11}\Lambda_{n+1}=\Lambda_{n+1}A_{11}$. Also, $A_{11}\Lambda_{n+1}^*=\Lambda_{n+1}^*A_{11}$ as $A_{11}$ is a positive operator. So, we have that
			
			\begin{equation*}
				\begin{split}
					\Delta_{n+1} 
					&=
					\begin{bmatrix}
						A_{11} & A_{12} & \cdots & A_{1,n+1} \\
						A_{21} & A_{22} & \cdots & A_{2,n+1} \\
						\vdots & \vdots & \cdots &  \vdots\\
						A_{n+1, 1} & A_{n+1, 2} & \cdots & A_{n+1, n+1} \\
					\end{bmatrix} \\
					&= 
					\begin{bmatrix}
						A_{11} & A_{11}\Lambda_{n+1} & \cdots & A_{11}\Lambda_{n+1}^n \\
						A_{11}\Lambda_{n+1}^* & A_{11} & \cdots & A_{11}\Lambda_{n+1}^{n-1} \\
						\vdots & \vdots & \cdots &  \vdots\\
						A_{11}\Lambda_{n+1}^{*n} & A_{11}\Lambda_{n+1}^{*(n-1)} & \cdots & A_{11} \\
					\end{bmatrix} \ \mbox{[By (\ref{A_1j})]}\\
					&=
					\begin{bmatrix}
						A_{11}^{1\slash2} & O & \cdots & O  \\
						O &  A_{11}^{1\slash2} & \cdots & O\\
						\vdots & \vdots   & \cdots & \vdots\\
						O & O & \cdots  &  A_{11}^{1\slash2}\\
					\end{bmatrix}
					\begin{bmatrix}
						I_\HS & \Lambda_{n+1} & \cdots & \Lambda_{n+1}^n \\
						\Lambda_{n+1}^* & I_\HS & \cdots & \Lambda_{n+1}^{n-1} \\
						\vdots & \vdots & \cdots &  \vdots\\
						\Lambda_{n+1}^{*n} & \Lambda_{n+1}^{*(n-1)} & \cdots & I_\HS \\
					\end{bmatrix} 
					\begin{bmatrix}
						A_{11}^{1\slash2} & O & \cdots & O  \\
						O &  A_{11}^{1\slash2} & \cdots & O\\
						\vdots & \vdots   & \cdots & \vdots\\
						O & O & \cdots  &  A_{11}^{1\slash2}\\
					\end{bmatrix}.
				\end{split}
			\end{equation*}
			Since $T_1$ is a contraction and $\Lambda_{n+1}$ is a contraction, Proposition $\ref{+ on Z}$ yields that the block matrix
			\[
			\begin{bmatrix}
				I_\HS & \Lambda_{n+1} & \cdots & \Lambda_{n+1}^n \\
				\Lambda_{n+1}^* & I_\HS & \cdots & \Lambda_{n+1}^{n-1} \\
				\vdots & \vdots & \cdots &  \vdots\\
				\Lambda_{n+1}^{*n} & \Lambda_{n+1}^{*(n-1)} & \cdots & I_\HS \\
			\end{bmatrix}
			\]
			is positive. Consequently, we have that $\Delta_{n+1}$ is positive for every $n\in \mathbb{N}$ and from Theorem $\ref{+ on Z+Z}$, we have that $T$ is a positive definite function.
			Conversely, if $T$ is a positive definite function then we have  
			\[
			T_1(m):=T(m,0)=\left\{
			\begin{array}{ll}
				T_1^m & m \geq 1\\
				I_\HS & m = 0\\
				T_1^{*|m|} & m <0\\
			\end{array} 
			\right. \quad{\mbox{and}} \quad 
			T_2(n):=T(0,n)=\left\{
			\begin{array}{ll}
				T_2^n & n \geq 1\\
				I_\HS & n = 0\\
				T_2^{*|n|} & n <0\\
			\end{array} 
			\right., 
			\]
			are positive definite functions on $\mathbb{Z}.$ By Corollary \ref{particular example Z}, $T_1$ and $T_2$ are contractions.
		\end{proof}
		 Next, we prove that the map $T$ as in $(\ref{dcmap})$ satisfies Brehmer positivity for a pair of commuting operators if $T$ is positive definite. Recall that a pair of commuting operators $(T_1, T_2)$ acting on a Hilbert space $\HS$ is said to satisfy Brehmer positivity if $T_1, T_2$ are contractions and $I_\HS-T_1^*T_1-T_2^*T_2+(T_1T_2)^*T_1T_2 \geq 0$.
		\begin{prop}
			Let $(T_1, T_2)$ be a pair of commuting operators  acting on a Hilbert space $\mathcal{H}$. If the map $T: \mathbb{Z}\oplus \mathbb{Z} \to \mathcal{B}(\mathcal{H})$ given as in $(\ref{dcmap})$ is positive definite, then $(T_1, T_2)$ satisfies Brehmer positivity.
		\end{prop}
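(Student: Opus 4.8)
The plan is to read off Brehmer positivity from the positivity of the \emph{single} block matrix $\Delta_2$ (the case $n=1$ of Theorem \ref{+ on Z+Z}), by testing that positive form against one carefully chosen vector, after first recording that $T_1,T_2$ are contractions.

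The contraction claim I would handle exactly as in the converse part of Proposition \ref{example + on Z+Z}: restricting the positive definite $T$ of $(\ref{dcmap})$ to the two coordinate axes yields the $\mathbb{Z}$-valued functions $m\mapsto T(m,0)$ and $n\mapsto T(0,n)$, which are positive definite on $\mathbb{Z}$ and are precisely the maps of the form $(\ref{eqn6.1})$ for $T_1$ and $T_2$. Corollary \ref{particular example Z} then forces $\|T_1\|\le 1$ and $\|T_2\|\le 1$.

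For the inequality, since $T$ is positive definite, Remark \ref{delta for Z+Z} (equation $(\ref{Z+Z particular})$ with $n=1$) gives
\[
\Delta_2=\begin{bmatrix}
I_\HS & T_2 & T_1 & T_1T_2\\
T_2^* & I_\HS & T_2^*T_1 & T_1\\
T_1^* & T_1^*T_2 & I_\HS & T_2\\
T_2^*T_1^* & T_1^* & T_2^* & I_\HS
\end{bmatrix}\ge 0 .
\]
Fixing $x\in\mathcal{H}$, I would feed in the test vector $v=(T_1T_2x,\,-T_1x,\,-T_2x,\,x)\in\mathcal{H}^4$ and expand $\langle\Delta_2 v,v\rangle\ge 0$. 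The diagonal contributes $\|T_1T_2x\|^2+\|T_1x\|^2+\|T_2x\|^2+\|x\|^2$, while the six Hermitian off-diagonal pairs, simplified using commutativity $T_1T_2=T_2T_1$ (so that the mixed entry gives $T_2^*T_1T_2=T_2^*T_2T_1$ and hence $2\|T_1T_2x\|^2$), contribute $(-2-2+2+2)\|T_1T_2x\|^2-2\|T_1x\|^2-2\|T_2x\|^2$. Thus every $\|T_1T_2x\|^2$ term collapses to a single copy and
\[
0\le\langle\Delta_2 v,v\rangle=\|x\|^2-\|T_1x\|^2-\|T_2x\|^2+\|T_1T_2x\|^2=\bigl\langle\bigl(I_\HS-T_1^*T_1-T_2^*T_2+(T_1T_2)^*T_1T_2\bigr)x,\,x\bigr\rangle .
\]
As $x\in\mathcal{H}$ is arbitrary, this is exactly Brehmer positivity.

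The only genuinely non-routine step is locating the right test vector, because the coefficient of $\|T_1T_2x\|^2$ is delicate. The naive ``product'' choice $(0,-T_1x,-T_2x,x)$ leaves a surplus $+2\|T_1T_2x\|^2$ coming from the mixed entry $T_2^*T_1$, and no scalar rescaling of the entries repairs it; placing $T_1T_2x$ (rather than $0$) in the slot indexed by $(0,0)$ is precisely what cancels that surplus, and verifying the cancellation is where commutativity is used. I expect no further obstacle, since the remainder is a finite expansion of a single $4\times4$ positive form.
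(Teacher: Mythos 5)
Your proof is correct and is essentially identical to the paper's: the paper also obtains $T_1,T_2$ as contractions by restricting to the coordinate axes and invoking Corollary \ref{particular example Z}, and then tests the positivity of the same block matrix $\Delta_2$ against exactly the same vector $(T_1T_2x,\,-T_1x,\,-T_2x,\,x)$ to extract $\langle(I_\HS-T_1^*T_1-T_2^*T_2+(T_1T_2)^*T_1T_2)x,x\rangle\ge 0$. The only cosmetic difference is that you expand the quadratic form by Hermitian pairs while the paper multiplies out row by row.
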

		\begin{proof}
			Since $T$ is positive definite, Theorem \ref{+ on Z+Z} and Remark \ref{delta for Z+Z} yield that the block matrix 
			\begin{equation*}
				\Delta_2=\begin{bmatrix}
					I_\HS & T_2 & T_1 & T_1T_2\\
					T_2^* & I_\HS & T_2^*T_1 & T_1\\
					T_1^* & T_1^*T_2 & I_\HS & T_2\\
					T_1^*T_2^* & T_1^* & T_2^* & I_\HS\\
				\end{bmatrix}
			\end{equation*}
			is positive. Given $h \in \mathcal{H},$ define $x=\begin{bmatrix}
				T_1T_2h & -T_1h & -T_2h & h
			\end{bmatrix}^t$ in $\mathcal{H}^4$ for which we have that
			\begin{equation*}
				\begin{split}
					0 \leq \langle \Delta_2 x, x \rangle
					&=
					\bigg \langle \begin{bmatrix}
						I_\HS & T_2 & T_1 & T_1T_2\\
						T_2^* & I_\HS & T_2^*T_1 & T_1\\
						T_1^* & T_1^*T_2 & I_\HS & T_2\\
						T_1^*T_2^* & T_1^* & T_2^* & I_\HS\\
					\end{bmatrix}
					\begin{bmatrix}
						T_1T_2h\\
						-T_1h\\
						-T_2h\\
						h
					\end{bmatrix},  \begin{bmatrix}
						T_1T_2h\\
						-T_1h\\
						-T_2h\\
						h
					\end{bmatrix}
					\bigg \rangle  \\
					&=    
					\bigg \langle 
					\begin{bmatrix}
						0\\
						0\\
						0\\
						T_1^*T_2^*T_1T_2h-T_1^*T_1h-T_2^*T_2h+h
					\end{bmatrix},  \begin{bmatrix}
						T_1T_2h\\
						-T_1h\\
						-T_2h\\
						h
					\end{bmatrix}
					\bigg \rangle  \\
					&=\bigg \langle (I_\HS-T_1^*T_1-T_2^*T_2+(T_1T_2)^*T_1T_2)h, h \bigg \rangle .
				\end{split}
			\end{equation*}
			Since $h \in \mathcal{H}$ is arbitrary, we have that $I_\HS-T_1^*T_1-T_2^*T_2+(T_1T_2)^*T_1T_2 \geq 0.$ As seen previously in the proof of Proposition \ref{example + on Z+Z}, we have that $T_1$ and $T_2$ are contractions.
		\end{proof}
		

\section{Structure Theorem for Unitary representations on a finite group}\label{structure thm}

\vspace{0.3cm}		
	
\noindent We have seen that Naimark's Theorem relates the positive definite functions to the unitary representations and vice-versa. In the previous sections, we have characterized the positive definite functions on some finite and infinite groups. In this Section, we characterize the unitary representations acting on a finite group such that the image consists of commuting unitaries. Such a unitary representation is called a commutative unitary representation. We begin with an elementary result which is a consequence of spectral theorem.

\begin{thm}\label{Unitary}
Let $\underline{U}=(U_1,\dotsc, U_m)$ be a tuple of commuting operators acting on
a Hilbert space $\mathcal{H}$ such that its Taylor-joint spectrum, $\sigma_{T}(\underline{U})$ contains $k$ many elements in $\C^m$. Then $\underline{U}$ consists of unitary operators if and only if there is a
commuting $k$-tuple of mutually orthogonal projections
$P=\left(P_1, \dotsc, P_k\right)$ and scalars $\{\alpha_{ij} \colon\;1\leq i\leq
k, 1\leq j\leq m\} \subseteq \mathbb{T}$ such that $P_1+ \dotsc + P_k=I_\HS$ and
\begin{equation}
	U_j= \sum_{i=1}^{k}\alpha_{ij}P_{i},
\end{equation}
for $j=1, \dotsc, m$.
\end{thm}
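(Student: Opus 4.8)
The plan is to treat the two implications separately, reserving the spectral theorem for the forward direction only, since the converse is a direct computation.

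For the backward implication I would argue by hand. Suppose $U_j=\sum_{i=1}^{k}\alpha_{ij}P_i$ with the $P_i$ mutually orthogonal projections summing to $I_{\mathcal H}$ and each $\alpha_{ij}\in\mathbb{T}$. Using $P_iP_l=\delta_{il}P_i$ and $P_i=P_i^*$, I would compute $U_j^*=\sum_i\overline{\alpha_{ij}}P_i$, hence $U_j^*U_j=\sum_i|\alpha_{ij}|^2P_i=\sum_iP_i=I_{\mathcal H}$, and symmetrically $U_jU_j^*=I_{\mathcal H}$. Thus each $U_j$ is unitary, and the $U_j$ commute because distinct $P_i$ are mutually orthogonal. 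This direction is routine.

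For the forward implication, the key observation is that a commuting tuple of unitaries is a commuting tuple of normal operators, so the spectral theorem for commuting normal operators applies: there is a projection-valued joint spectral measure $E$ on the Borel subsets of $\mathbb{C}^m$, supported on the joint spectrum, with $U_j=\int z_j\,dE(z)$ for each $j$. For commuting normal operators the Taylor-joint spectrum coincides with the support of $E$, so by hypothesis $E$ is carried by the finite set $\sigma_T(\underline{U})=\{\lambda_1,\dotsc,\lambda_k\}$ with $\lambda_i=(\alpha_{i1},\dotsc,\alpha_{im})$. Setting $P_i=E(\{\lambda_i\})$ produces projections that are mutually orthogonal (disjoint Borel sets), pairwise commuting (spectral projections commute), and satisfy $\sum_{i=1}^{k}P_i=E(\sigma_T(\underline{U}))=I_{\mathcal H}$. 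Integrating against this atomic measure then yields $U_j=\sum_{i=1}^{k}\alpha_{ij}P_i$ with $\alpha_{ij}=(\lambda_i)_j$.

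It remains to check $\alpha_{ij}\in\mathbb{T}$: since $U_j$ is unitary its spectrum lies in $\mathbb{T}$, and the $j$-th coordinate projection of the joint spectrum is contained in $\sigma(U_j)$, so each $\alpha_{ij}=(\lambda_i)_j$ lies on the unit circle. I expect the main obstacle to be the identification of the Taylor-joint spectrum with the support of the joint spectral measure, as this is the one place where a nontrivial structural fact about commuting normal tuples is invoked rather than verified directly. If that identification felt too heavy, I would instead pass to the unital commutative $C^*$-algebra generated by $\{U_j,U_j^*\}$, identify its character space with the joint spectrum via $\chi\mapsto(\chi(U_1),\dotsc,\chi(U_m))$, and read off the $P_i$ as the minimal idempotents of $C(\{\lambda_1,\dotsc,\lambda_k\})\cong\mathbb{C}^k$, which gives the same decomposition with an entirely Gelfand-theoretic argument.
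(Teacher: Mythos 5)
Your proposal is correct and takes essentially the approach the paper intends: the paper states this theorem with no proof at all, introducing it only as ``an elementary result which is a consequence of spectral theorem,'' and your argument---the joint spectral measure of the commuting tuple of normal (unitary) operators must be atomic because the Taylor joint spectrum, which coincides with the support of that measure, is finite, so the atoms $P_i=E(\{\lambda_i\})$ give the decomposition, with the converse a routine computation---is exactly the spectral-theorem argument the paper leaves implicit. The one nontrivial ingredient you invoke, the identification of the Taylor spectrum of a commuting normal tuple with the support of its joint spectral measure (equivalently, with the Gelfand joint spectrum of the generated commutative $C^*$-algebra, as in your alternative route), is the standard fact on which the paper's unproved assertion rests, so there is no gap.
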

\begin{lem}\label{Unitary_Spectrum}
	Let $U\colon G\to\mathcal{B}(\mathcal{H})$ be a unitary representation acting on a finite group $G$ of order $m$. Then the spectrum of $U(s)$ is a subset of the group
	$\mu_m=\{z \in \mathbb{C}\ :\ z^m=1\}$ for every $s \in G$.
\end{lem}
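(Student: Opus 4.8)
The plan is to combine a single group-theoretic fact with the spectral mapping theorem, so that the proof reduces to a one-line observation once the right polynomial is identified.

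First I would use the finiteness of $G$. Since $|G| = m$, Lagrange's theorem gives that the order of any $s \in G$ divides $m$, and hence $s^m = e$ for \emph{every} $s \in G$. Because $U$ is a unitary representation, it is in particular a group homomorphism with $U(e) = I_\HS$, so
\[
U(s)^m = U(s^m) = U(e) = I_\HS \qquad (s \in G).
\]
Thus each $U(s)$ is annihilated by the polynomial $p(z) = z^m - 1$.

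Next I would invoke the spectral mapping theorem $\sigma\big(p(U(s))\big) = p\big(\sigma(U(s))\big)$, valid for any bounded operator on a complex Hilbert space. If $\lambda \in \sigma(U(s))$, then $p(\lambda) \in \sigma\big(p(U(s))\big) = \sigma(0) = \{0\}$, so $\lambda^m - 1 = 0$, i.e. $\lambda \in \mu_m$. Since $\lambda$ was arbitrary, $\sigma(U(s)) \subseteq \mu_m$, which is exactly the claim.

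I do not expect any substantial obstacle here, as the lemma is elementary; the only point deserving care is the step $s^m = e$, which rests on Lagrange's theorem rather than on $s$ having order exactly $m$. As a cross-check one could argue differently: each $U(s)$ is unitary, hence normal, so $\sigma(U(s)) \subseteq \T$, and combining this with $U(s)^m = I_\HS$ again forces $\lambda^m = 1$ on the spectrum. Both routes are short, and I would present the spectral-mapping version for brevity since it does not even require normality.
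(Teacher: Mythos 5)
Your proof is correct and follows essentially the same route as the paper: both establish $U(s)^m = U(s^m) = U(e) = I_\HS$ (the paper leaves Lagrange's theorem implicit, which you rightly make explicit) and then conclude via the spectral mapping theorem applied to $p(z) = z^m - 1$. Your version is simply a more carefully spelled-out rendering of the paper's two-line argument.
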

	\begin{proof}
		Since $U$ is a group homomorphism, it follows that $U(s)^m=U(s^m)=U(e)=I_\HS$ for every $s \in G$. The desired conclusion follows from the spectral mapping theorem (see .
	\end{proof}
The following is the main result of this section.
\begin{thm}\label{Unitary_Srtucture1}
	Let $G$ be a finite group of order $m$ and $U\colon G\to\mathcal{B}(\mathcal{H})$ be an operator-valued function. Then $U$ is a commutative unitary representation if and only
	if there are finitely many orthogonal projections
	$P_1, \dotsc,P_k$ and scalars $\{\lambda_i(s)\in\mathbb{T}: 1\leq i\leq k, s\in G\}$ such that
	\begin{enumerate}
		\item $P_1 + \dotsc +P_k=I_\HS$ (resolution of identity);
		\vspace{0.15cm}
	\item $U(s)=\sum_{i=1}^{k}\lambda_i(s)P_i$  for all $s \in G;$
	\vspace{0.15cm}
	\item $\lambda_{i}(st)=\lambda_{i}(s)\lambda_{i}(t)$ for all $s,t\in G$ and $1\leq
	i\leq k.$
\end{enumerate}
	Moreover,	$\sigma(U(s))=\{\lambda_1(s),\dotsc,\lambda_k(s)\}$ for every $s\in G.$
\end{thm}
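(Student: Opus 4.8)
The plan is to reduce the whole statement to the joint spectral decomposition of commuting unitaries furnished by Theorem \ref{Unitary}, and then to show that the group homomorphism property of $U$ is exactly equivalent to the multiplicativity condition (3) on the scalars. Throughout, the key structural input is that the projections appearing in such a decomposition are nonzero and mutually orthogonal.

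For the forward implication, suppose $U$ is a commutative unitary representation and enumerate $G=\{s_1=e, s_2,\dotsc, s_m\}$. Then $\underline{U}=(U(s_1),\dotsc,U(s_m))$ is an $m$-tuple of commuting unitaries, so Theorem \ref{Unitary} applies and yields mutually orthogonal nonzero projections $P_1,\dotsc,P_k$ with $P_1+\dotsc+P_k=I_\HS$ together with scalars $\alpha_{ij}\in\T$ such that $U(s_j)=\sum_{i=1}^{k}\alpha_{ij}P_i$. Defining $\lambda_i(s_j):=\alpha_{ij}$ immediately gives (1) and (2). To obtain (3) I would expand $U(s)U(t)$ using the orthogonality relations $P_iP_{i'}=\delta_{ii'}P_i$, which collapses the double sum to $U(s)U(t)=\sum_i\lambda_i(s)\lambda_i(t)P_i$; comparing this with $U(st)=\sum_i\lambda_i(st)P_i$ and restricting both sides to the range of each $P_i$ (legitimate precisely because the $P_i$ are nonzero and mutually orthogonal, so coefficients in such a decomposition are uniquely determined) forces $\lambda_i(st)=\lambda_i(s)\lambda_i(t)$ for every $i$.

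For the converse, assume (1)--(3) hold. First I would record the elementary fact that orthogonal projections with $\sum_i P_i=I_\HS$ are automatically mutually orthogonal: for $x$ in the range of $P_1$ one has $\|x\|^2=\sum_i\langle P_ix,x\rangle=\sum_i\|P_ix\|^2$, which forces $P_ix=0$ for $i\neq 1$, and the same argument applies to each index. Granting mutual orthogonality, each $U(s)=\sum_i\lambda_i(s)P_i$ satisfies $U(s)^*U(s)=\sum_i|\lambda_i(s)|^2P_i=\sum_iP_i=I_\HS$ and likewise $U(s)U(s)^*=I_\HS$, so $U(s)$ is unitary. Setting $s=t=e$ in (3) gives $\lambda_i(e)=\lambda_i(e)^2$, and since $\lambda_i(e)\in\T$ this yields $\lambda_i(e)=1$, whence $U(e)=\sum_iP_i=I_\HS$. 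Finally, orthogonality together with (3) gives $U(s)U(t)=\sum_i\lambda_i(s)\lambda_i(t)P_i=\sum_i\lambda_i(st)P_i=U(st)$, and the symmetry of this computation in $s,t$ shows $U(s)U(t)=U(t)U(s)$; hence $U$ is a commutative unitary representation.

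The ``moreover'' clause is then immediate: since $U(s)=\sum_i\lambda_i(s)P_i$ is a decomposition into nonzero mutually orthogonal projections summing to $I_\HS$, its spectrum is exactly the set of coefficients $\{\lambda_1(s),\dotsc,\lambda_k(s)\}$. The step that requires genuine care --- and the one I expect to be the main obstacle --- is the passage in the forward direction from the operator identity $\sum_i\lambda_i(st)P_i=\sum_i\lambda_i(s)\lambda_i(t)P_i$ to the scalar identities $\lambda_i(st)=\lambda_i(s)\lambda_i(t)$; this rests entirely on the projections produced by Theorem \ref{Unitary} being nonzero (each corresponding to a distinct point of the Taylor joint spectrum) and mutually orthogonal. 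If desired, Lemma \ref{Unitary_Spectrum} can be invoked to strengthen the conclusion by noting that each $\lambda_i(s)$ is in fact an $m$-th root of unity, refining $\lambda_i(s)\in\T$.
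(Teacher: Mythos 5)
Your overall route is the same as the paper's: reduce to the spectral decomposition of Theorem \ref{Unitary}, read off conditions (1)--(2), and obtain (3) by expanding $U(s)U(t)$ against the mutually orthogonal nonzero projections and comparing coefficients. Your treatment of the converse and of the ``moreover'' clause is correct, and in places more careful than the paper's: the observation that self-adjoint projections summing to $I_\HS$ are automatically mutually orthogonal, and the explicit verification of unitarity, of $\lambda_i(e)=1$, and of multiplicativity, fill in what the paper compresses into ``follows directly from Theorem \ref{Unitary}.''

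There is, however, a genuine gap in the forward direction: you assert that since $\underline{U}=(U(s_1),\dotsc,U(s_m))$ is a tuple of commuting unitaries, ``Theorem \ref{Unitary} applies.'' As justified, it does not. Theorem \ref{Unitary} carries the hypothesis that the Taylor joint spectrum $\sigma_T(\underline{U})$ is a \emph{finite} set of $k$ points, and commuting unitaries in general violate this: a single unitary whose spectrum is all of $\T$ (a bilateral shift, say) admits no decomposition $\sum_{i=1}^{k}\alpha_i P_i$ into finitely many projections. Establishing finiteness of the joint spectrum is exactly where the hypothesis that $G$ is finite of order $m$ enters, and it is the essential---not optional---role of Lemma \ref{Unitary_Spectrum}, which you cite only at the end as a cosmetic refinement of the scalars to $m$-th roots of unity. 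The paper's proof supplies this step first: since $U$ is a homomorphism, $U(s)^m=U(s^m)=U(e)=I_\HS$, so the spectral mapping theorem gives $\sigma(U(s))\subseteq\mu_m=\{z\in\C : z^m=1\}$ for every $s\in G$, and the projection property of the Taylor spectrum then yields
\[
\sigma_T\bigl(U(g_1),\dotsc,U(g_m)\bigr)\subseteq \sigma(U(g_1))\times\dotsb\times\sigma(U(g_m))\subseteq \mu_m\times\dotsb\times\mu_m,
\]
a finite set. Once this is inserted before your invocation of Theorem \ref{Unitary}, the rest of your argument goes through as written.
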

\begin{proof}
		Let $G=\{g_1,\dotsc,g_m\}$ with the identity $g_1=e.$ Let $U$ be a commutative unitary representation. Since each $U(g_j)$ is a unitary, it follows from Lemma \ref{Unitary_Spectrum} that 
		\[
		\sigma_T(U(g_1), \dotsc, U(g_m)) \subseteq \sigma(U(g_1)) \dotsc \times \sigma(U(g_m)) \subseteq \mu_m  \times \dotsc \times \mu_m \ \mbox{($m$-times)},		
		\]
which is a finite subset of $\mathbb{T}^m.$ By Corollary \ref{Unitary}, there are  mutually orthogonal projections
		$P_1, \dotsc, P_k$ and scalars $\{\alpha_{ij} \colon\;\alpha_{ij}\in\mathbb{T}
		\mbox{ for all }1\leq i\leq k, 1\leq j\leq m\}$ for some $k \in \mathbb{N}$  such that
		
		\begin{equation}\label{Equn3}
			U(g_j)=\sum_{i=1}^{k}\alpha_{ij}P_{i} \quad \text{ and
			} \quad \sigma(U(g_j))=\{\alpha_{1j}, \dotsc,\alpha_{kj}\} \quad \mbox{($
				1 \leq j \leq m$) }.
		\end{equation}
For $1\leq i\leq k$, define $\lambda_i\colon G\to\mathbb{T}$ by
		$\lambda_i(g_j)=\alpha_{ij}$ for all $1\leq j\leq m. $ Then (\ref{Equn3})
		can be written as
		
		\begin{equation}\label{Equn4}
			U(g_j)=\sum_{i=1}^{k}\lambda_i(g_j) P_{i} \qquad \text{and} \qquad \sigma(U(g_j))=\{\lambda_1(g_j),\dotsc,\lambda_k(g_j)\} 
		\end{equation}
		for $1 \leq j \leq m$.
Since $P_i's$ are mutually orthogonal projections and each $|\lambda_i(g_j)|=1$, we have 

		\begin{equation*}
			\begin{split}
				I_\HS=U(g_j)U(g_j)^* =\left(\sum_{i=1}^{k}\lambda_{i}           
				(g_j)P_{i}\right)\left(\sum_{i=1}^{k}\overline{\lambda_{i}(g_j)}P_{i}\right)
				=\sum_{i=1}^{k}\vert{\lambda}_{i}(g_j)\vert ^2 P_{i}
				= P_1+\dotsc+P_k,
			\end{split}
		\end{equation*}
for every $j=1, \dotsc, k.$ It only remains to show that each $\lambda_i$ defines a homomorphism. Since $U$ is a group homomorphism, we have that 
\begin{equation}\label{lambda_hom}
	\begin{split}
	\sum_{i=1}^{k}\lambda_i(st) P_{i}
	=U(st) =U(s)U(t)
	=\bigg(\sum_{i=1}^{k}\lambda_i(s) P_{i}\bigg)	\cdot \bigg(\sum_{i=1}^{k}\lambda_i(t) P_{i}\bigg)
	=\sum_{i=1}^{k}\lambda_i(s)\lambda_l(t) P_{i}, \ \ \ \ \ 
	\end{split}
\end{equation}
for every $s,t \in G.$ Given $j$, pre-multiplying (\ref{lambda_hom}) with $P_j$, we get that
\[
\lambda_j(st)P_j=\lambda_j(s)\lambda_j(t)P_j,
\]
for every $s,t \in G.$ Since $P_j$'s are non-zero projections, we have that $\lambda_j(st)=\lambda_j(s)\lambda_j(t)$ for every $j=1, \dotsc, k$ and $s,t \in G.$ 
		 The converse part follows directly from Theorem \ref{Unitary}.
	\end{proof}
\begin{cor}\label{Abelian_Unitary} Let $U\colon G\to \mathcal{B}(\mathcal{H})$ be a
	unitary representation. Then $U $ is a commutative unitary representation if and only if
	\[
	U|_{[G,G]}=I_\HS,
	\]
	where, $[G, G]=\langle sts^{-1}t^{-1}\;\colon s,t\in G\rangle$ is
	the commutator subgroup of the group $G.$
	
	\begin{proof}
		 This is an easy consequence of the fact that $U$ is a group homomorphism.
		\begin{equation*}
			\begin{split}
				U(s)U(t)=U(t)U(s)  \iff U(st)=U(ts)
				 \iff U(ts)^{-1}U(st)=I_\HS
				\iff U(s^{-1}t^{-1}st)=I_\HS,
			\end{split}		
		\end{equation*}
for every $s,t \in G.$	
	\end{proof}
\end{cor}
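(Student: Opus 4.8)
The plan is to reduce the corollary to a single chain of equivalences driven entirely by the fact that $U$ is a group homomorphism with $U(e)=I_\HS$ and $U(g^{-1})=U(g)^{-1}$. First I would fix $s,t\in G$ and record that commutation of the two image operators is the same as $U$ being trivial on the corresponding group commutator:
\[
U(s)U(t)=U(t)U(s)\ \iff\ U(st)=U(ts)\ \iff\ U(st)U(ts)^{-1}=I_\HS\ \iff\ U(sts^{-1}t^{-1})=I_\HS,
\]
where the first step uses multiplicativity $U(s)U(t)=U(st)$, the second that each $U(g)$ is invertible, and the last that $(ts)^{-1}=s^{-1}t^{-1}$. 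This one identity is the whole engine of the argument, so I would establish it up front and then read it in both directions.

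For the forward implication I would assume $U$ is a commutative unitary representation. Then the chain above gives $U(sts^{-1}t^{-1})=I_\HS$ for all $s,t\in G$; that is, $U$ sends every generator of $[G,G]=\langle sts^{-1}t^{-1}:s,t\in G\rangle$ to $I_\HS$. Since $U$ is a homomorphism, triviality on a generating set propagates to the whole subgroup: any element of $[G,G]$ is a finite product of such generators and their inverses, and $U$ carries each factor to $I_\HS$. Hence $U|_{[G,G]}=I_\HS$.

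For the converse I would assume $U|_{[G,G]}=I_\HS$. Every commutator $sts^{-1}t^{-1}$ lies in $[G,G]$, so $U(sts^{-1}t^{-1})=I_\HS$ for all $s,t$, and reading the same chain of equivalences from right to left yields $U(s)U(t)=U(t)U(s)$ for every $s,t\in G$. Thus the representation is commutative.

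I do not anticipate a genuine obstacle: the content is simply the translation between the operator identity and the subgroup condition. The only step meriting an explicit word is the passage from \emph{trivial on the generators} to \emph{trivial on all of $[G,G]$} in the forward direction, which is immediate once one invokes the multiplicativity of $U$.
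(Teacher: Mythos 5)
Your proof is correct and follows essentially the same route as the paper: both rest on the single chain of equivalences $U(s)U(t)=U(t)U(s)\iff U(st)=U(ts)\iff U$ kills the commutator, derived from multiplicativity and invertibility of the $U(g)$. If anything, you are slightly more careful than the paper, which leaves implicit the step that triviality on the generating commutators propagates to all of $[G,G]$ — a point you rightly make explicit.
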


\begin{eg}[Unitary representation on cyclic groups]\label{UZ_n} Let $G$ be a cyclic group generated by $g_0$ and let  $U: G \to \mathcal{B}(\mathcal{H})$ be a  unitary representation. Then $U(g)=U(g_0^m)=U(g_0)^m$ for every $g \in G$ and some $m \in \mathbb{N}$. Thus, $U$ is completely determined by the choice of $U(g_0).$ 
	\qed
\end{eg}


\begin{eg}[Commutative unitary representation on symmetric groups]\label{US_n} Let $S_n$ be the symmetric group on $n$ elements for $n \geq 3$. Let $U\colon S_n\to\mathcal{B}(\mathcal{H})$ be a commutative unitary representation.
	The commutator subgroup $[S_n,S_n]$ is the normal subgroup $A_n$
	consisting of all even permutaions and $S_n/A_n\cong\mathbb{Z}_2.$ Since $U$ is
	commutative, Corollary \ref{Abelian_Unitary} yields that $U(\sigma)=I_\HS$ for all
	$\sigma\in A_n.$ Consequently, there is a well-defined unitary representation $\widetilde{U}\colon
	S_n/A_n\to\mathcal{B}(\mathcal{H}), \sigma A_n \mapsto U(\sigma)$ such that the following diagram commutes.
	\[
	\xymatrix
	{ 
		S_n\ar[r]^{U}\ar[d]_{\pi} & \mathcal{B}(\mathcal{H})\\
		S_n/A_n\ar[ur]_{\widetilde{U} }
	}
	\] 
Any unitary representation on a group of order $2$ is completely determined by a self-adjoint unitary. Thus, $\widetilde{U}$ is uniquely determined by a self-adjoint unitary operator, say, $U_0$. Hence
\[
 U(\sigma)=\begin{cases}I_\HS & {\text{ if }}
		~\sigma\in ~A_n \\ U_0 & {\text{ if }} ~ \sigma \notin ~A_n .
		\end{cases} 
\] 
\qed
\end{eg}

\begin{eg}[Commutative unitary representation on the dihedral groups]\label{UD_n}
	Let $D_n$ be the dihedral group of order $2n$ which has the following representation.
	\[
	D_n=\langle r,s \ | \ r^n=s^2=(sr)^2=e \rangle.
	\]
The commutator subgroup of $D_n$ is the cyclic subgroup
	generated by $r^2 $ and we have that
	\[
	 [D_n,D_n]=\begin{cases}
		\langle r^2\rangle=\langle r\rangle & {\text{ if $n$ is odd}}\\
		\langle r^2\rangle & {\text{ if $n$ is even . }} \end{cases}
	\]
	Let $U\colon D_{n}\to\mathcal{B}(\mathcal{H})$ be a commutative unitary representation. We consider two different cases depending on whether $n$ is odd or even.
\begin{enumerate}
	\item Let $n$ be odd. Then $D_n/[ D_n,D_n ]\cong\mathbb{Z}_2.$ Since $U$ is commutative,
	Corollary \ref{Abelian_Unitary} implies that $U(r^j)=I_\HS$ for  $j=1, \dotsc, n.$ Consequently, there is a well-defined unitary representation $\widetilde{U}\colon
	D_n/[D_n,D_n]\to\mathcal{B}(\mathcal{H})$ such that the following diagram commutes.
	\[
	\xymatrix
	{ 
		D_n\ar[r]^{U}\ar[d]_{\pi} & \mathcal{B}(\mathcal{H})\\
		D_n/[D_n,D_n]\ar[ur]_{\widetilde{U} } 
	}
	\] 
	Indeed, $\widetilde{U}(x [D_n,D_n]) =U(x)$ for all $x\in D_n.$ Since $\widetilde{U}$ is uniquely determined by a self adjoint unitary operator $U_0$, we have that
	\[
	U(x)=\begin{cases}
		I_\HS & {\text{ if }} ~x\in ~\langle r \rangle \\
		U_0 & {\text{ if }} ~x\notin ~\langle r \rangle.
	\end{cases}
\]
\item 	Let $n$ be even. The commutativity of $\{U(x): x \in D_n\}$ and Corollary \ref{Abelian_Unitary} implies that $U(x)=I_\HS$ for all $x\in [D_n, D_n]=\langle r^2 \rangle.$ Since $D_n$ is generated by two element $r$ and $s,$ it is enough to study the
images of the generators $r$ and $s.$ We denote $U(r)=U_r$ and $U(s)=U_s$ for which we have that $(U_r)^n=U(r^n)=I_\HS$ and $(U(s))^2=U(s^2)=I_\HS.$ Consequently, we get that
 \[
 U(x)=\begin{cases}
	I_\HS & {\text{ if }} ~x \in \langle r^2 \rangle  \\ 
	U_r & {\text{ if }} ~x=r,r^3,r^5,\dotsc,r^{2n-1} \\
	U_s & {\text{ if }} ~x=r^{j}s \mbox{ and $j$ is even}\\
	U_rU_s & {\text {if }} ~x=r^{j}s\mbox{ and $j$ is odd},
\end{cases}
\]
where $U_r$ and $U_s$ are commuting self-adjoint unitaries.
\end{enumerate}	
\qed
\end{eg}


\section{Powers of positive definite functions and unitary representations}\label{power dilation}

\vspace{0.3cm}		
	
\noindent Given a map $T: G \to \mathcal{B}(\mathcal{H})$ and $n \in \mathbb{N},$ one can define another map $T_{n}: G \to \mathcal{B}(\mathcal{H})$ by $T_n(s)= T(s)^n$. There are a few interesting questions that we address for the map $T_n$.
\begin{enumerate}
\item If $T$ is a positive definite function, then is $T_n$ positive definite too?
\item  Can we say that $T_n$ is a unitary representation if $T$ is so?	
	\end{enumerate} 
In this Section, we show that the map $T_n$ need not inherit these properties. We further identify some classes of operator-valued functions for which the above questions have an affirmative answer. If $T$ is an operator-valued function on a group $G$ such that $T(s^{-1})=T(s)^*$ for every $s \in G,$ then we have that
	\[
	T_n(s^{-1})=T(s^{-1})^n=T(s)^{*n}=T_n(s)^*
	\]
	for every $s \in G.$ Consequently, for a positive definite function $T$ on a group $G$, the operator-valued function is positive definite if and only if 
	\[
	\underset{s,t \in G}{\sum}\langle T_n(s^{-1}t)h(t),h(s)\rangle \geq 0, 
	\]
for every $h \in c_{00}(G, \mathcal{H}).$ For a group $G$, Proposition \ref{+ arbitrary} implies that $T_n$ is a positive definite function on $G$ if and only if the associated block matrix 
\begin{equation}\label{Hadamard}
	\begin{split} 
		\Delta_{T_n}(s_1, \dotsc, s_m)
&=\begin{bmatrix}
		T(s_i^{-1}s_j)^n
	\end{bmatrix}_{1 \leq i,j \leq m}\\
\end{split} 
\end{equation}
is a positive definite for every $\{s_1, \dotsc, s_m\}$ in $G$. In this connection, let us define \textit{block Hadamard product} of block matrices.
\begin{defn}
Let $A=[A_{ij}]$ and $B=[B_{ij}]$ be $p \times p$ block matrices in which each block is an operator. The \textit{block Hadamard product} $A\square B$ is defined as $A \square B:=[A_{ij}B_{ij}],$ where $A_{ij}B_{ij}$ denotes the usual composition of the operators $A_{ij}$ and $B_{ij}.$
\end{defn}
\begin{lem}\label{Hadamard arbitrary}
	Given a positive definite function $T$ on a group $G,$ the operator-valued function $T_n$ is positive definite if and only if 
	\[
\underbrace{\Delta_T(s_1, \dotsc, s_m) \square \dotsc \square  \Delta_T(s_1, \dotsc, s_m),
}_{n-times}
		\]
 is positive for every $\{s_1, \dotsc, s_m\} \subseteq G.$
\end{lem}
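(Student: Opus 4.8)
The plan is to unwind the definitions so that the claim becomes a direct restatement of the block-matrix criterion established just before the lemma. By Proposition \ref{+ arbitrary}, the function $T_n$ is positive definite on $G$ if and only if the block matrix $\Delta_{T_n}(s_1,\dotsc,s_m)$ is positive for every finite subset $\{s_1,\dotsc,s_m\}$ of $G$. As displayed in equation (\ref{Hadamard}), the $(i,j)$ block of $\Delta_{T_n}(s_1,\dotsc,s_m)$ is exactly $T(s_i^{-1}s_j)^n$, whereas the $(i,j)$ block of $\Delta_T(s_1,\dotsc,s_m)$ is $T(s_i^{-1}s_j)$. Thus the entire content of the lemma reduces to the purely algebraic identity that the $(i,j)$ block of the $n$-fold block Hadamard product equals the $n$-fold composition of the $(i,j)$ blocks.

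First I would record that identity. By the definition of the block Hadamard product, $A \square B$ has $(i,j)$ block $A_{ij}B_{ij}$, the ordinary operator composition. Iterating, an $n$-fold block Hadamard product of a single matrix $\Delta_T$ with itself has $(i,j)$ block equal to $T(s_i^{-1}s_j)^n$. I would make this precise by a one-line induction on $n$: the base case $n=1$ is trivial, and if the $(i,j)$ block of the $(n-1)$-fold product is $T(s_i^{-1}s_j)^{n-1}$, then forming one more block Hadamard product with $\Delta_T$ multiplies that block on the right by $T(s_i^{-1}s_j)$, giving $T(s_i^{-1}s_j)^{n}$. Here it is essential that all blocks under consideration are powers of the \emph{same} operator $T(s_i^{-1}s_j)$, so the compositions commute and genuinely assemble into the $n$th power with no ordering ambiguity.

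Combining these two observations finishes the proof: the $n$-fold block Hadamard product $\Delta_T(s_1,\dotsc,s_m)\square\dotsb\square\Delta_T(s_1,\dotsc,s_m)$ is, entry by entry, identical to the matrix $\Delta_{T_n}(s_1,\dotsc,s_m)$ of (\ref{Hadamard}). Hence the positivity of the former for every finite subset is equivalent to the positivity of the latter for every finite subset, which by Proposition \ref{+ arbitrary} is in turn equivalent to the positive definiteness of $T_n$. I would also note at the outset that since $T$ is positive definite we have $T(s^{-1})=T(s)^*$, so $T_n(s^{-1})=T_n(s)^*$ as computed in the paragraph preceding the lemma; this guarantees the symmetry hypothesis needed so that positive definiteness of $T_n$ is genuinely characterised by the nonnegativity of the associated quadratic form.

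There is no real obstacle here—the statement is essentially a bookkeeping identification of two ways of writing the same block matrix. The only point requiring a moment's care is making sure the block Hadamard product is interpreted as entrywise composition (not some tensor or Kronecker construction) and that, because each diagonal-argument block is a power of one fixed operator, the $n$-fold composition unambiguously yields the $n$th power; this is exactly why the lemma holds for arbitrary (not merely commuting) families of operators at this stage, the commutativity being automatic within each single block.
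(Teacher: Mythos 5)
Your proposal is correct and follows essentially the same route as the paper: both identify the matrix $\Delta_{T_n}(s_1,\dotsc,s_m)$ of (\ref{Hadamard}) entrywise with the $n$-fold block Hadamard product of $\Delta_T(s_1,\dotsc,s_m)$, then invoke Proposition \ref{+ arbitrary} together with the symmetry observation $T_n(s^{-1})=T_n(s)^*$ made just before the lemma. The only difference is that you spell out the trivial induction on $n$ for the entrywise identification, which the paper leaves implicit.
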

\begin{proof}
The block matrix in (\ref{Hadamard}) can be re-written as 
$$
		\Delta_{T_n}(s_1, \dotsc, s_m)= 	\underbrace{\Delta_T(s_1, \dotsc, s_m) \square  \dotsc \square  \Delta_T(s_1, \dotsc, s_m)
		}_{n-times}.
$$
The desired conclusion now follows directly from the above discussion.
\end{proof}

The next corollary is a direct consequence of Proposition \ref{+ finite} and Lemma \ref{Hadamard arbitrary}.
\begin{cor}\label{Hadamard finite}
	Given a positive definite function $T$ on a finite group $G,$ the map $T_n$ is positive definite if and only if 
	$
	\underbrace{\Delta_T(G) \square \dotsc \square  \Delta_T(G)
	}_{n-times}
	$
	is positive.
\end{cor}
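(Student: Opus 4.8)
The plan is to derive Corollary \ref{Hadamard finite} as an immediate specialization of Lemma \ref{Hadamard arbitrary} to the case of a finite group, using the earlier observation from Proposition \ref{+ finite} that positive definiteness on a finite group hinges on a single block matrix. First I would recall that, by Proposition \ref{+ finite}, a positive definite function on a finite group $G=\{s_1,\dotsc,s_m\}$ is governed entirely by the positivity of the one block matrix $\Delta_T(G)=[T(s_i^{-1}s_j)]_{1\le i,j\le m}$, rather than by an infinite family of such matrices as in the arbitrary case. The same reduction applies verbatim to the function $T_n$: since $T$ is positive definite we already know $T_n(s^{-1})=T_n(s)^*$, so by Proposition \ref{+ finite} applied to $T_n$, the map $T_n$ is positive definite if and only if the single block matrix $\Delta_{T_n}(G)=[T(s_i^{-1}s_j)^n]_{1\le i,j\le m}$ is positive.

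Next I would invoke the identity established inside the proof of Lemma \ref{Hadamard arbitrary}, namely that the block matrix $\Delta_{T_n}(s_1,\dotsc,s_m)$ with entries $T(s_i^{-1}s_j)^n$ equals the $n$-fold block Hadamard product of $\Delta_T(s_1,\dotsc,s_m)$ with itself. Specializing $\{s_1,\dotsc,s_m\}$ to be an enumeration of the full finite group $G$, this gives
\[
\Delta_{T_n}(G)=\underbrace{\Delta_T(G)\square\dotsc\square\Delta_T(G)}_{n\text{-times}}.
\]
Combining the two displayed equivalences then yields that $T_n$ is positive definite if and only if this $n$-fold block Hadamard product is positive, which is precisely the claim.

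Since this is a straightforward concatenation of two already-proved facts, there is no genuine obstacle; the only point requiring a word of care is that one must apply Proposition \ref{+ finite} to the function $T_n$ rather than to $T$, which is legitimate because $T_n$ satisfies the symmetry condition $T_n(s^{-1})=T_n(s)^*$ inherited from $T$ (as noted in the discussion preceding the definition of the block Hadamard product). Thus the corollary follows by specializing the finite set in Lemma \ref{Hadamard arbitrary} to the entire group and reading off the single relevant block matrix via Proposition \ref{+ finite}.
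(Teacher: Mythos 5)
Your proof is correct and matches the paper's approach exactly: the paper states this corollary as a direct consequence of Proposition \ref{+ finite} and Lemma \ref{Hadamard arbitrary}, which is precisely the concatenation you carry out (including the observation that $T_n$ inherits the symmetry condition $T_n(s^{-1})=T_n(s)^*$ from $T$, so Proposition \ref{+ finite} may be applied to it).
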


 The following example shows that $T_n$ need not be a positive definite function for any $n\geq 2$ even if $T$ is a positive definite function. 
\begin{eg}
	Consider the operator-valued function $T: \mathbb{Z}_2 \to \mathcal{B}(\mathbb{C}^2)$ given by 
	\[
	T(0)=\begin{bmatrix}
		2 & 0\\
		0 & 1
	\end{bmatrix}, \quad T(1)=\begin{bmatrix}
		-1 & -1\\
		-1 & 0
	\end{bmatrix}.
	\]
	Since $T(0)$ and $T(1)$ are self-adjoint, $T(s^{-1})=T(s)^*$ for every $s \in \mathbb{Z}_2.$ Consequently, Proposition \ref{+ finite} yields that $T$ is positive definite if and only if the block matrix
	\[
	\Delta_T=\begin{bmatrix}
		T(0) & T(1)\\
		T(1) & T(0)\\
	\end{bmatrix}
	=\begin{bmatrix}
		2 & 0 & -1 & -1\\
		0 & 1 & -1 & 0 \\
		-1 & -1 & 2 & 0\\
		-1 & 0 & 0 & 1\\
	\end{bmatrix}
	\]
	is positive. Since $\Delta_T$ is a self-adjoint matrix with eigenvalues $\{0, 2, 2 \pm \sqrt{2}\},$ we have that $\Delta_T$ is positive. Corollary \ref{Hadamard finite} implies that the map $T_n: \mathbb{Z}_2 \to \mathcal{B}(\mathcal{H}), s \mapsto T(s)^n$ if and only if the Hadamard product of $\Delta_T$ with itself $n$-times is positive. We have that 
	\[
	\Delta_{T_2}=\begin{bmatrix}
		T(0)^2 & T(1)^2\\
		T(1)^2 & T(0)^2\\
	\end{bmatrix}
	=\begin{bmatrix}
		4 & 0 & 2 & 1\\
		0 & 1 & 1 & 1 \\
		2 & 1 & 4 & 0\\
		1 & 1 & 0 & 1\\
	\end{bmatrix} \quad \mbox{and} \quad 	\Delta_{T_3}=\begin{bmatrix}
	T(0)^3 & T(1)^3\\
	T(1)^3 & T(0)^3\\
\end{bmatrix}
=\begin{bmatrix}
8 & 0 & -3 & -2\\
0 & 1 & -2 & -1 \\
-3 & -2 & 8 & 0\\
-2 & -1 & 0 & 1\\
\end{bmatrix}
	\]
	are self-adjoint matrices with $\det(\Delta_{T_2})=-11$ and $\det(\Delta_{T_3})=-288.$ Conseqeuntly, $T_2$ and $T_3$ are not positive definite functions. Infact, we show that there is no $n \geq 2$ for which $T_n$ is a positive definite function. We first compute $T(1)^n$ for which we define a recursive sequence
	\[
	a_1=1, a_2=2, a_n=a_{n-1}+a_{n-2} \quad n\geq 3,
	\]
and using induction argument, we have that 
\[
T(1)^n=(-1)^n\begin{bmatrix}
	a_n & a_{n-1}\\
	a_{n-1} & a_{n-2}\\ 
\end{bmatrix}.
\]	
For $n \geq 3,$ we have the following.
\[
\Delta_{T_n}=\begin{bmatrix}
	T(0)^n & T(1)^n\\
	T(1)^n & T(0)^n\\
\end{bmatrix}
=\begin{bmatrix}
	2^n & 0 & (-1)^na_n & (-1)^na_{n-1}\\
	0 & 1 & (-1)^na_{n-1} & (-1)^na_{n-2} \\
	(-1)^na_{n} & (-1)^na_{n-1} & 2^n & 0\\
	-(-1)^na_{n-1} & (-1)^na_{n-2} & 0 & 1\\
\end{bmatrix},
\]
whose determinant is given by
\[
\det(\Delta_{T_n})=(4^n-a_n^2)(1-a_{n-2}^2)-a_{n-1}^2(a_{n-1}^2+2a_na_{n-2}).
\]
For any $n \geq 3, a_{n-2} \geq 1$ and inductively, it follows that $a_n<2^n$ which yields that $(4^n-a_n^2)(1-a_{n-2}^2) \leq 0.$ Since $a_{n-1}^2(a_{n-1}^2+2a_na_{n-2})$ is strictly positive, we have that $\det(\Delta_{T_n})$ is strictly negative.
\qed
\end{eg}
Since a unitary representation on a group is a homomorphism, the image of an abelian group under a unitary representation consists of commuting unitaries. However the above example shows that the image of an abelian group under a positive definite function need not be consisting of commuting operators. Next, we study a class of positive definite functions $T$ for which $T_n$ is always positive definite for any $n \in \mathbb{N}$. We begin with the following result from the literature.
\begin{thm}[\cite{Gunther}, Corollary 3.3]\label{positive Hadamard}
Let $A=[A_{ij}]$ and $B=[B_{ij}]$ be $p \times p$ block matrices in which each block is an $n \times n$ matrix with complex enteries. If $A$ and $B$ are (strictly) positive definite such that every block of $A$ commutes with every block of $B$, then $A\square B$ is (strictly) positive definite.
\end{thm}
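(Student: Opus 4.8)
The plan is to establish a block analogue of the classical Schur product theorem, imitating the slick factorization proof of the scalar case, with the commutation hypothesis used precisely to move factors past the blocks of $A$. Since both $A$ and $A\square B$ are operators on $\mathbb{C}^{np}$ identified with $(\mathbb{C}^n)^p$, it suffices to prove $\langle (A\square B)x,x\rangle\geq 0$ for every $x=(x_1,\dots,x_p)$ with $x_i\in\mathbb{C}^n$, where $\langle (A\square B)x,x\rangle=\sum_{i,j}\langle A_{ij}B_{ij}x_j,x_i\rangle$.

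First I would factor $B$. As $B\geq 0$, set $C=B^{1/2}$, so $B=C^*C$ and hence $B_{ij}=\sum_{k}C_{ki}^*C_{kj}$, where $C=[C_{ki}]$ is the block form of the square root. Substituting this into the quadratic form and using the commutation relation $A_{ij}C_{ki}^*=C_{ki}^*A_{ij}$ to pull $C_{ki}^*$ to the left, each summand becomes $\langle A_{ij}(C_{kj}x_j),(C_{ki}x_i)\rangle$, so the whole form regroups as $\sum_{k}\langle A y^{(k)},y^{(k)}\rangle$ with $y^{(k)}=(C_{k1}x_1,\dots,C_{kp}x_p)$. Each term is nonnegative because $A\geq 0$, which yields $A\square B\geq 0$.

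The hard part will be justifying that every block $C_{ki}$ of $C=B^{1/2}$ commutes with every block $A_{ij}$ of $A$; this is where the hypothesis is genuinely used, since a priori we only control the raw blocks $B_{kl}$. I would argue that the set $\mathcal{D}$ of all $np\times np$ matrices whose every block commutes with every block of $A$ is a unital $*$-subalgebra of $M_{np}(\mathbb{C})$: it is plainly a linear subspace containing $I$; it is closed under products because the blocks of a product are sums of products of blocks, each of which commutes with the $A_{ij}$; and it is $*$-closed since, $A$ being self-adjoint ($A_{ij}=A_{ji}^*$), taking adjoints turns any relation $T_{ab}A_{cd}=A_{cd}T_{ab}$ into $A_{dc}T_{ab}^*=T_{ab}^*A_{dc}$, so $T^*$ again lies in $\mathcal{D}$. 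Being a finite-dimensional C*-algebra, $\mathcal{D}$ is closed under continuous functional calculus; since $B\in\mathcal{D}$ and $B\geq 0$, its square root $B^{1/2}$ lies in $\mathcal{D}$, giving exactly the required commutation of the blocks $C_{ki}$ with the blocks $A_{ij}$.

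Finally, for the strict statement I would sharpen the estimate. When $A$ is strictly positive, $\langle Ay^{(k)},y^{(k)}\rangle\geq\lambda_{\min}(A)\|y^{(k)}\|^2$ with $\lambda_{\min}(A)>0$, and summing over $k$ gives $\langle(A\square B)x,x\rangle\geq\lambda_{\min}(A)\sum_{i}\langle B_{ii}x_i,x_i\rangle$, using $\sum_k\|y^{(k)}\|^2=\sum_i\langle(\sum_k C_{ki}^*C_{ki})x_i,x_i\rangle=\sum_i\langle B_{ii}x_i,x_i\rangle$. Since each diagonal block $B_{ii}$ is a compression of the strictly positive $B$ and hence strictly positive, the right-hand side is strictly positive for $x\neq 0$, so $A\square B$ is strictly positive definite. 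I expect the algebraic bookkeeping in the regrouping step and the C*-algebra closure argument to be the only delicate points, while the remainder parallels the scalar Schur product theorem.
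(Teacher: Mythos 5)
This statement is quoted in the paper from the literature (G\"unther--Klotz, \emph{Schur's theorem for a block Hadamard product}, cited as Corollary 3.3 there); the paper itself gives no proof, so there is nothing internal to compare against. Your proposal is a correct, self-contained proof. The factorization $B=C^{2}$ with $C=B^{1/2}$, giving $B_{ij}=\sum_k C_{ki}^*C_{kj}$, and the regrouping of $\sum_{i,j,k}\langle A_{ij}C_{ki}^*C_{kj}x_j,x_i\rangle$ into $\sum_k\langle Ay^{(k)},y^{(k)}\rangle$ with $y^{(k)}=(C_{k1}x_1,\dots,C_{kp}x_p)$ is exactly the block analogue of the classical Schur-product argument, and the commutation hypothesis enters precisely where you say it does. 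The genuinely nontrivial point --- that the blocks of $B^{1/2}$, not just of $B$, commute with the blocks of $A$ --- is handled correctly by your algebra $\mathcal{D}$: it is a unital subalgebra (blocks of a product are sums of products of blocks), it is $*$-closed because $A_{cd}^*=A_{dc}$ for the self-adjoint matrix $A$, and being a finite-dimensional (hence closed) $*$-subalgebra of $M_{np}(\mathbb{C})$ it contains $B^{1/2}$, e.g.\ as a norm limit of polynomials in $B$. The strict case is also right: $\sum_k\|y^{(k)}\|^2=\sum_i\langle B_{ii}x_i,x_i\rangle$, and each diagonal block $B_{ii}$ is strictly positive as a compression of the strictly positive $B$, so the form is strictly positive for $x\neq 0$. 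One cosmetic remark: positivity of the quadratic form over $\mathbb{C}$ already forces $A\square B$ to be self-adjoint, or you can see it directly from $(A_{ij}B_{ij})^*=B_{ij}^*A_{ij}^*=B_{ji}A_{ji}=A_{ji}B_{ji}$ using the commutation hypothesis; either way no gap remains.
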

\begin{thm}
	If $T: G \to \mathcal{B}(\mathbb{C}^k)$ is a positive definite function on a group $G$ such that $\{T(s): s \in G\}$ is a family of commuting $k \times k$ matrices, then $T_n$ is a positive definite function for every $n \in \mathbb{N}.$
\end{thm}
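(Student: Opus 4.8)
The plan is to reduce the positive definiteness of $T_n$ to the positivity of iterated block Hadamard powers and then exploit the commutativity hypothesis through Theorem \ref{positive Hadamard}. By Lemma \ref{Hadamard arbitrary}, it suffices to show that for every finite subset $\{s_1, \dotsc, s_m\} \subseteq G$ the $n$-fold block Hadamard product
\[
\underbrace{\Delta_T(s_1, \dotsc, s_m) \square \dotsc \square \Delta_T(s_1, \dotsc, s_m)}_{n\text{-times}}
\]
is positive on $(\mathbb{C}^k)^m$. Writing $A=\Delta_T(s_1, \dotsc, s_m)=[\,T(s_i^{-1}s_j)\,]_{1\leq i,j\leq m}$, Proposition \ref{+ arbitrary} tells us that $A\geq 0$, and each block $T(s_i^{-1}s_j)$ belongs to the commuting family $\{T(s):s\in G\}$.

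The key structural observation I would record first is that the blocks of any Hadamard power of $A$ remain inside the commutative algebra generated by $\{T(s):s\in G\}$: indeed, the $(i,j)$ block of the $p$-fold product is exactly $T(s_i^{-1}s_j)^{p}$, a power of an element of the commuting family. Consequently, every block of any Hadamard power of $A$ commutes with every block of $A$ itself. This is precisely the commutativity hypothesis demanded by Theorem \ref{positive Hadamard}, and it is the one place where the assumption that $\{T(s)\}$ is a commuting family is essential.

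With this in hand I would argue by induction on $n$. The base case $n=1$ is just $A\geq 0$. For the inductive step, suppose the $(n-1)$-fold Hadamard power of $A$ is positive; its blocks are the matrices $T(s_i^{-1}s_j)^{n-1}$, each a $k\times k$ complex matrix commuting with every block $T(s_i^{-1}s_j)$ of $A$ by the previous paragraph. Applying Theorem \ref{positive Hadamard} to the two positive block matrices (the $(n-1)$-fold power and $A$) yields that the $n$-fold power is positive, completing the induction. Lemma \ref{Hadamard arbitrary} then delivers that $T_n$ is positive definite.

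The main obstacle to be careful about is the commutativity bookkeeping across the induction: Theorem \ref{positive Hadamard} requires every block of one factor to commute with every block of the other, and one must verify that this persists after each Hadamard multiplication rather than only for the original matrix $A$. This is guaranteed because all blocks encountered are powers of commuting matrices and hence stay in a single commutative algebra. I would also flag that the finite-dimensionality hypothesis $T:G\to\mathcal{B}(\mathbb{C}^k)$ is genuinely used, since Theorem \ref{positive Hadamard} is stated for block matrices whose entries are complex $k\times k$ matrices; this is exactly why the theorem is phrased for $\mathbb{C}^k$-valued (matrix-valued) positive definite functions.
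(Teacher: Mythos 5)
Your proposal is correct and follows essentially the same route as the paper: reduce via Lemma \ref{Hadamard arbitrary} to positivity of iterated block Hadamard powers of $\Delta_T(s_1,\dotsc,s_m)$, invoke Proposition \ref{+ arbitrary} for the base positivity, and apply Theorem \ref{positive Hadamard} inductively. If anything, you are more careful than the paper, which proves only the $T_2$ step explicitly and leaves the induction implicit, whereas you verify the needed commutativity of the blocks $T(s_i^{-1}s_j)^{n-1}$ with the blocks of $\Delta_T$ at each stage.
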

\begin{proof}
	It follows from Lemma \ref{Hadamard arbitrary} that $T_n$ is positive definite if and only if $\Delta_{T_n}(s_1, \dotsc, s_m)$ (which is $n$-times Hadamard product of $\Delta_T(s_1, \dotsc, s_m)$ with itself) is positive for every $\{s_1, \dotsc, s_m\} \subseteq G$. By Proposition \ref{+ arbitrary},
	$
		\Delta_T(s_1, \dotsc, s_m)=\begin{bmatrix}
T(s_{i}^{-1}s_j)
	\end{bmatrix}_{i, j=1}^{m}
	$
is positive. Since $\{T(s): s \in G\}$ is a family of commuting matrices, Theorem \ref{positive Hadamard} implies that $\Delta_T(s_1, \dotsc, s_m) \square \Delta_T(s_1, \dotsc, s_m)$ is positive for every $\{s_1, \dotsc, s_m\} \subseteq G$. Consequently, $T_2$ is positive definite. The desired conclusion follows from mathematical induction.
\end{proof}

The above result can be extended to commuting normal operators on a Hilbert space. 
\begin{thm}[\cite{Krishna}, Theorem 2.3]\label{Normal Hadamard} Let $\mathcal{A}$ be a commutative unital $C^*$-algebra. If $M, N \in M_n(\mathcal{A})$ are positive, then their Hadarmard product $M\square N$ is positive.
\end{thm}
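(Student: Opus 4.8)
The plan is to reduce the statement to the classical Schur product theorem for complex matrices by means of the Gelfand--Naimark representation of $\mathcal{A}$. Since $\mathcal{A}$ is a commutative unital $C^*$-algebra, there is a compact Hausdorff space $X$ (the maximal ideal space of $\mathcal{A}$) together with a $*$-isomorphism $\mathcal{A} \cong C(X)$. Applying this isomorphism entrywise yields a $*$-isomorphism $M_n(\mathcal{A}) \cong M_n(C(X)) \cong C(X, M_n(\mathbb{C}))$, under which a matrix $M=[M_{ij}] \in M_n(\mathcal{A})$ corresponds to the continuous matrix-valued function $x \mapsto M(x) := [M_{ij}(x)] \in M_n(\mathbb{C})$.

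The key structural fact I would establish first is the \emph{pointwise positivity criterion}: for $M \in M_n(\mathcal{A}) \cong C(X, M_n(\mathbb{C}))$, one has $M \geq 0$ if and only if $M(x) \geq 0$ in $M_n(\mathbb{C})$ for every $x \in X$. One direction is immediate, since for each $x$ the evaluation map $\mathrm{ev}_x \colon M_n(\mathcal{A}) \to M_n(\mathbb{C})$, $M \mapsto M(x)$, is a $*$-homomorphism and $*$-homomorphisms preserve positivity. For the converse, if $M(x) \geq 0$ for all $x$, then $M$ is self-adjoint and its spectrum in $C(X, M_n(\mathbb{C}))$ equals $\bigcup_{x \in X} \sigma(M(x)) \subseteq [0,\infty)$ (invertibility in $C(X,M_n(\mathbb{C}))$ is pointwise invertibility, the inverse being automatically continuous on the compact space $X$); equivalently, continuous functional calculus supplies a continuous square root $M^{1/2} \in C(X, M_n(\mathbb{C}))$ with $M = (M^{1/2})^* M^{1/2} \geq 0$.

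Next I would observe that the block Hadamard product commutes with evaluation. Since $\mathcal{A}$ is commutative and the $(i,j)$ entry of $M \square N$ is the product $M_{ij} N_{ij}$ in $\mathcal{A}$, evaluating at $x$ gives $(M \square N)(x) = [M_{ij}(x) N_{ij}(x)] = M(x) \square N(x)$, where the right-hand side is the ordinary Hadamard product of the complex matrices $M(x)$ and $N(x)$. Commutativity of $\mathcal{A}$ is precisely what lets the entrywise products behave like scalar Hadamard products after evaluation.

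Putting these pieces together gives the result. Assuming $M, N \geq 0$ in $M_n(\mathcal{A})$, the pointwise criterion gives $M(x) \geq 0$ and $N(x) \geq 0$ in $M_n(\mathbb{C})$ for every $x \in X$. The classical Schur product theorem then yields $M(x) \square N(x) \geq 0$, i.e. $(M \square N)(x) \geq 0$, for each $x$. Applying the pointwise criterion once more, in the reverse direction, we conclude $M \square N \geq 0$. The main obstacle is the pointwise positivity criterion; everything else is formal once the Gelfand--Naimark identification is in place.
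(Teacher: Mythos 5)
Your proof is correct, but note that there is nothing in the paper itself to compare it against: the paper imports this statement verbatim from \cite{Krishna} and uses it as a black box, giving no proof of it. Relative to the cited source, your route is genuinely different. The proof there is, in essence, the classical algebraic one transplanted to $\mathcal{A}$: write $M=B^*B$ and $N=C^*C$ in $M_n(\mathcal{A})$, so that $M_{ij}=\sum_k B_{ki}^*B_{kj}$ and $N_{ij}=\sum_l C_{li}^*C_{lj}$, then use commutativity of $\mathcal{A}$ to regroup $(M\square N)_{ij}=M_{ij}N_{ij}=\sum_{k,l}(B_{ki}C_{li})^*(B_{kj}C_{lj})$, exhibiting $M\square N$ as a finite sum of Gram matrices, hence positive. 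You instead reduce to the scalar Schur product theorem via Gelfand--Naimark, the identification $M_n(\mathcal{A})\cong C(X,M_n(\mathbb{C}))$, and the pointwise positivity criterion; your justification of that criterion is sound (evaluations are $*$-homomorphisms in one direction; pointwise invertibility on the compact space $X$, or equivalently the continuous pointwise square root, in the other). Both proofs hinge on commutativity --- yours to get $\mathcal{A}\cong C(X)$ at the very first step, the algebraic one at the single step where $B_{kj}$ is commuted past $C_{li}^*$ --- and commutativity is not removable, since the entrywise product of positive operator matrices fails to be positive in general; this is exactly why Theorem \ref{positive Hadamard}, quoted from \cite{Gunther}, must assume that the blocks of $A$ commute with the blocks of $B$. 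What your approach buys is conceptual clarity (positivity over a commutative $C^*$-algebra is a pointwise matter); what the algebraic approach buys is that it avoids Gelfand theory entirely and isolates the precise point at which commutativity enters.
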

\begin{thm}
	If $N: G \to \mathcal{B}(\mathcal{H})$ is a positive definite function acting on a group $G$ such that $\{N(s): s \in G\}$ is a family of commuting normal operators, then $N_k$ is a positive definite function for every $k \in \mathbb{N}.$
\end{thm}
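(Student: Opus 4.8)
The plan is to mimic the proof of the preceding theorem, simply replacing the appeal to Theorem \ref{positive Hadamard} by Theorem \ref{Normal Hadamard}. By Lemma \ref{Hadamard arbitrary}, it suffices to show that for every finite subset $\{s_1, \dotsc, s_m\}$ of $G$, the $k$-fold block Hadamard product of $\Delta_N(s_1, \dotsc, s_m)$ with itself is a positive operator on $\mathcal{H}^m$; this product is exactly $\Delta_{N_k}(s_1, \dotsc, s_m)=[N(s_i^{-1}s_j)^k]_{1\leq i,j\leq m}$, as recorded in the discussion around (\ref{Hadamard}).

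First I would package the entries of $\Delta_N$ into a commutative $C^*$-algebra. Since $N$ is positive definite, $N(s)^*=N(s^{-1})$, so the family $\{N(s):s\in G\}$ is closed under taking adjoints; being in addition a commuting family of normal operators, the unital $*$-algebra it generates is commutative, and its norm closure $\mathcal{A}$ is a commutative unital $C^*$-subalgebra of $\mathcal{B}(\mathcal{H})$. Each entry $N(s_i^{-1}s_j)$ of $\Delta_N(s_1, \dotsc, s_m)$ lies in $\mathcal{A}$, so $\Delta_N(s_1, \dotsc, s_m)\in M_m(\mathcal{A})$; by Proposition \ref{+ arbitrary} it is a positive operator on $\mathcal{H}^m$. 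Since $M_m(\mathcal{A})$ sits as a unital $C^*$-subalgebra of $\mathcal{B}(\mathcal{H}^m)$, positivity in $M_m(\mathcal{A})$ coincides with operator positivity on $\mathcal{H}^m$ (positivity is preserved under inclusion of unital $C^*$-subalgebras), so $\Delta_N(s_1, \dotsc, s_m)$ is positive as an element of $M_m(\mathcal{A})$.

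Next I would run the induction. Because $\mathcal{A}$ is commutative, the block Hadamard product $\square$ agrees with the entrywise product in $M_m(\mathcal{A})$, so Theorem \ref{Normal Hadamard} applies directly: the Hadamard product of two positive elements of $M_m(\mathcal{A})$ is again positive. Hence $\Delta_N(s_1, \dotsc, s_m)\square \Delta_N(s_1, \dotsc, s_m)$ is positive in $M_m(\mathcal{A})$, and inductively the $k$-fold product $\Delta_{N_k}(s_1, \dotsc, s_m)$ is positive for every $k\in\mathbb{N}$. Transferring this back to operator positivity on $\mathcal{H}^m$ and invoking Lemma \ref{Hadamard arbitrary} then yields that $N_k$ is a positive definite function for every $k\in\mathbb{N}$.

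The only genuinely delicate point I expect is the reduction to a commutative $C^*$-algebra. One must use the identity $N(s)^*=N(s^{-1})$ to see that the family is self-adjoint—normality of the individual operators alone does not force $N(s)$ to commute with $N(t)^*$—and then observe that a commuting, adjoint-closed set of normal operators generates a commutative $C^*$-algebra, so that Theorem \ref{Normal Hadamard} becomes applicable. Once this setting is in place, the remaining induction is routine and identical to the matrix case.
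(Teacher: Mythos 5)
Your proposal is correct and follows essentially the same route as the paper: reduce via Lemma \ref{Hadamard arbitrary} to positivity of the $k$-fold block Hadamard product, place all entries in the commutative unital $C^*$-algebra generated by $\{N(s): s \in G\}$, apply Theorem \ref{Normal Hadamard} to get positivity of the two-fold product, and finish by induction. If anything, you are more careful than the paper at the one delicate point: the paper asserts without comment that the unital $*$-algebra generated by commuting normal operators is commutative (which requires either Fuglede's theorem or, as you observe, the adjoint-closure of the family coming from $N(s)^*=N(s^{-1})$), and it also glosses over taking the norm closure and transferring positivity between $M_m(\mathcal{A})$ and $\mathcal{B}(\mathcal{H}^m)$, all of which your write-up handles explicitly.
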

\begin{proof}
	Let $\mathcal{A}$ be the unital  $*$-algebra generated by $\{I_\mathcal{H}, N(s): s \in G\}$. Since $\{N(s): s\in G\}$ is a family of commuting normal operators, $\mathcal{A}$ is a commutative unital $C^*$-algebra. By Lemma \ref{Hadamard arbitrary}, $N_k$ is positive definite if and only if $\Delta_N (s_1, \dotsc, s_m)\square \dotsc \square \Delta_N (s_1, \dotsc, s_m)$ ( $k$-times) is positive for every $\{s_1, \dotsc, s_m\} \subseteq G.$ Since
$ \Delta_N(s_1, \dotsc, s_m)=\begin{bmatrix}
		N(s_{i}^{-1}s_j)
	\end{bmatrix}_{i, j=1}^ m
	$
	is positive and each entry in $\Delta_{N_k}$ is in $\mathcal{A}$, it follows from Theorem \ref{Normal Hadamard} that $\Delta_N(s_1, \dotsc, s_m) \square \Delta_N(s_1, \dotsc, s_m)$ is positive for every $\{s_1, \dotsc, s_m\} \subseteq G$. Thus, $N_2$ is positive definite. The desired conclusion follows from induction.
\end{proof}

 We now characterize unitary representations $U$ for which $U_n ( n \in \mathbb N)$ is a unitary representation too. 
\begin{lem}\label{Unitary_Power-Dilation}
	Let $U\colon G\to \mathcal{B}(\mathcal{H})$ be a unitary representation acting on a group $G$ and let $n \in \mathbb{N}.$ Then $U_{n}\colon G\to \mathcal{B}(\mathcal{H}) , s \mapsto U(s)^n$ is a unitary representation if and only if $U|_{G_n}=I_\HS$ where, $G_n=\langle s^{-n+1} (ts)^{n-1} t^{-n+1} \ | \ s,t\in G\rangle.$
\end{lem}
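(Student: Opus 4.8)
The plan is to observe that, because $U$ is a unitary representation, the map $U_n$ automatically takes unitary values (each $U(s)^n$ is a product of unitaries, hence unitary) and satisfies $U_n(e)=U(e)^n=I_\HS$; moreover $U_n(s)=U(s)^n=U(s^n)$ since $U$ is a homomorphism. Consequently the only property that can fail is multiplicativity, so $U_n$ is a unitary representation if and only if $U_n(st)=U_n(s)U_n(t)$ for all $s,t\in G$, that is, $U((st)^n)=U(s^nt^n)$ for every $s,t$. The whole argument then reduces to showing that this family of identities is equivalent to $U$ vanishing on the prescribed generators of $G_n$.

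First I would record the elementary word identity $(st)^n=s(ts)^{n-1}t$, which is immediate by induction on $n$. Substituting this and using that $U$ is a homomorphism, the condition $U((st)^n)=U(s^nt^n)$ becomes $U(s)U((ts)^{n-1})U(t)=U(s)^nU(t)^n$. Cancelling $U(s)$ on the left and $U(t)$ on the right (both invertible, being unitary) turns this into $U((ts)^{n-1})=U(s)^{n-1}U(t)^{n-1}=U(s^{n-1})U(t^{n-1})$. Multiplying by $U(s^{-n+1})$ on the left and $U(t^{-n+1})$ on the right and collapsing via the homomorphism property yields exactly $U\big(s^{-n+1}(ts)^{n-1}t^{-n+1}\big)=I_\HS$. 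Thus $U_n$ is multiplicative if and only if $U(g(s,t))=I_\HS$ for every $s,t\in G$, where $g(s,t)=s^{-n+1}(ts)^{n-1}t^{-n+1}$ is precisely the generating word of $G_n$; crucially, every implication in this chain is reversible.

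To finish I would pass from the generators to the subgroup they generate. Since each $g(s,t)$ lies in $G_n$, the hypothesis $U|_{G_n}=I_\HS$ forces $U(g(s,t))=I_\HS$ for all $s,t$. Conversely, if $U(g(s,t))=I_\HS$ for all $s,t$, then because $U$ is a homomorphism into the unitaries it sends every inverse $g(s,t)^{-1}$ and every finite product of the $g(s,t)^{\pm1}$ to $I_\HS$; as such products exhaust $G_n=\langle g(s,t):s,t\in G\rangle$, we conclude $U|_{G_n}=I_\HS$. The degenerate case $n=1$ is consistent with this: there $g(s,t)=e$, so $G_1=\{e\}$ and $U_1=U$ is trivially a representation.

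The argument is essentially formal manipulation inside $G$ and inside the unitary group, so I do not expect a deep obstacle. The only places demanding care are the word identity $(st)^n=s(ts)^{n-1}t$ and the precise bookkeeping that makes the rearranged relation land on the generator $s^{-n+1}(ts)^{n-1}t^{-n+1}$ itself rather than on some conjugate or inverse of it; keeping the left/right multiplications and the exponents $n-1$ versus $n$ straight is the main thing to watch.
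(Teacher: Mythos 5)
Your proposal is correct and follows essentially the same route as the paper: both reduce to checking multiplicativity, use the identity $(st)^n = s(ts)^{n-1}t$ (the paper performs it at the operator level), cancel invertible unitaries to reach $U\bigl(s^{-n+1}(ts)^{n-1}t^{-n+1}\bigr)=I_\HS$, and then pass from generators to the subgroup $G_n$ via the homomorphism property. Your write-up is somewhat more explicit than the paper's on the final generators-to-subgroup step, which the paper dispatches with a single sentence, but the argument is the same.
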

	\begin{proof}
		Given a unitary representation $U$ on a group $G$ and $n \in \mathbb{N},$ we have that $U_n(s)$ is unitary for every $s \in G$ and $U_n(e)=I_\HS.$ Therefore, $U_n$ is unitary representation if and only if $U_n$ is a group homomorphism. Since $U$  is a homormophism, we have that 
		\begin{equation*}
			\begin{split}
			U_n(st)=U_n(s)U_n(t) & \iff U(st)^n=U(s)^nU(t)^n\\
			& \iff \left( U(s)U(t)\right)^n=U(s)^nU(t)^n\\
			& \iff 	\left( U(t)U(s)\right)^{n-1}=U(s)^{n-1}U(t)^{n-1}\\
& \iff 	 U(ts)^{n-1}=U(s)^{n-1}U(t)^{n-1}\\
& \iff U(s^{-n+1}(ts)^{n-1}t^{-n+1})=I_\HS,
			\end{split}
		\end{equation*}
for every $s, t \in G.$  The rest follows from the fact that $U$ is a group homomorphism.
	\end{proof}
\begin{cor}\label{Abelian_UnitaryII} Let $U\colon G\to \mathcal{B}(\mathcal{H})$ be a
	unitary reprsentation. Then $U_2 $ is a unitary representation if and only if $\{U(s) : s\in G \}$ is a family of commuting operators.
	\begin{proof}
	By Lemma \ref{Unitary_Power-Dilation}, $U_2$ is a unitary representation if and only $U(s^{-1}tst^{-1})=I_\HS$ for every $s, t \in G$. Since $U$ is a unitary representation, the latter condition holds if and only $U(s)U(t)=U(t)U(s)$ for every $s, t \in G$ . 
	\end{proof}
\end{cor}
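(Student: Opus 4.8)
The plan is to specialize Lemma~\ref{Unitary_Power-Dilation} to the case $n=2$ and then unwind the resulting condition $U|_{G_2}=I_\HS$ into the statement that $\{U(s):s\in G\}$ is a commuting family. Since Lemma~\ref{Unitary_Power-Dilation} already records that $U_n(e)=I_\HS$ and that each $U_n(s)$ is unitary, nothing beyond the homomorphism property needs separate checking.

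First I would compute the generating set of $G_n$ for $n=2$. Substituting $n=2$ into the defining word $s^{-n+1}(ts)^{n-1}t^{-n+1}$ yields $s^{-1}(ts)t^{-1}=s^{-1}tst^{-1}$, so that $G_2=\langle\, s^{-1}tst^{-1} \mid s,t\in G\,\rangle$; writing $s^{-1}tst^{-1}=[s^{-1},t]$ one recognizes this as a generating set of the commutator subgroup $[G,G]$, which connects the present corollary with Corollary~\ref{Abelian_Unitary}. By Lemma~\ref{Unitary_Power-Dilation}, $U_2$ is therefore a unitary representation if and only if $U(s^{-1}tst^{-1})=I_\HS$ for all $s,t\in G$.

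Second, I would exploit that $U$ is a group homomorphism into the unitary group, so that $U(s^{-1}tst^{-1})=U(s)^{*}U(t)U(s)U(t)^{*}$. Multiplying this identity on the left by the unitary $U(s)$ and on the right by the unitary $U(t)$, both of which are invertible, shows that $U(s^{-1}tst^{-1})=I_\HS$ holds if and only if $U(t)U(s)=U(s)U(t)$. Quantifying this equivalence over all pairs $s,t\in G$ then produces exactly the asserted characterization.

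I do not anticipate a genuine obstacle: the substantive work is already packaged in Lemma~\ref{Unitary_Power-Dilation}, and the only point requiring a moment's care is the elementary conjugation manipulation that converts the relation $U(s^{-1}tst^{-1})=I_\HS$ into the commutation identity $U(s)U(t)=U(t)U(s)$. If anything, the mild subtlety is simply to keep track of which factors are unitaries so that the left- and right-multiplication steps are reversible in both directions of the equivalence.
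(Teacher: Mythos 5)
Your proposal is correct and follows essentially the same route as the paper: specialize Lemma \ref{Unitary_Power-Dilation} to $n=2$ to reduce the question to $U(s^{-1}tst^{-1})=I_\HS$ for all $s,t\in G$, then use that $U$ is a homomorphism into the unitaries to convert this into the commutation identity $U(s)U(t)=U(t)U(s)$. The only difference is that you spell out the conjugation computation $U(s^{-1}tst^{-1})=U(s)^{*}U(t)U(s)U(t)^{*}$ and note the identification $G_2=[G,G]$ explicitly, which the paper leaves implicit.
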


Next, we show that $U_n$ need not be a unitary representation even if $U$ is a unitary representation.
\begin{eg}\label{S_3 unitary}
	Let $\{e_1, e_2, e_3\}$ be the standard orthonormal basis of $\mathbb{C}^3$ over $\mathbb{C}$ and let $S_3$ be the symmetry group on a set of three elements. Consider the permutation representation 
	\[
	U:S_3 \to \mathcal{B}(\mathbb{C}^3) \quad \text{defined as} \quad  U(\sigma)(e_i)=e_{\sigma(i)},
	\] 
whose image consists of permutation matrices. Consequently, $U$ is a unitary representation but the image does not consist of commuting operators. Corollary \ref{Abelian_UnitaryII} implies that $U_2$ is not a unitary representation on $S_3.$
\qed 
\end{eg}

 The above example also shows that the image of a group under unitary representation need not consist of commuting unitaries. In the next example, we show that even if the image of a positive definite function $T$ on a group $G$ consists of commuting operators, there is a unitary representation $U$ on $G$ which does not consist of commuting operators but $T(s)=P_\HS U(s)|_\HS$ for every $s \in G.$
\begin{eg}
On the group $S_3=\{(1), (12), (13), (23), (123), (132)\},$
define an operator-valued function $T: S_3 \to \mathbb{C}$ as
\[
T(\sigma)=\begin{cases}
	1 & {\text{ if }} \sigma \ \mbox{fixes} \ 1 \\
	0 &  \mbox{otherwise}.
\end{cases}
\]	
It is easy to see that $T(\sigma^{-1})=T(\sigma)^*$ for every $\sigma \in S_3.$ For any $h \in c_{00}(S_3, \mathbb{C})$, we have 
\[
\underset{\sigma, \tau \in S_3}{\sum}\langle T(\sigma^{-1}\tau)h(\tau), h(\sigma)\rangle=|z_1|^2+2Rez_1\overline{z}_2+|z_2|^2=|z_1+z_2|^2\geq 0,  
\]	
where $z_1$ and $z_2$ are the images of the elements $(1)$ and $(23)$ under $h$ respectively. Consequently, $T$ is a positive definite function.	Define an isometry $V: \mathbb{C} \to \mathbb{C}^3$ by $Vz=(z, 0, 0)$. Let $U$ be the permutation unitary representation on $S_3$ as seen in Example \ref{S_3 unitary} for which we have that $T(s)=V^*U(s)V$ for every $s \in S_3$ but the image of $U$ does not consist of commuting operators.
\qed 
\end{eg}

\begin{rem}\label{U_nT_n}
	Let $T\colon G\to \mathcal{B}(\mathcal{H})$ be a positive definite function with
	$T(e)=I_\HS$ and $U\colon G\to \mathcal{B}(\mathcal{K})$ be a unitary representation such that $T(s)= P_\mathcal{H} U(s)|_\mathcal{H}$ for every $s \in G.$ In general, $T_n$ need not be a positive definite function and $U_n$ need not be a unitary representation for a given $n \in \mathbb{N}$ but if assume that such phenomenon occurs for some $n \in \mathbb{N},$ then a natural question is if $T_n(s)= P_\mathcal{H} U_n(s)|_\mathcal{H}$ for every $s \in G.$ The following example shows that this is not true in general.
\end{rem}
\begin{eg}
	Consider the operator-valued function $T:\mathbb{Z}_2 \to \mathcal{B}(\mathbb{C}^2)$ given by 
	\[
	T(0)=\begin{bmatrix}
		1 & 0\\
		0 &1\\
	\end{bmatrix}, \quad 
T(1)=\begin{bmatrix}
	0 & 1\slash 2\\
	1\slash 2 & 0 \\
\end{bmatrix}.
	\]
It is easy to see that $T(s^{-1})=T(s)^*$ for every $s \in \mathbb{Z}_2.$ Since $T(1)$ is a self-adjoint contraction, Corollary \ref{+ on Z_2II} implies that $T$ is positive definite. Similarly, $T_2$ is positive definite too. It follows from Theorem \ref{NeumarkII} that there is a unitary representation $U$ on $\mathbb{Z}_2$ such that $T(s)= P_\mathcal{H} U(s)|_\mathcal{H}$ for every $s \in \mathbb{Z}_2$. Since the image of $U$ consists of two unitaries and one of them is identity, therefore, $U$ consists of commuting unitaries. Corollary \ref{Abelian_UnitaryII} implies that $U_2$ is also a unitary representation on $\mathbb{Z}_2.$ Let if possible, $T(s)= P_\mathcal{H} U(s)|_\mathcal{H}$ for every $s \in \mathbb{Z}_2.$ Then
\begin{equation*}
	\begin{split}
		T(1)^2=T_2(1)=P_\mathcal{H}U_2(1)|_\mathcal{H}=P_\mathcal{H}U(1)^2|_\mathcal{H}=P_\mathcal{H}U(0)|_ \mathcal{H}=T(0),
	\end{split}
\end{equation*}
which is a contradiction. \qed
\end{eg}

We conlcude this article by providing a necessary and sufficient condition for which $T_n(s)=P_\HS U_n(s)|_\HS$ under the hypothesis given in Remark \ref{U_nT_n}.
\begin{prop} Let $T\colon G\to \mathcal{B}(\mathcal{H})$ be a positive definite function with
	$T(e)=I$ and $U\colon G\to \mathcal{B}(\mathcal{K})$ be a unitary representation such that $T(s)= P_\mathcal{H} U(s)|_\mathcal{H}$ for every $s \in G.$ Assume that for some  $n \in \mathbb{N}, T_n$ is a positive definite function and $U_n$ is a unitary representation. Then $T_n(s)= P_\mathcal{H} U_n(s)|_\mathcal{H}$ for every $s \in G$ if
	and only if $T(s^n)=T(s)^n$ for every $s\in G.$
\end{prop}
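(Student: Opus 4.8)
The plan is to unwind both definitions and then exploit the single structural difference between $T$ and $U$: the representation $U$ is a group homomorphism, whereas $T$ is merely positive definite. First I would record the one multiplicative identity we are allowed to use. Since $U$ is a unitary representation, it respects the group law, so for every $s \in G$ and every $n \in \mathbb{N}$ we have $U(s^n)=U(\underbrace{s\cdots s}_{n})=U(s)\cdots U(s)=U(s)^n$. By the definition of $U_n$ this says precisely $U_n(s)=U(s^n)$.

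Next I would compress this identity down to $\mathcal{H}$. The dilation hypothesis $T(g)=P_\mathcal{H}U(g)|_\mathcal{H}$ holds for \emph{all} group elements $g$, so applying it to the particular element $g=s^n$ gives
\[
P_\mathcal{H}U_n(s)|_\mathcal{H}=P_\mathcal{H}U(s^n)|_\mathcal{H}=T(s^n).
\]
On the other hand, by the definition of $T_n$ we simply have $T_n(s)=T(s)^n$. Combining the two computations, for each fixed $s$ the desired relation $T_n(s)=P_\mathcal{H}U_n(s)|_\mathcal{H}$ is \emph{literally} the equation $T(s)^n=T(s^n)$. Quantifying over $s\in G$ then yields the asserted equivalence. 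The extra hypotheses that $T_n$ be positive definite and $U_n$ be a unitary representation play no role in this chain of equalities; they merely ensure that $T_n$ and $U_n$ lie in the correct categories, so that asking whether $T_n$ is the compression of $U_n$ is a meaningful question in the first place.

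I do not anticipate a genuine obstacle here: the entire content is the contrast between $U(s)^n=U(s^n)$, valid because $U$ is multiplicative, and the generally false $T(s)^n=T(s^n)$, which can fail precisely because $T$ need not be a homomorphism. The only point requiring care is to apply the dilation formula $T(g)=P_\mathcal{H}U(g)|_\mathcal{H}$ to the element $s^n$ rather than to $s$, and to keep the quantity $P_\mathcal{H}U_n(s)|_\mathcal{H}=T(s^n)$ carefully distinct from the a priori unrelated quantity $T_n(s)=T(s)^n$.
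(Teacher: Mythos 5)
Your proof is correct and follows essentially the same route as the paper's: both reduce the claim, via the definitions $T_n(s)=T(s)^n$, $U_n(s)=U(s)^n$, the homomorphism identity $U(s)^n=U(s^n)$, and the dilation hypothesis applied to the element $s^n$, to the literal equation $T(s^n)=T(s)^n$. Your observation that the hypotheses on $T_n$ and $U_n$ play no role in the chain of equalities is likewise consistent with the paper's argument, which never invokes them.
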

	\begin{proof}
		Let $s \in G$. Since $T(s)= P_\mathcal{H} U(s)|_\mathcal{H}$, we have
		\begin{equation*}
			\begin{split}
				P_{\mathcal{H}}U_{n}(s)\vert_{\mathcal{H}}=T_{n}(s) 
				& \iff    P_{\mathcal{H}}U(s)^{n}\vert_{\mathcal{H}}=T(s)^{n} \\
				& \iff P_{\mathcal{H}}U(s^n)\vert_{\mathcal{H}}=T(s)^{n}\\
				&\iff
				T(s^n)=T(s)^{n}.\\
			\end{split}
		\end{equation*}
		The proof is complete.
	\end{proof}

\vspace{0.3cm}

	\end{document}